\documentclass[11pt,letterpaper,twoside,reqno,nosumlimits]{amsart}

\allowdisplaybreaks

%!TEX root = products_of_random_matrices-v2.tex
%=================================================
% Packages
%=================================================

\synctex=1

%\usepackage{fixltx2e} % Makes \( \) equation style robust, among other
                      % things. Must be first package

\usepackage[usenames,dvipsnames]{xcolor}
\usepackage{fancyhdr}
\usepackage{amsmath,amsfonts,amsbsy,amsgen,amscd,amssymb,amsthm}
\usepackage{url}

\usepackage{mathtools}

%\mathtoolsset{showonlyrefs}

\usepackage[font=small,margin=0.25in,labelfont={bf},labelsep={colon}]{caption}
%labelsep={space}

\usepackage{subcaption}

\usepackage{tikz}
\usepackage{microtype}
\usepackage{enumitem}

\definecolor{dark-gray}{gray}{0.3}
\definecolor{dkgray}{rgb}{.4,.4,.4}
\definecolor{dkblue}{rgb}{0,0,.5}
\definecolor{medblue}{rgb}{0,0,.75}
\definecolor{rust}{rgb}{0.5,0.1,0.1}

\usepackage[colorlinks=true]{hyperref}

\hypersetup{urlcolor=rust}
\hypersetup{citecolor=dkblue}
\hypersetup{linkcolor=dkblue}

\usepackage{setspace}
%\doublespace

\usepackage{graphicx}
\usepackage{booktabs,longtable,tabu} % Nice tables
\setlength{\tabulinesep}{1mm}
\usepackage{multirow} % More control over tables

\usepackage{float}

\usepackage[full]{textcomp}

\usepackage[scaled=.98,sups]{XCharter}% lining figures in math, osf in text
\usepackage[scaled=1.04,varqu,varl]{inconsolata}% inconsolata typewriter
\usepackage[type1]{cabin}% sans serif
\usepackage[charter,vvarbb,scaled=1.07]{newtxmath}
\usepackage[cal=boondoxo]{mathalfa}
\linespread{1.04}

\usepackage[T1]{fontenc}

%\usepackage{textcomp}
%     \usepackage[osf,scaled=.95]{heuristica} % osf for text, not math
%     \usepackage{cabin} % sans serif
%     \usepackage[varqu,varl]{inconsolata} % sans serif typewriter
%     \usepackage[utopia,bigdelims,vvarbb]{newtxmath} % bb from STIX
%     \usepackage[cal=boondoxo]{mathalfa} % mathcal

%
%\usepackage{fourier}
%\usepackage{charter}

%\usepackage[sc]{mathpazo}
%\usepackage{tgpagella}
%\linespread{1.05}

%\usepackage{tgpagella}

%\usepackage{kmath,kerkis}

%\usepackage[garamond]{mathdesign}
%\usepackage[charter]{mathdesign}
%\usepackage{fourier}
%\usepackage{times}

%\usepackage{garamondx}
%\usepackage[garamondx,cmbraces]{newtxmath}

%\newcommand\hmmax{0} % Default is 3.
%\newcommand\bmmax{0} % Default is 4.

\usepackage{bm} % boldmath must be called after the package

\graphicspath{{figures/}}

%=================================================
% Formatting
%=================================================

%\sloppy % Helps with margin justification

% \theoremstyle{theorem}

\newtheorem{theorem}{Theorem}[section]
\newtheorem{lemma}[theorem]{Lemma}

\theoremstyle{definition}

\newtheorem{example}[theorem]{Example}
\newtheorem{remark}[theorem]{Remark}

%\newenvironment{example}[1][]{% 
%  \refstepcounter{theorem}%
%  \smallskip\par\textsc{Example~\thetheorem}. \textit{#1.}}%
%{\smallskip\par}
%\newenvironment{remark}{\stepcounter{theorem}\par\textit{Remark~\thetheorem.}}{\par}

%%% Further font changes

%%% Equation numbering
\numberwithin{equation}{section} 
\numberwithin{figure}{section}
\numberwithin{table}{section}

\floatstyle{plaintop}
\newfloat{recipe}{thp}{lor}
\floatname{recipe}{Recipe}
\numberwithin{recipe}{section}

%%% Typesetting
\providecommand{\mathbold}[1]{\bm{#1}}  % Must be after 'fourier'
                                % package loads
                                
 % "Quad-Text-Quad"

%%% Annotations

%=================================================
% Symbols
%=================================================

%%% Old symbols with new names

\renewcommand{\phi}{\varphi}

%%% New symbols

 % Definition-equals
 % Equals-definition

%%% Constants
 
\newcommand{\econst}{\mathrm{e}}

 % All ones vector
 % Zero vector

\newcommand{\Id}{\mathbf{I}}

%%% Sets

\providecommand{\mathbbm}{\mathbb} % In case we don't load bbm

\newcommand{\R}{\mathbbm{R}}

\newcommand{\N}{\mathbbm{N}}
\newcommand{\Z}{\mathbbm{Z}}

% Set operations

%%% Real and complex analysis

\newcommand{\diff}[1]{\mathrm{d}{#1}}

 % Conflicts w/SIAM styles?

%%% Probability & measure

\newcommand{\Prob}[1]{\mathbbm{P}\left\{{#1}\right\}}

\newcommand{\Expect}{\operatorname{\mathbbm{E}}}

\DeclareMathOperator{\Var}{Var}

%%% Vector and matrix operators

\newcommand{\mtx}[1]{\mathbold{#1}}

\newcommand{\trace}{\operatorname{tr}}

%\newcommand{\supp}[1]{\operatorname{supp}(#1)}

 % Projection operator

%%% Semidefinite orders

%%% Mensuration: inner products and norms

% TeX does not like either \newcommand or \renewcommand for these
% two macros.  There is probably a good reason not to use them via
% \def, but I don't know it.  
%\newcommand{\<}{\langle} 
%\newcommand{\>}{\rangle}

% Fixed-size inner products and norms are useful sometimes

% Specific norms that are used frequently

%\newcommand{\snorm}[1]{\norm{#1}_*}

\newcommand{\triplenorm}[1]{{\left\vert\kern-0.25ex\left\vert\kern-0.25ex\left\vert #1
    \right\vert\kern-0.25ex\right\vert\kern-0.25ex\right\vert}}

% Special cones

 % Normal cone

 % Set of nonempty, closed convex cones

 % acronym for SubGradient Distance

% MACROS ML

% MACROS DA

% MACROS JAT

% MACROS JNW

\newcommand{\E}{\Expect}

%================
% Convexity & duality
%================

 % Indicator function for probability

%\newcommand{\pInd}{\mathbf{1}} % Indicator function for probability
 % Indicator function on a convex set

% Please don't change the following lines without a good reason -Mike
%%% Local Variables:
%%% mode: latex
%%% TeX-master: "final"
%%% End:

\usepackage[ruled]{algorithm2e}
%%%%% Page setup

\evensidemargin=0in
\oddsidemargin=0in
\textwidth=6.5in
\topmargin=-0.32in
\headheight=0.25in
\textheight=9in
\usepackage[numbers]{natbib}

\newcommand{\la}{\langle}
\newcommand{\ra}{\rangle}

\mathtoolsset{showonlyrefs}

\begin{document}

\title[non-asymptotic estimates for transition matrices]{Non-asymptotic Estimates for Markov Transition Matrices\\via Spectral gap methods}

\author[D. Huang, X. Li]{De Huang$^1$, Xiangyuan Li$^2$}
\thanks{$^1$School of Mathematical Sciences, Peking University. E-mail: dhuang@math.pku.edu.cn}
\thanks{$^2$School of Mathematical Sciences, Peking University. E-mail: lixiangyuan23@stu.pku.edu.cn}

\begin{abstract}
We establish non-asymptotic error bounds for the classical Maximal Likelihood Estimation of the transition matrix of a given Markov chain. Meanwhile, in the reversible case, we propose a new reversibility-preserving online Symmetric Counting Estimation of the transition matrix with non-asymptotic deviation bounds. Our analysis is based on a convergence study of certain Markov chains on the length-2 path spaces induced by the original Markov chain.
 \end{abstract}
 \maketitle

\section{Introduction}
Markov models are fundamental in various fields for their efficiency in modeling time-dependent scenarios where the future state depends solely on the present state. The inverse problem of estimating the transition matrix of a given Markov chain is one of the most fundamental tasks in many scientific applications~\cite{mostel2020statistical,craig2002estimation,inamura2006estimating,trendelkamp2015estimation,macdonald2021fitting}.

Let $\Omega$ be a finite state space, and let $\mtx{P}=[p(u,v)]_{u,v\in \Omega}$ be an irreducible transition matrix with an invariant distribution $\mu$. Our goal is to estimate the transition probability $p(u,v)$ for all pairs of states when $\mtx{P}$ is unknown. Suppose we have access to a finite Markov sequence $\{u_k\}_{k=1}^{n+1}$ generated by $\mtx{P}$. An extensively studied method, known as the Maximal Likelihood Estimation (MLE), simply counts the number of jumps from $u$ to $v$ for all $u,v\in \Omega$ and uses the empirical mean of the indicator function $\mathbb{1}_{(u,v)}(\cdot,\cdot)$ to approximate $\mu(u)p(u,v)$:   
\begin{equation}\label{eqt:formula of MLE}
    r_n(u,v):=\frac{1}{n}\sum\limits_{k=1}^{n}\mathbb{1}_{(u,v)}\left(u_k,u_{k+1}\right)\approx \mu(u)p(u,v).
\end{equation}
Then $\mu(u)$ and $p(u,v)$ can be approximated by $\sum_{w\in \Omega} r_n(u,w)$ and $ r_n(u,v)/\sum_{w\in \Omega} r_n(u,w) $, respectively. We will use $\mtx{R}_n=[r_n(u,v)]_{u,v\in \Omega}$ to denote the matrix form of MLE. It is shown in~\cite{anderson1957statistical,billingsley1961statistical} that the error of MLE is asymptotically $\mathcal{O}(1/\sqrt{n})$ by the central limiting theorem for Markov chains. In contrast, our goal is to establish non-asymptotic results that are valid for any $n\in\mathbb{N}_+$.

Our first contribution is to establish non-asymptotic error bounds of MLE  in terms of the iterated Poincar\'e gap (IP gap) of $\mtx{P}$, which was first introduced in \cite{chatterjee2025spectral} as a generalization of the spectral gap for non-reversible Markov chains. We call it the iterated Poincar\'e gap, following the notations in \cite{huang2026bernstein}, to emphasize its definition by an iterated Poincar\'e inequality (see Subsection \ref{subsec:Fundamentals} below). To achieve this, we interpret MLE as the Markov empirical mean of the indicator function $\mathbb{1}_{(u,v)}(\cdot,\cdot)$ corresponding to some transition matrix $\mtx{P}_2$ defined on the length-2 path space $\Omega^2$. We then establish the relation between the spectral gaps of $\mtx{P}_2$ and $\mtx{P}$, and apply concentration inequalities for Markov chains to derive the following entry-wise tail bound for the error of MLE.

\begin{theorem}\label{thm:MLEsimple version}
    For any $t>0$ and any $u,v\in\Omega$, 
    \[\mathbb{P}\left\{|r_n(u,v)-\mu(u)p(u,v)|\geq t\right\}\leq C_1\|\nu/\mu\|_{\infty}\exp\left(-C_2\eta_p(\mtx{P})nt^2\right),\]
    where $C_1$, $C_2$ are absolute constants, $\nu$ is the initial distribution of the Markov sequence, and $\eta_p(\mtx{P})$ is the IP gap of $\mtx{P}$.
\end{theorem}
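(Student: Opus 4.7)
The plan is to reinterpret $r_n(u,v)$ as an empirical mean of an indicator observable along an auxiliary Markov chain on the pair space $\Omega^2$, and then invoke a Bernstein-type concentration inequality driven by the iterated Poincar\'e gap. Setting $x_k := (u_k, u_{k+1})$ for $k = 1,\dots,n$, the sequence $\{x_k\}$ is a Markov chain on $\Omega^2$ with transition kernel
\[
\mtx{P}_2[(a,b),(c,d)] = \mathbb{1}_{b=c}\, p(c,d),
\]
whose unique invariant distribution is $\pi_2(u,v) = \mu(u)p(u,v)$. With $f := \mathbb{1}_{(u,v)}$, which is $\{0,1\}$-valued with $\mathbb{E}_{\pi_2} f = \mu(u)p(u,v)$, one has $r_n(u,v) = \tfrac{1}{n}\sum_{k=1}^{n} f(x_k)$, so the quantity to be bounded is a bona fide Markov empirical mean.

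I would then apply the IP-gap Bernstein inequality for Markov empirical means (as developed in \cite{huang2024bernsteintypeinequalitiesmarkovchains}) to the pair chain, which directly produces a bound of the shape
\[
\mathbb{P}\{|r_n(u,v) - \mu(u)p(u,v)| \geq t\} \leq C_1 \|\nu_2/\pi_2\|_\infty \exp\bigl(-C_2\, \eta_p(\mtx{P}_2)\, n t^2\bigr),
\]
where $\nu_2$ denotes the law of $x_1 = (u_1,u_2)$. Since $u_1 \sim \nu$ and $u_2 \mid u_1 \sim p(u_1,\cdot)$, one has $\nu_2(u,v) = \nu(u)p(u,v) = (\nu/\mu)(u)\,\pi_2(u,v)$, so $\|\nu_2/\pi_2\|_\infty = \|\nu/\mu\|_\infty$ on the support of $\pi_2$. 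This immediately turns the initial-distribution factor into the one stated in the theorem.

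The remaining, and most delicate, step is a gap comparison of the form $\eta_p(\mtx{P}_2) \geq c\, \eta_p(\mtx{P})$ for an absolute constant $c>0$. This is the main obstacle, because $\mtx{P}_2$ is genuinely non-reversible regardless of whether $\mtx{P}$ is, and its invariant measure lives on the edges of $\Omega^2$, so Poincar\'e-type inequalities for $\mtx{P}$ cannot be transplanted verbatim. My approach would be, for any test function $g\colon \Omega^2 \to \mathbb{R}$, to invoke the conditional variance identity
\[
\Var_{\pi_2}(g) = \Var_\mu(\bar g) + \mathbb{E}_{u\sim\mu}\bigl[\Var_{v\sim p(u,\cdot)} g(u,v)\bigr],
\qquad \bar g(u) := \sum_{v\in\Omega} p(u,v)\, g(u,v),
\]
then control the first summand by applying the iterated Poincar\'e inequality for $\mtx{P}$ to the univariate function $\bar g$, and observe that the second summand is completely dissipated after a single step of $\mtx{P}_2$, since one step of the pair chain resamples the second coordinate according to $p$. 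Iterating this decomposition relates the iterated Dirichlet form of $\mtx{P}_2$ to that of $\mtx{P}$ with a universal constant, which yields the required spectral comparison. Combining this comparison with the two preceding steps delivers the stated entry-wise tail bound with absolute constants $C_1, C_2$.
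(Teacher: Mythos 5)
Your overall architecture is exactly the paper's: reinterpret $r_n(u,v)$ as the empirical mean of $\mathbb{1}_{(u,v)}$ along the pair chain with kernel $\mtx{P}_2$ and invariant law $\mu_2(u,v)=\mu(u)p(u,v)$, apply the IP-gap Bernstein inequality of \cite{huang2024bernsteintypeinequalitiesmarkovchains} on $\Omega^2$, compute $\|\nu_2/\mu_2\|_\infty=\|\nu/\mu\|_\infty$, and finish with a comparison $\eta_p(\mtx{P}_2)\gtrsim\eta_p(\mtx{P})$ (the paper proves $\eta_p(\mtx{P}_2)\geq\eta_p(\mtx{P})/(1+\eta_p(\mtx{P}))$, which since $\eta_p(\mtx{P})\leq 2$ gives the absolute-constant version you need). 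The only place you diverge is in how you prove the gap comparison, and your sketch there has one imprecise claim. The paper's argument is a direct triangle-inequality computation: for $h\in L^0_{2,\mu_2}$ set $f=(\mtx{\Id}-\mtx{P}_2)h$, observe $\mtx{P}_2h(u,v)=h_1(v)$ with $h_1=\bar h$ your conditional average, note that averaging $f(u,\cdot)$ over $v\sim p(u,\cdot)$ yields $(\mtx{\Id}-\mtx{P})h_1(u)$, apply the IP gap of $\mtx{P}$ plus Jensen to get $\|h_1\|_\mu\leq\|f\|_{\mu_2}/\eta_p(\mtx{P})$, and conclude via $\|f\|_{\mu_2}\geq\|h\|_{\mu_2}-\|h_1\|_{\mu_2}$. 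Your first summand $\Var_\mu(\bar g)$ is handled by precisely this mechanism. However, your claim that the second summand $\mathbb{E}_{u\sim\mu}[\Var_{v\sim p(u,\cdot)}g(u,v)]$ is ``completely dissipated after a single step'' is not literally true: $\mtx{P}_2 g(u,v)=\bar g(v)$ still depends on the \emph{second} coordinate of the current pair, so its conditional variance given the first coordinate does not vanish, and the conditional variance term equals $\mathbb{E}_{\mu_2}[(g(u,v)-\bar g(u))^2]$ whereas the Dirichlet quantity is $\mathbb{E}_{\mu_2}[(g(u,v)-\bar g(v))^2]$ --- note $\bar g(u)$ versus $\bar g(v)$. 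The repair is routine but necessary: for instance, bound $\Var_{v|u}(g(u,\cdot))\leq 2\,\mathbb{E}_{v|u}[(g(u,v)-\bar g(v))^2]+2\,\Var_{v|u}(\bar g(v))$ and reabsorb $\mathbb{E}_u[\Var_{v|u}(\bar g(v))]\leq\Var_\mu(\bar g)$ into the first summand, which costs only an absolute constant and still delivers $\eta_p(\mtx{P}_2)\geq c\,\eta_p(\mtx{P})$. With that step made precise, your proof goes through; the paper's triangle-inequality route is simply shorter and gives the explicit constant $\eta_p/(1+\eta_p)$.
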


There have been many related works that also focus on learning and testing Markov transition matrices \cite{hao2018learning,wolfer2021statistical,chan2021learning,wolfer2024empirical}. Most relevantly, \cite{wolfer2021statistical} controls the probabilistic sample complexity of MLE at the matrix level (in terms of the total variation norm). We will discuss their results in detail in Section~\ref{sec:main results}. We want to emphasize that, compared with prior work, the novelty of our result lies in that it only requires a nonzero IP gap of $\mtx{P}$ (which proves to be more robust than other generalized spectral gaps for non-reversible Markov chains; see \cite{chatterjee2025spectral,huang2026bernstein}), owing to a recently established Bernstein-type inequality \cite{huang2026bernstein} by the same authors. We will show that $\eta_p(\mtx{P})>0$ holds for any irreducible transition matrix $\mtx{P}$. Hence, Theorem~\ref{thm:MLEsimple version} applies to all Markov chains with irreducible transition matrices, highlighting the robustness of our result.

In numerous applications, the Markov chain considered is reversible~\cite{grone2008interlacing}, i.e. the transition matrix satisfies the detailed balance condition. In this case, it is important that the estimator preserves the reversibility of $\mtx{P}$ ~\cite{annis2010estimation,trendelkamp2015estimation,macdonald2021fitting}. However, the theories proposed in the literature do not provide any non-asymptotic convergence rate guarantee for those estimators.

Our second contribution is to propose an online reversibility-preserving Symmetric Counting Estimation (SCE) method as follows in the reversible case. We construct a reversible Markov chain $\{\tilde{\mtx{u}}_k\}_{k\geq 1}$ on the symmetrized length-2 path space $\tilde{\Omega}^2$ and use its empirical mean under some matrix-valued function $\tilde{\mtx{F}}$ to approximate $\mtx{D}_\mu\mtx{P}$:
\[ \mtx{H}_n:=\frac{1}{n}\sum\limits_{k=1}^n\tilde{\mtx{F}}(\tilde{\mtx{u}}_k)\approx \mtx{D}_\mu\mtx{P}.\]
Let $\tilde{\mtx{P}}_2$ denote the corresponding transition matrix on $\tilde{\Omega}^2$. A non-asymptotic error bound of SCE in the matrix form can be obtained by exploiting the spectral properties of $\tilde{\mtx{P}}_2$ and applying concentration inequalities for Markov dependent random matrices recently established in~\cite{neeman2024concentration}.
\begin{theorem}\label{thm:SCEsimple version}
    For any $t>0$, 
    \[\mathbb{P}\left\{\left\|\mtx{H}_n-\mtx{D}_{\mu}\mtx{P}\right\|\geq t\right\}\leq C_1d^{2-\pi/4}\|\nu/\mu\|_{\infty}\exp\left(-C_2\eta(\mtx{P})nt^2\right),\]
    where $d=|\Omega|$, $C_1$, $C_2$ are absolute constants, $\nu$ is some initial distribution on $\Omega$, and $\eta(\mtx{P})$ is the spectral gap of $\mtx{P}$.
\end{theorem}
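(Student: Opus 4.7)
The proof parallels the strategy of Theorem~\ref{thm:MLEsimple version}, but now handles matrix-valued empirical means rather than scalar ones. My plan unfolds in three stages.

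First, I would study the structure of the auxiliary chain $\{\tilde{\mtx{u}}_k\}$ on the symmetrized path space $\tilde{\Omega}^2$. Since $\mtx{P}$ is reversible, the lifted chain $\tilde{\mtx{P}}_2$ should inherit reversibility under a stationary distribution $\tilde{\pi}$ determined by the symmetrization rule (on unordered pairs $\{u,v\}$, essentially proportional to $\mu(u)p(u,v)$, with the diagonal $u=v$ treated separately). I would then verify directly that $\mathbb{E}_{\tilde{\pi}}[\tilde{\mtx{F}}(\tilde{\mtx{u}})]=\mtx{D}_\mu\mtx{P}$, which is the correctness requirement for SCE. This step is mostly bookkeeping but frames the problem in precisely the form demanded by the matrix concentration tool I plan to invoke.

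Second, the main technical obstacle is a spectral gap comparison of the form $\eta(\tilde{\mtx{P}}_2)\gtrsim \eta(\mtx{P})$. I expect this to come from a Dirichlet-form estimate, or more cleanly from a lifting argument that realizes eigenfunctions of $\tilde{\mtx{P}}_2$ from those of $\mtx{P}$ via the canonical projection from $\tilde{\Omega}^2$ to $\Omega$. Because the chain lives on unordered pairs rather than ordered ones, one has to track the symmetrization carefully and distinguish diagonal states from off-diagonal ones; this is where I expect most of the analytical effort to go. It is analogous to, but somewhat more delicate than, the $\mtx{P}_2$-versus-$\mtx{P}$ comparison underlying the MLE proof, because reversibility now has to be preserved throughout.

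Third, with the spectral gap comparison in hand and with $\tilde{\mtx{F}}$ uniformly bounded in operator norm (it takes values in symmetrized rank-one matrices of the form $\tfrac{1}{2}(\vct{e}_u\vct{e}_v^{\trsp}+\vct{e}_v\vct{e}_u^{\trsp})$, whose spectral norm is at most $1$), I would apply the matrix concentration inequality for reversible Markov-dependent random matrices from~\cite{neeman2023concentration}. That inequality delivers a tail bound of exactly the form $C_1\,d^{2-\pi/4}\,\|\tilde{\nu}/\tilde{\pi}\|_\infty\exp(-C_2\,\eta(\tilde{\mtx{P}}_2)\,nt^2)$; the exponent $2-\pi/4$ on the dimension factor is the signature of that result, and the norm ratio is the standard Radon--Nikodym correction accounting for a non-stationary start. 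Finally, since the initial distribution $\tilde{\nu}$ of the lifted chain on $\tilde{\Omega}^2$ is determined by $\nu$ together with a single $\mtx{P}$-transition, a direct computation bounds $\|\tilde{\nu}/\tilde{\pi}\|_\infty$ by a constant multiple of $\|\nu/\mu\|_\infty$. Combining this with the spectral gap estimate from step two, and absorbing absolute constants into $C_1,C_2$, gives the claimed bound.
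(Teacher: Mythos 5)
Your overall strategy coincides with the paper's: lift to the symmetrized path space $\tilde{\Omega}^2$, verify reversibility of $\tilde{\mtx{P}}_2$ and unbiasedness of $\tilde{\mtx{F}}$, compare spectral gaps, and invoke the matrix Bernstein inequality of \cite{neeman2023concentration}. Your first and third stages are essentially what the paper does (it in fact computes $\|\tilde{\nu}_2/\tilde{\mu}_2\|_\infty=\|\nu/\mu\|_\infty$ exactly, since the density ratio at an unordered pair is the average of the two single-state ratios).

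The gap is in your second stage. The concentration inequality you plan to invoke requires a nonzero \emph{absolute} spectral gap $\eta_a(\tilde{\mtx{P}}_2)$, i.e.\ control of the second largest eigenvalue \emph{in absolute value}, whereas a Dirichlet-form comparison or an eigenfunction-lifting argument of the kind you sketch only bounds $1-\lambda_2(\tilde{\mtx{P}}_2)$ and says nothing about eigenvalues near $-1$. As written, your plan yields $\eta(\tilde{\mtx{P}}_2)\gtrsim\eta(\mtx{P})$ but not $\eta_a(\tilde{\mtx{P}}_2)\gtrsim\eta(\mtx{P})$, so the application of the matrix inequality does not go through. The paper closes this by a factorization rather than a comparison: writing $\tilde{\mtx{P}}_2=\tilde{\mtx{S}}\tilde{\mtx{T}}$, where $\tilde{\mtx{S}}$ selects a uniformly random endpoint of the unordered pair and $\tilde{\mtx{T}}$ appends a $\mtx{P}$-step, one checks $\tilde{\mtx{T}}\tilde{\mtx{S}}=(\mtx{P}+\mtx{\Id})/2$; since $\tilde{\mtx{S}}\tilde{\mtx{T}}$ and $\tilde{\mtx{T}}\tilde{\mtx{S}}$ share the same nonzero eigenvalues, the nonzero spectrum of $\tilde{\mtx{P}}_2$ equals that of the lazy chain $(\mtx{P}+\mtx{\Id})/2$ and is therefore contained in $[0,1]$, giving the exact identity $\eta_a(\tilde{\mtx{P}}_2)=\eta(\tilde{\mtx{P}}_2)=\eta(\mtx{P})/2$. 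Note also that the ``canonical projection'' from $\tilde{\Omega}^2$ to $\Omega$ you allude to is not well defined on unordered pairs except as this randomized endpoint choice, and composing it with a $\mtx{P}$-step produces $(\mtx{P}+\mtx{\Id})/2$, not $\mtx{P}$; the factor $1/2$ in the gap is forced by this laziness. Your outline is repaired by replacing the Dirichlet-form step with this factorization, or by otherwise proving that $\tilde{\mtx{P}}_2$ has no negative eigenvalues.
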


Non-asymptotic expected error bounds are sometimes more favorable in applications. A simple way to bound the error expectation is by integrating the corresponding tail bound, which can be crude since the tail bound is in general not sharp for all scales of deviations. For example, if we integrate the Bernstein-type tail bounds above to control the expected errors for MLE and SCE, the resulting bounds will typically depend on the dimension $d$, which is in fact a technical overestimation. Instead, we derive dimension-free expectation bounds of both estimators via a direct calculation by utilizing the special structure $\mtx{P}_2$ and $\tilde{\mtx{P}}_2$.
\begin{theorem}\label{thm:dimension free error bound}
    It holds that
    \[\E\left[\left\|\mtx{R}_n-\mtx{D}_{\mu}\mtx{P}\right\|_{\mathrm{F}}\right]=\mathcal{O}\left(\frac{1}{\sqrt{n\eta_p(\mtx{P})}}\right),\]
    where $\eta_p(\mtx{P})$ is the IP gap of $\mtx{P}$. Moreover, when $\mtx{P}$ is reversible,
    \[\E\left[\left\|\mtx{R}_n-\mtx{D}_{\mu}\mtx{P}\right\|_{\mathrm{F}}\right]=\mathcal{O}\left(\frac{1}{\sqrt{n\eta(\mtx{P})}}\right),\quad \E\left[\left\|\mtx{H}_n-\mtx{D}_{\mu}\mtx{P}\right\|_{\mathrm{F}}\right]=\mathcal{O}\left(\frac{1}{\sqrt{n\eta(\mtx{P})}}\right).\]
\end{theorem}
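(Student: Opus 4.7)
The plan is to reduce the expected Frobenius norm to an entrywise second‑moment estimate via Jensen's inequality, then exploit the path‑space interpretation of both estimators and a telescoping identity to kill the naive $d^2$ factor. Concretely, I start from
\begin{equation}
\E\bigl[\fnorm{\mtx{R}_n-\mtx{D}_\mu\mtx{P}}\bigr]\;\le\;\sqrt{\E\bigl[\fnorm{\mtx{R}_n-\mtx{D}_\mu\mtx{P}}^2\bigr]}\;=\;\sqrt{\sum_{u,v\in\Omega}\E\bigl[(r_n(u,v)-\mu(u)p(u,v))^2\bigr]},
\end{equation}
so it suffices to control the sum of the mean‑squared errors of the scalar coordinates. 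Each $r_n(u,v)$ is, by construction, the empirical mean along the length‑$2$ path chain $\mtx{P}_2$ of the indicator $\mathbb{1}_{(u,v)}$, whose mean under the invariant distribution $\mu_2(x,y)=\mu(x)p(x,y)$ of $\mtx{P}_2$ is exactly $\mu(u)p(u,v)$.

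For each coordinate I split the mean squared error into variance plus squared bias. The variance of a Markov empirical mean is controlled by a Kipnis–Varadhan style bound: in the reversible case one has $\Var(r_n(u,v))\lesssim \tfrac{1}{n\,\eta(\mtx{P}_2)}\Var_{\mu_2}(\mathbb{1}_{(u,v)})$ up to a multiplicative factor from $\|\nu/\mu\|_\infty$, and in the general case the same inequality holds with the IP gap $\eta_p(\mtx{P}_2)$, since the iterated Poincaré inequality is exactly designed to yield $L^2$ variance decay. At this point I invoke the relations $\eta(\mtx{P}_2)\gtrsim \eta(\mtx{P})$ in the reversible case and $\eta_p(\mtx{P}_2)\gtrsim \eta_p(\mtx{P})$ in general, which are part of the spectral machinery for $\mtx{P}_2$ already used in Theorem~\ref{thm:MLEsimple version}.

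The decisive step is summing over $(u,v)$. Using $\mathbb{1}_{(u,v)}^2=\mathbb{1}_{(u,v)}$, one has
\begin{equation}
\sum_{u,v\in\Omega}\Var_{\mu_2}(\mathbb{1}_{(u,v)})\;\le\;\sum_{u,v\in\Omega}\mu(u)p(u,v)\;=\;1,
\end{equation}
which replaces a trivial $|\Omega|^2$ bound by the universal constant $1$. This is the "special structure'' of $\mtx{P}_2$ alluded to in the introduction, and it is what makes the final estimate dimension‑free. Combining with the coordinate variance bound yields $\sum_{u,v}\Var(r_n(u,v))=\mathcal{O}(1/(n\,\eta_p(\mtx{P})))$, and the reversible version is identical with $\eta(\mtx{P})$ in place of $\eta_p(\mtx{P})$. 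For SCE the argument is parallel: $\mtx{H}_n$ is the empirical mean on the symmetrized path chain $\tilde{\mtx{P}}_2$ of the matrix‑valued function $\tilde{\mtx{F}}$, whose nonzero entries are again $\{0,1\}$‑indicators of single paths, so the sum $\sum\Var_{\tilde\mu_2}(\tilde{F}_{uv})$ collapses to a constant in the same way, and the spectral gap of $\tilde{\mtx{P}}_2$ is again controlled by $\eta(\mtx{P})$.

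The main obstacle I anticipate is the bias term $(\E[r_n(u,v)]-\mu(u)p(u,v))^2$ arising from the non‑stationary initial distribution $\nu$. A coordinatewise $\mathcal{O}(1/n)$ mixing estimate would lead, after squaring and summing, to an unwanted factor of $d$ unless one is careful. The right move is to bound this bias sum directly by $\tfrac{1}{n^2}\sum_{u,v}p(u,v)^2\bigl(\sum_k|\nu_k(u)-\mu(u)|\bigr)^2$, use $\sum_v p(u,v)^2\le 1$ to collapse one index, and then control the telescoping sum in $u$ by the mixing time (equivalently the gap), leaving a term of lower order than $1/(n\eta)$. Once this is done, Jensen delivers the stated $\mathcal{O}(1/\sqrt{n\eta_p(\mtx{P})})$ and $\mathcal{O}(1/\sqrt{n\eta(\mtx{P})})$ bounds uniformly in the dimension.
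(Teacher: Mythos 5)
Your overall strategy is sound and close in spirit to the paper's: the Jensen reduction to the second moment, the identification of $r_n(u,v)$ as a $\mtx{P}_2$-empirical mean, and above all the dimension-killing observation $\sum_{u,v}\Var_{\mu_2}(\mathbb{1}_{(u,v)})\le\sum_{u,v}\mu(u)p(u,v)=1$ are exactly the right ingredients. Where you diverge is in how the variance is bounded: you invoke a coordinate-wise Kipnis--Varadhan/Chatterjee-type bound $\Var(r_n(u,v))\lesssim \Var_{\mu_2}(\mathbb{1}_{(u,v)})/(n\eta_p(\mtx{P}_2))$ together with $\eta_p(\mtx{P}_2)\ge\eta_p(\mtx{P})/(1+\eta_p(\mtx{P}))$, whereas the paper works directly with the operator $\frac{2}{n^2}\sum_{k=1}^{n-1}(n-k)(\mtx{P}_2^k-\mtx{\Pi}_2)$, uses the factorization $\mtx{P}_2=\mtx{S}\mtx{T}$, $\mtx{P}=\mtx{T}\mtx{S}$ with $\mtx{S}_1^{\mathrm{T}}\mtx{S}_1=\mtx{T}_1\mtx{T}_1^{\mathrm{T}}=\mtx{\Id}$ to push everything down to a polynomial in $\mtx{P}$, and then bounds $\|\phi(\mtx{P})h\|_\mu$ by writing $h=(\mtx{\Id}-\mtx{P})g$ and telescoping. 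The paper explicitly notes in a remark that your route (applying Chatterjee's variance inequality to $\mtx{P}_2$ and then the gap comparison) is a valid alternative, so this part of your argument is acceptable, though you should be careful with one point: $\mtx{P}_2$ is \emph{not} reversible even when $\mtx{P}$ is, so in the reversible case you cannot appeal to ``$\eta(\mtx{P}_2)$'' or to the reversible Kipnis--Varadhan bound on the path space for $\mtx{R}_n$; the sharper reversible constant in the paper comes from a spectral calculus argument on $\phi(\mtx{P}-\mtx{\Pi})$ using the reversibility of $\mtx{P}$ itself (for $\mtx{H}_n$ the symmetrized chain $\tilde{\mtx{P}}_2$ is genuinely reversible and your argument goes through).

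The genuine gap is in your treatment of non-stationarity. You both attach a multiplicative $\|\nu/\mu\|_\infty$ to the variance \emph{and} introduce a separate bias term that you propose to control ``by the mixing time (equivalently the gap).'' These two are not equivalent here: the theorem must cover every irreducible $\mtx{P}$ with $\eta_p(\mtx{P})>0$, which includes periodic chains whose mixing time is infinite and for which $|\nu_k(u)-\mu(u)|$ does not decay at all term by term; only the Ces\`aro average $\frac{1}{n}\sum_k(\nu_k-\mu)$ decays, and bounding it requires the same IP-gap/pseudo-inverse machinery as the variance, not a mixing argument. The clean fix --- and what the paper actually does --- is to apply the change of measure \emph{before} any decomposition:
\begin{equation}
\E\left[\left\|\mtx{R}_n-\mtx{D}_\mu\mtx{P}\right\|_{\mathrm{F}}^2\,\middle|\,\mtx{u}_1\sim\nu_2\right]\le\|\nu_2/\mu_2\|_\infty\,\E\left[\left\|\mtx{R}_n-\mtx{D}_\mu\mtx{P}\right\|_{\mathrm{F}}^2\,\middle|\,\mtx{u}_1\sim\mu_2\right]=\|\nu/\mu\|_\infty\,\E_{\mu_2}\!\left[\left\|\mtx{R}_n-\mtx{D}_\mu\mtx{P}\right\|_{\mathrm{F}}^2\right],
\end{equation}
using $\nu_2(\mtx{w})/\mu_2(\mtx{w})=\nu(w_1)/\mu(w_1)$. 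Under $\mu_2$ the estimator is exactly unbiased (Lemma on unbiasedness), so the stationary second moment \emph{is} the variance and no bias term ever appears. Replace your variance-plus-bias split with this single step and the rest of your argument closes.
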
 
Our numerical results suggest that for each of the two methods, the mean square error (MSE) is independent of $d$, and that 1/MSE depends linearly on $n$ and the spectral gap, which justifies the optimality of our theoretical results.

The rest of this paper is organized as follows. In Section~\ref{sec:main results}, we develop some basic facts about Markov chains and state our main results which are complete versions of the theorems above. Section~\ref{sec:proofs} focuses on constructing the induced Markov chains on path spaces and exploring their properties. We then use these properties to prove our main results in Section~\ref{sec:error bounds}. The presentation of numerical results in Section~\ref{sec:numerical results} concludes the paper.

\section{Non-asymptotic estimates of Markov matrices}\label{sec:main results}
In this section, we outline the main results of this paper with proofs appearing in Section~\ref{sec:error bounds}, before which we develop some basic facts about Markov chains.
\subsection{Fundamentals}\label{subsec:Fundamentals}
\subsubsection{Notation}
Throughout this paper, matrices are written in uppercase bold letters. In particular, $\mtx{E}_{uv}$ denotes the matrix with $1$ at position $(u,v)$ and $0$ elsewhere, $\mtx{\Id}_d$ denotes the $d$-dimensional identity matrix, and $\mtx{1}_d$ denotes the d-dimensional all-ones vector. In cases where it does not cause ambiguity, we will omit the subscript $d$ to use $\bf{\Id}$ and $\bf{1}$ instead of $\Id_d$ and $\mtx{1}_d$.

For a real vector $\alpha\in\R^d$, we use $\|\alpha\|_{\infty}$ for the $L_{\infty}$ norm of $\alpha$ and $\mtx{D}_{\alpha}$ for the $d\times d$ diagonal matrix with the elements of $\alpha$ on the diagonal. For a matrix $\mtx{A}\in \R^{m\times n}$, we write $\|\mtx{A}\|$ for the $\ell_2$ operator norm, $\|\mtx{A}\|_{\mathrm{F}}$ for the Frobenius norm, $\mtx{A}(u,v)$ for the entry at position $(u,v)$, and $\mtx{A}^{\mathrm{T}}$ for the transpose. For $\mtx{A}\in \R^{d\times d}$, we write $\operatorname{spec}(\mtx{A})$ for the spectrum set and $\trace[\mtx{A}]$ for the trace.

Let $\mathbb{H}_d$ denote the linear space of all $d\times d$ real symmetric matrices and $\mathbb{H}_d^+$ denote the cone of all positive-semidefinite matrices. We use the symbol $\preceq$ for the semidefinite partial order on Hermitian matrices: for matrices $\mtx{A}, \mtx{B} \in \mathbb{H}_d$, the inequality $\mtx{A} \preceq \mtx{B}$ means that $\mtx{B}-\mtx{A} \in \mathbb{H}_d^{+}$. 

Let $\Omega$ be a finite state space equipped with a probability measure $\mu$, the number of states $|\Omega|$ is denoted by $d$. Any real-valued function on $\Omega$ can be seen as a finite-dimensional real vector and vice versa. Define $\mathbb{E}_\mu$ the expectation with respect to the measure $\mu$. When applied to a random matrix, $\mathbb{E}_{\mu}$ computes the entry-wise expectation. 
\subsubsection{Foundation of Markov chains}
Let $\{u_k\}_{k\geq 1}$ be a discrete-time Markov chain on a finite state space $\Omega$, and let $\mtx{P}=[p(u, v)]_{u,v \in \Omega}$ denote the corresponding transition matrix. Let $\mu$ be an invariant distribution of $\mtx{P}$ in the sense that
\begin{equation}\label{eqt:invariant distribution}
    \sum\limits_{u \in \Omega} \mu(u) p(u, v)=\mu(v), \quad \text { for all}\,\ v \in \Omega.
\end{equation}
Throughout this paper, we will assume that the transition matrix $\mtx{P}$ is irreducible, and thus its invariant distribution $\mu$ is unique. For a function $f:\Omega\rightarrow \mathbb{R}$, the action of $\mtx{P}$ as a linear operator on $f$ is defined as 
\[\mtx{P}f(u)=\mathbb{E}\left[f(u_2)|u_1=u\right]=\sum\limits_{v\in \Omega}p(u,v)f(v).\]
We say $\{u_k\}_{k\geq 1}$ is reversible with respect to $\mu$ if 
\[\mathbb{E}_{\mu}\left[g \cdot \mtx{P}f\right]=\mathbb{E}_{\mu}\left[f \cdot \mtx{P}g\right], \quad\text{for any}\,\ f,g:\Omega\rightarrow \R.\]
The reversibility of $\{u_k\}_{k\geq 1}$ is equivalent to the detailed balance condition of $\mtx{P}$:
\begin{equation}\label{eqt:detailed balance}
    \mu(u)p(u,v)=\mu(v)p(v,u),\quad \text{for any}\,\ u,v\in \Omega.
\end{equation}
In this case, we also say that $\mtx{P}$ is reversible.

Let $L_{2,\mu}$ be the Hilbert space of all real-valued functions on $\Omega$ endowed with the inner product 
\[\langle f,g \rangle_{\mu}=\mathbb{E}_{\mu}\left[f\cdot g\right],\]
and $L_{2,\mu}^0=\{f \in L_{2,\mu}: \E_\mu[f]=0\}$ be the mean-zero subspace of $L_{2,\mu}$. We will use $\|\cdot\|_{\mu}$ to denote both the $L_{2,\mu}$ norm of a vector and the $L_{2,\mu}$ operator norm of a matrix. 

In the reversible case, $\mtx{P}$ is self-adjoint under the $L_{2,\mu}$ inner product, and the \textit{spectral gap} of the transition matrix $\mtx{P}$ is given by  $\eta(\mtx{P}):=1-\lambda(\mtx{P})$, where
\[\lambda(\mtx{P}):=\sup _{h \in L_{2,\mu}^0,\ h \neq \mtx{0}} \frac{\la h,\mtx{P} h\ra_{\mu}}{\|h\|_{\mu}^2}.\]
In other words, $\eta(\mtx{P})$ is the gap between 1 and the second largest eigenvalue of $\mtx{P}$. 

When $\mtx{P}$ is non-reversible, we define the \textit{iterated Poincar\'e gap} (IP gap) of $\mtx{P}$ as 
\[\eta_p(\mtx{P}):=\inf _{h \in L_{2,\mu}^0,\ h \neq \mtx{0}} \frac{\|(\mtx{\Id}-\mtx{P}) h\|_{\mu}}{\|h\|_{\mu}}.\]
This definition coincides with that of the non-reversible spectral gap first formally introduced by Chatterjee in \cite{chatterjee2025spectral}, where he used this quantity to establish ergodicity for non-reversible Markov chains, generalizing similar results for reversible Markov chains based on the usual spectral gap. The reader may thus also refer to the IP gap as the Chatterjee gap or the non-reversible spectral gap and can find a more comprehensive introduction to the topic in \cite{chatterjee2025spectral}. This quantity was independently developed by \cite{huang2026bernstein} to establish Bernstein-type inequalities for Markov chains. In this paper, we choose to adopt the notation in \cite{huang2026bernstein} and call this quantity the iterated Poincar\'e gap, since it is related to the iterated Poincar\'e inequality:
\[\operatorname{Var}_\mu[h] \leq \eta_p(\mtx{P})^{-2} \mathbb{E}_\mu\left[((\mtx{\Id}-\mtx{P}) h)^2\right], \quad \text { for any } h \in L_{2, \mu} .\]

\subsection{Main results}
In this subsection, we outline the main results of this paper, which are complete versions of Theorem \ref{thm:MLEsimple version}, \ref{thm:SCEsimple version} and \ref{thm:dimension free error bound} in the introduction. Let $\{u_k\}_{k\geq 1}$ be a Markov chain on $\Omega$ and $\mtx{P}$ be the corresponding irreducible transition matrix we aim to estimate. The length-2 path space $\Omega^2$ is defined to be the Cartesian product of $\Omega$ with itself. The first contribution of the paper is to construct a Markov chain $\{\mtx{u}_k\}_{k\geq 1}$ on $\Omega^2$ induced by $\{u_k\}_{k\geq 1}$ (see Section \ref{sec:proofs}) and establish non-asymptotic error bounds of MLE by interpreting it as the empirical mean under some matrix-valued function $\mtx{F}$ defined on $\Omega^2$:
\[ \mtx{R}_n=\frac{1}{n}\sum\limits_{k=1}^n\mtx{F}(\mtx{u}_k).\]
 To be specific, in the following theorem we give entry-wise tail bounds and dimension-free expectation bounds for the error of MLE in terms of $n$ and the spectral gap of $\mtx{P}$.
\begin{theorem}\label{thm:main results of natural counting}
    Let $\{u_k\}_{k\geq 1}$ be a Markov chain on $\Omega$ generated by an irreducible transition matrix $\mtx{P}$ with an invariant distribution $\mu$, and let $\mtx{R}_n=[r_n(u,v)]_{u,v\in\Omega}$ be the matrix form of the corresponding MLE. 
    \begin{enumerate}
        \item \label{item:MLEmainresults convergence} $\mtx{R}_n$ converges to $\mtx{D}_\mu\mtx{P}$ almost surely.
        \item \label{item:MLEmainresults tail bound} Let $\nu$ be the initial distribution of $u_1$. For any $t>0$ and any $u,v\in \Omega$,
        \[\mathbb{P}\left\{\left|r_n(u,v)-\mu(u)p(u,v)\right|\geq t\right\}\leq 2\|\nu/\mu\|_{\infty}\exp\left(\frac{-n\, \gamma(\mtx{P})\, t^2}{4\sqrt{\left(2+6\gamma(\mtx{P})\right)^2\mu_2(u,v)+t^2}}\right),\]
        where $\mu_2(u,v)=\mu(u)p(u,v)$, $\gamma(\mtx{P})=\eta_p(\mtx{P})/(1+\eta_p(\mtx{P}))$.
        \item \label{item:MLEmainresults MSE}It holds that 
        \[\E\left[\left\|\mtx{R}_n-\mtx{D}_{\mu}\mtx{P}\right\|_{\mathrm{F}}^2\right]\leq \|\nu/\mu\|_{\infty}\min \left\{  \frac{4+\eta_p(\mtx{P})}{n\eta_p(\mtx{P})},\frac{2+\eta_a(\mtx{P})}{n\eta_a(\mtx{P})} \right\}.\]
        Moreover, when $\mtx{P}$ is reversible,
        \[\E\left[\left\|\mtx{R}_n-\mtx{D}_{\mu}\mtx{P}\right\|_{\mathrm{F}}^2\right]\leq \|\nu/\mu\|_{\infty}\frac{2+\eta(\mtx{P})}{n\eta(\mtx{P})}.\]
    \end{enumerate}
\end{theorem}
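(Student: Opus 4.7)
The plan is to lift the chain to the length-2 path space $\Omega^2$ by setting $\mtx{u}_k := (u_k, u_{k+1})$; this is a Markov chain with some transition matrix $\mtx{P}_2$ whose unique invariant distribution is $\mu_2(u,v) := \mu(u) p(u,v)$, and for each fixed pair $(u,v)$ we have
\begin{equation}
r_n(u,v) \;=\; \frac{1}{n} \sum_{k=1}^{n} \mathbb{1}_{(u,v)}(\mtx{u}_k).
\end{equation}
The preparatory step, which I expect to occupy Section~\ref{sec:proofs}, is a quantitative comparison between $\eta_p(\mtx{P}_2)$ and $\eta_p(\mtx{P})$. The precise form of part (\ref{item:MLEmainresults tail bound}) --- involving $\gamma(\mtx{P}) = \eta_p(\mtx{P})/(1 + \eta_p(\mtx{P}))$ --- strongly suggests a bound of the form $\eta_p(\mtx{P}_2) \geq \gamma(\mtx{P})$, with an analogue in the reversible case replacing the IP gap by the usual spectral gap. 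The initial-distribution ratio also transfers cleanly: $\nu_2(u,v) = \nu(u) p(u,v)$ yields $\|\nu_2/\mu_2\|_\infty = \|\nu/\mu\|_\infty$.

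Given this comparison, part (\ref{item:MLEmainresults convergence}) follows from the ergodic theorem for irreducible finite-state Markov chains applied to $\{\mtx{u}_k\}$: each entry $r_n(u,v)$ converges almost surely to $\E_{\mu_2}[\mathbb{1}_{(u,v)}] = \mu_2(u,v)$, and since $\Omega^2$ is finite this upgrades to the matrix statement. For part (\ref{item:MLEmainresults tail bound}), I would apply the Bernstein-type tail inequality for Markov chains from \cite{huang2024bernsteintypeinequalitiesmarkovchains} to the $[0,1]$-valued test function $\mathbb{1}_{(u,v)}$ on $\Omega^2$, using the variance proxy $\Var_{\mu_2}[\mathbb{1}_{(u,v)}] \leq \mu_2(u,v)$ together with the IP-gap bound from the preparatory step.

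For part (\ref{item:MLEmainresults MSE}), naively integrating the tail bound in (\ref{item:MLEmainresults tail bound}) and summing entry-wise would introduce a spurious dimension factor. Instead I would appeal directly to a variance bound for Markov empirical means of the form
\begin{equation}
\E\bigl[(r_n(u,v) - \mu_2(u,v))^2\bigr] \;\leq\; \frac{\|\nu/\mu\|_\infty\,(4+\eta_p(\mtx{P}))}{n\,\eta_p(\mtx{P})}\,\Var_{\mu_2}[\mathbb{1}_{(u,v)}].
\end{equation}
Since $\Var_{\mu_2}[\mathbb{1}_{(u,v)}] \leq \mu_2(u,v)$ and $\sum_{u,v}\mu_2(u,v) = 1$, summing these entry-wise estimates to form $\E\|\mtx{R}_n - \mtx{D}_\mu \mtx{P}\|_{\mathrm{F}}^2$ telescopes against the normalization of $\mu_2$, producing a bound that is entirely free of $d$. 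The $\eta_a$ version arises from the same calculation using the additive-reversibilization spectral gap in place of $\eta_p$, and the reversible case uses $\eta(\mtx{P})$ with the same argument after noting that symmetrization issues do not degrade the constants.

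The main obstacle is the preliminary comparison between $\eta_p(\mtx{P}_2)$ and $\eta_p(\mtx{P})$. Even when $\mtx{P}$ is reversible, $\mtx{P}_2$ is generally not: from a state $(u,v)$ one can only transition to pairs beginning with $v$, a rigid constraint that destroys symmetry under $\mu_2$. Bounding the non-reversible IP gap of $\mtx{P}_2$ by $\gamma(\mtx{P})$ will therefore require a careful analysis of the action of $\mtx{\Id} - \mtx{P}_2$ on mean-zero functions on $\Omega^2$, presumably by decomposing such functions into a component depending only on the second coordinate (on which $\mtx{P}_2$ acts like $\mtx{P}$) and a complementary component whose contribution to $\|(\mtx{\Id}-\mtx{P}_2)h\|_{\mu_2}$ is controlled by an elementary deterministic estimate.
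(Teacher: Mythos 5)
Your treatment of parts (\ref{item:MLEmainresults convergence}) and (\ref{item:MLEmainresults tail bound}) matches the paper: lift to $\Omega^2$, note $\|\nu_2/\mu_2\|_\infty=\|\nu/\mu\|_\infty$ and $\Var_{\mu_2}[\mathbb{1}_{(u,v)}]\le\mu_2(u,v)$, apply the Bernstein-type inequality of \cite{huang2024bernsteintypeinequalitiesmarkovchains} to $\mtx{P}_2$, and invoke $\eta_p(\mtx{P}_2)\ge\eta_p(\mtx{P})/(1+\eta_p(\mtx{P}))$. Your sketch of that comparison (the image $\mtx{P}_2h$ depends only on the second coordinate, on which $\mtx{P}$ acts) is exactly the paper's argument, so the ``main obstacle'' you flag is surmountable along the lines you indicate.

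Part (\ref{item:MLEmainresults MSE}) contains a genuine gap. You propose to ``appeal directly to a variance bound'' of the form $\E[(r_n(u,v)-\mu_2(u,v))^2]\le \|\nu/\mu\|_\infty\,(4+\eta_p(\mtx{P}))\Var_{\mu_2}[\mathbb{1}_{(u,v)}]/(n\eta_p(\mtx{P}))$, but the gap appearing here is that of the \emph{original} chain $\mtx{P}$, while the empirical mean is driven by $\mtx{P}_2$. Any black-box variance bound for Markov empirical means applied to $\{\mtx{u}_k\}$ yields a constant governed by a spectral gap of $\mtx{P}_2$, not of $\mtx{P}$. Routing through $\eta_p(\mtx{P}_2)\ge\gamma(\mtx{P})$ salvages the IP-gap bound only up to a worse constant, and it fails entirely for the other two claims: the paper proves $\eta_a(\mtx{P}_2)\equiv 0$ for every $\mtx{P}$, and $\mtx{P}_2$ is non-reversible even when $\mtx{P}$ is reversible, so neither the $\eta_a(\mtx{P})$ bound nor the $\eta(\mtx{P})$ bound can be obtained by applying a generic variance estimate to $\mtx{P}_2$ and transferring gaps. (Your phrase ``additive-reversibilization spectral gap'' for $\eta_a$ is also a mismatch: $\eta_a$ is the singular-value gap; the additive reversibilization gives $\eta_s$.) The missing idea is the paper's factorization $\mtx{P}_2=\mtx{S}\mtx{T}$, $\mtx{P}=\mtx{T}\mtx{S}$, with $\mtx{S}_1=\mtx{D}_{\mu_2}^{1/2}\mtx{S}\mtx{D}_\mu^{-1/2}$ and $\mtx{T}_1=\mtx{D}_\mu^{1/2}\mtx{T}\mtx{D}_{\mu_2}^{-1/2}$ satisfying $\mtx{S}_1^{\mathrm{T}}\mtx{S}_1=\mtx{T}_1\mtx{T}_1^{\mathrm{T}}=\mtx{\Id}$. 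Writing the autocovariance sum as $\mtx{S}\,\phi(\mtx{P}-\mtx{\Pi})(\mtx{\Id}-\mtx{\Pi})\,\mtx{T}$ and using that $\mtx{S}_1,\mtx{T}_1$ are partial isometries reduces the operator-norm bound on $\Omega^2$ to one for $\phi(\mtx{P}-\mtx{\Pi})(\mtx{\Id}-\mtx{\Pi})$ on $\Omega$, which is then controlled by $\eta_p(\mtx{P})$ (via invertibility of $\mtx{\Id}-\mtx{P}$ on $L_{2,\mu}^0$), by $\eta_a(\mtx{P})$ (via $\phi(\lambda_a)$), or by $\eta(\mtx{P})$ (via the spectral theorem) respectively. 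Without this reduction the stated constants, and in two of the three cases the bounds themselves, are not reachable by your route.
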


When $\mtx{P}$ is reversible, the Markov chain constructed in Theorem~\ref{thm:main results of natural counting} is non-reversible in general. In order to utilize the reversibility of $\mtx{P}$, we can symmetrize $\Omega^2$ by viewing $(u_1,u_2)$ and $(u_2,u_1)$ as the same element to define a symmetric length-2 path space $\tilde{\Omega}^2$ and construct a reversible transition matrix $\tilde{\mtx{P}}_2$ on $\tilde{\Omega}^2$. When we have access to an online oracle of $\mtx{P}$ that takes an input $u\in \Omega$ and outputs a state $v\in \Omega$ with probability $p(u,v)$, we propose a Symmetric Counting Estimation (SCE) algorithm that generates a sequence $\{\tilde{\mtx{u}}_k\}_{k\geq 1}\subset \tilde{\Omega}^2$ according to $\tilde{\mtx{P}}_2$ and approximates $\mtx{D}_\mu\mtx{P}$ using the symmetric matrix empirical mean 
\[\mtx{H}_n=\frac{1}{n}\sum\limits_{k=1}^n\tilde{\mtx{F}}(\tilde{\mtx{u}}_k),\]
where $\tilde{\mtx{F}}$ is some matrix-valued function defined on $\tilde{\Omega}^2$ to be specified later. We will show that $\tilde{\mtx{P}}_2$ admits a nonzero absolute spectral gap that is exactly half of the spectral gap of $\mtx{P}$. As a result, we can obtain non-asymptotic error bounds for SCE as in the following theorem. 

\begin{theorem}\label{thm:main results of symmetric counting}
    Let $\mtx{P}$ be an irreducible reversible transition matrix on $\Omega$ with an invariant distribution $\mu$, and let the corresponding symmetric path space $\tilde{\Omega}^2$ be defined as above. The followings hold:
    \begin{enumerate}
        \item \label{item:SCEmainresults convergence} There exists a reversible Markov chain $\{\tilde{\mtx{u}}_k\}_{k\geq 1}$ on $\tilde{\Omega}^2$ and a symmetric matrix $\mtx{H}_n=[h_n(u,v)]_{u,v\in\Omega}$ generated by $\{\tilde{\mtx{u}}_k\}_{k\geq 1}$ such that $\mtx{H}_n$ converges to $\mtx{D}_\mu\mtx{P}$ almost surely.
        \item \label{item:SCEmainresults tail bound}For any $t>0$,
        \[\mathbb{P}\left\{\left\|\mtx{H}_n-\mtx{D}_{\mu}\mtx{P}\right\|\geq t\right\}\leq 2\|\nu/\mu\|_{\infty}d^{2-\frac{\pi}{4}}\exp\left(\frac{-n\, \eta(\mtx{P})\,t^2}{128\pi^{-2} (2-\eta(\mtx{P})/2)+1024\pi^{-3}t}\right),\]
        where $\nu$ is some initial distribution on $\Omega$ which is related to the generation of $\tilde{\mtx{u}}_1$.
        \item \label{item:SCEmainresults MSE} It holds that
        \[\E\left[\left\|\mtx{H}_n-\mtx{D}_{\mu}\mtx{P}\right\|_{\mathrm{F}}^2\right]\leq \|\nu/\mu\|_{\infty}\frac{4-\eta(\mtx{P})}{n\eta(\mtx{P})}.\]
   \end{enumerate}
\end{theorem}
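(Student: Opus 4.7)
The strategy splits into three parts matching the three assertions. First, I would realize SCE as a Markov empirical mean of a matrix-valued observable on a reversible chain on the symmetric path space: define $\tilde{\Omega}^2$ by identifying $(u,v)$ and $(v,u)$ in $\Omega\times\Omega$; using the detailed balance $\mu(u)p(u,v)=\mu(v)p(v,u)$, set an invariant distribution $\tilde{\mu}_2$ on $\tilde{\Omega}^2$ via the symmetric mass $\mu(u)p(u,v)$ (with a factor of $\tfrac12$ on off-diagonal unordered pairs); and construct a kernel $\tilde{\mtx{P}}_2$ on $\tilde{\Omega}^2$ so that from an unordered pair one first picks one endpoint uniformly and then draws a new state via the oracle for $\mtx{P}$, yielding a chain whose reversibility is built in. Define $\tilde{\mtx{F}}$ to be the symmetric matrix $\tfrac12(\mtx{E}_{uv}+\mtx{E}_{vu})$ evaluated at the pair, with normalization chosen so that $\mathbb{E}_{\tilde{\mu}_2}[\tilde{\mtx{F}}]=\mtx{D}_\mu\mtx{P}$ entrywise. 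Part~(1) then follows from the pointwise ergodic theorem applied to each matrix entry of $\mtx{H}_n$.

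For part~(2), I would invoke a matrix Bernstein-type concentration inequality for reversible Markov chains as developed in \cite{neeman2023concentration}. Such a bound depends quantitatively on the spectral gap of the governing chain $\tilde{\mtx{P}}_2$, a matrix variance proxy of $\tilde{\mtx{F}}$, and an almost-sure operator-norm bound on $\tilde{\mtx{F}}$. The key quantitative step, foreshadowed in the statement, is the identity $\eta(\tilde{\mtx{P}}_2)=\eta(\mtx{P})/2$, which I would prove by relating the Dirichlet form of $\tilde{\mtx{P}}_2$ on $L_{2,\tilde{\mu}_2}$ to that of $\mtx{P}$ on $L_{2,\mu}$ via the lift/projection between symmetric functions on $\tilde{\Omega}^2$ and functions on $\Omega$. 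The variance-proxy calculation (yielding the $2-\eta(\mtx{P})/2$ term) and the uniform bound (yielding the term linear in $t$) are then direct from the explicit form of $\tilde{\mtx{F}}$. The dimensional prefactor $d^{2-\pi/4}$ inherits from the matrix concentration estimate of \cite{neeman2023concentration}, while $\|\nu/\mu\|_{\infty}$ absorbs the nonstationary initial condition.

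For part~(3), rather than integrating the tail bound (which would introduce spurious dimensional factors), I would compute $\E\|\mtx{H}_n-\mtx{D}_\mu\mtx{P}\|_{\mathrm{F}}^2$ directly as a sum over $(u,v)\in\Omega^2$ of entrywise variances of scalar Markov empirical means. Each such variance admits a standard $(n\eta)^{-1}$-type bound for reversible chains in terms of the stationary second moment of the corresponding indicator observable. Summing and using $\sum_{u,v}\mu(u)p(u,v)=1$ cancels any potential dimensional dependence and produces the dimension-free prefactor $(4-\eta(\mtx{P}))/(n\eta(\mtx{P}))$; the factor $\|\nu/\mu\|_{\infty}$ again handles the initial distribution.

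The principal obstacle I anticipate is the spectral-gap identity $\eta(\tilde{\mtx{P}}_2)=\eta(\mtx{P})/2$, since symmetrization mixes ordered pairs nontrivially and the eigenvalues of $\tilde{\mtx{P}}_2$ must be extracted by a careful lift argument relating symmetric functions on $\tilde{\Omega}^2$ to functions on $\Omega$. Once that identity is in place, parts~(2) and~(3) reduce to essentially routine applications of the matrix Bernstein inequality of \cite{neeman2023concentration} and of an entrywise variance calculation for reversible Markov chains, with a standard change-of-measure step producing the $\|\nu/\mu\|_{\infty}$ factor uniformly.
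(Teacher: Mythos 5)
Your overall architecture coincides with the paper's: realize SCE as a matrix empirical mean of $\tilde{\mtx{F}}$ along a reversible chain on $\tilde{\Omega}^2$, get part (1) from unbiasedness plus the ergodic theorem, get part (2) from the matrix Bernstein inequality of \cite{neeman2023concentration}, and get part (3) by a direct entrywise variance computation that avoids the dimensional factor. However, the step you yourself flag as the principal obstacle --- the spectral identity for $\tilde{\mtx{P}}_2$ --- is where your sketch has a genuine gap, in two respects. First, the concentration inequality of \cite{neeman2023concentration} is stated in terms of the \emph{absolute} spectral gap $\eta_a(\tilde{\mtx{P}}_2)=1-\|\tilde{\mtx{P}}_2-\tilde{\mtx{\Pi}}_2\|_{\tilde{\mu}_2}$, not the ordinary gap $\eta(\tilde{\mtx{P}}_2)$ that a Dirichlet-form comparison controls. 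A Dirichlet form only sees the top of the spectrum; you must also rule out eigenvalues near $-1$. Second, the lift/projection correspondence between functions on $\Omega$ and symmetric functions on $\tilde{\Omega}^2$ only covers a $d$-dimensional subspace of the $d(d+1)/2$-dimensional space $L_{2,\tilde{\mu}_2}$, so comparing Dirichlet forms on lifted functions gives at best an inequality in one direction; the infimum defining $\eta(\tilde{\mtx{P}}_2)$ ranges over the whole mean-zero space, and you must say what $\tilde{\mtx{P}}_2$ does on the orthogonal complement of the lifts.

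The paper resolves both issues simultaneously with a factorization rather than a Dirichlet-form argument: writing $\tilde{\mtx{P}}_2=\tilde{\mtx{S}}\tilde{\mtx{T}}$ (first pick an endpoint of the unordered pair, then advance by the oracle) and checking that $\tilde{\mtx{T}}\tilde{\mtx{S}}=(\mtx{P}+\mtx{\Id})/2$, it concludes that $\tilde{\mtx{P}}_2$ and $(\mtx{P}+\mtx{\Id})/2$ share the same nonzero eigenvalues with multiplicity. This shows at once that the complement of the lifted subspace carries only the eigenvalue $0$, that the entire spectrum of $\tilde{\mtx{P}}_2$ is nonnegative (so $\eta_a(\tilde{\mtx{P}}_2)=\eta(\tilde{\mtx{P}}_2)$), and that $\lambda(\tilde{\mtx{P}}_2)=(1+\lambda(\mtx{P}))/2$, i.e.\ $\eta_a(\tilde{\mtx{P}}_2)=\eta(\mtx{P})/2$. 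Without this (or an equivalent positivity argument), your application of \cite{neeman2023concentration} in part (2) is not justified as written. Your part (3), by contrast, survives with only the ordinary gap, since $(1+\lambda)/(1-\lambda)$ is increasing and the worst case over the mean-zero spectrum is attained at $\lambda(\tilde{\mtx{P}}_2)$, yielding $(4-\eta(\mtx{P}))/(n\eta(\mtx{P}))$ exactly as claimed; one minor point there is that the off-diagonal entries of $\mtx{H}_n$ carry an extra factor $1/2$ relative to the indicators $\mathbb{1}_{\widetilde{(u,v)}}$, which is what lets the sum of entrywise variances be dominated by $\sum_{\tilde{\mtx{v}}}\|\bar{\mathbb{1}}_{\tilde{\mtx{v}}}\|_{\tilde{\mu}_2}^2\leq 1$.
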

\subsection{Generalized spectral gaps}\label{subsec:other spectral gaps}
In the non-reversible case, various notions of spectral gaps have been proposed as substitutes for the usual spectral gap before \cite{chatterjee2025spectral}. A commonly used quantity is the \textit{absolute $L_2$-spectral gap} (or simply the \textit{absolute spectral gap}), which is defined as $\eta_a(\mtx{P})=1-\lambda_a(\mtx{P})$, where 
\[\lambda_a(\mtx{P}):=\sup _{h \in L_{2,\mu}^0,\ h \neq \mtx{0}} \frac{\|\mtx{P} h\|_{\mu}}{\|h\|_{\mu}}=\|\mtx{P}-\mtx{\Pi}\|_{\mu}.\]
Here $\mtx{\Pi}:=\mtx{1}\mu^{\mathrm{T}}$ is the projection onto $\mtx{1}$ under the $L_{2,\mu}$ inner product. One can see that $\eta_a(\mtx{P})$ is the gap between $1$ and the second largest singular value of $\mtx{P}$ under the $L_{2,\mu}$ inner product. Alternatively, $\eta_a(\mtx{P})$ can be understood as the usual spectral gap of the self-adjoint operator $\left(\mtx{P}^*\mtx{P}\right)^{1/2}$, where $\mtx{P}^*$ is the adjoint of $\mtx{P}$ under the $L_{2,\mu}$ inner product. This quantity was widely used in the literature to bound the mixing time \cite{fill1991eigenvalue} and establish concentration inequalities \cite{lezaud1998chernoff,leon2004optimal} for non-reversible Markov chains.

A generalization of the absolute spectral gap, called the \textit{pseudo spectral gap}, was first introduced in \cite{paulin2015concentration} to establish concentration inequalities for non-reversible Markov chains. It is defined as 
\[\eta_{ps}(\mtx{P}):=\sup_{k\in \N_+}\frac{1}{k}\left(1-\lambda_{a}^2\left(\mtx{P}^k\right)\right)=\sup_{k\in \N_+}\frac{\eta\left((\mtx{P}^*)^k\mtx{P}^k\right)}{k}.\] 
Advanced theoretical studies, estimations and applications of the pseudo spectral gap can be found in \cite{wolfer2019estimating,wolfer2024improved}.

Another way to define the spectral gap for a non-reversible Markov chain is by simply using the usual spectral gap of the self-adjoint operator $\mtx{A}:=(\mtx{P}^*+\mtx{P})/2$, the additive symmetrization of $\mtx{P}$. To be specific, the \textit{symmetric spectral gap} of $\mtx{P}$ is defined as $\eta_s(\mtx{P}):=1-\lambda_s(\mtx{P})$, where 
\[\lambda_s(\mtx{P}):= \sup _{h \in L_{2,\mu}^0,\ h \neq \mtx{0}} \frac{\la h,\mtx{P} h\ra_{\mu}}{\|h\|_{\mu}^2}=  \sup _{h \in L_{2,\mu}^0,\ h \neq \mtx{0}} \frac{\la h,\mtx{A} h\ra_{\mu}}{\|h\|_{\mu}^2}=\lambda(\mtx{A}),\]
 It is shown in \cite{fill1991eigenvalue} that $\eta_s(\mtx{P})$ can be used to get upper bounds on the mixing time of $\mtx{P}$ when $\mtx{P}$ is strongly aperiodic. Moreover, the additive reversiblization $\mtx{A}$ is usually more user-friendly than the multiplicative reversiblization $\mtx{P}^*\mtx{P}$. However, except for \cite{fan2021hoeffding, jiang2018bernstein}, which appear to contain a technical gap in the proof of a key lemma (see \cite{huang2026bernstein} for details), there are no concentration results that depend on $\eta_s(\mtx{P})$.

 Note that in the reversible case, it always holds that $\eta(\mtx{P})=\eta_p(\mtx{P})= \eta_s(\mtx{P})$, and if the second largest eigenvalue of $\mtx{P}$ in absolute value is positive, then it also holds that $\eta_p(\mtx{P}) =\eta_a(\mtx{P})$. When $\mtx{P}$ is non-reversible, one can prove that the relations $\eta_p(\mtx{P})\geq \eta_s(\mtx{P})\geq \eta_a(\mtx{P})$ and $\eta_p(\mtx{P})\geq \eta_{ps}(\mtx{P})/2$ (Lemma \ref{lem:comparison_of_spectral gap}) hold for any non-reversible Markov transition matrix. Moreover, by the Perron--Frobenius theorem, the irreducibility of $\mtx{P}$ (which implies the irreducibility of $(\mtx{P}^*+\mtx{P})/2$) guarantees that $\eta_p(\mtx{P})\geq \eta_s(\mtx{P})>0$. However, irreducibility does not guarantee that $\eta_a(\mtx{P})>0$. We can only show that $\lambda_a(\mtx{P})\leq 1$ (and hence $\eta_a(\mtx{P})\geq 0$) by Jensen's inequality:
\begin{equation}\label{lambda_a leq 1}
    \|\mtx{P} h\|_{\mu}^2=\sum\limits_{u\in \Omega}\mu(u)\left(\sum\limits_{v\in\Omega} p(u,v)h(v)\right)^2 \leq \sum\limits_{u\in \Omega}\mu(u)\sum\limits_{v\in\Omega}p(u,v) h(v)^2 = \|h\|_{\mu}^2.
\end{equation}
In fact, there are simple examples where $\eta_p(\mtx{P})>0$ but the ratios $\eta_a(\mtx{P})/\eta_p(\mtx{P})$ and $\eta_{ps}(\mtx{P})/\eta_p(\mtx{P})$ can be arbitrarily small or even zero (see Appendix \ref{sec:appendix}).

For technical convenience, in the rest of this paper we will assume $\mtx{P}$ is a dense matrix, i.e. $p(u,v)\ne 0$ for any $u,v\in\Omega$.  Note that all spectral properties discussed above are well-posed under small perturbation on $\mtx{P}$, and hence all arguments and conclusions in this paper can be readily generalized to non-dense cases by the limiting argument.
\subsection{Related works}
There have been many related works that also focus on learning and testing Markov transition matrices \cite{hao2018learning,wolfer2021statistical,chan2021learning,wolfer2024empirical}. Most relevantly, \cite{wolfer2021statistical} controls the probabilistic sample complexity of MLE in terms of the mixing time $t_{\text{mix}}$. More specifically, we say the transition matrix $\mtx{P}$ is geometrically ergodic, if there exists $\rho\in(0,1)$ such that for any $u\in \Omega$,
\[\left\|\mtx{P}^n(u,\cdot)-\mu^{\mathrm{T}}\right\|_{\text{TV}}\leq C \rho^n,\] where $\|\cdot\|_{\text{TV}}=\|\cdot\|_{\ell_1}/2$ is the standard total variation norm. The mixing time of $\mtx{P}$ is then defined by 
\[t_{\text{mix}}:=\inf\left\{n:\sup_{\nu} \left\|\nu^{\mathrm{T}}\mtx{P}^n-\mu^{\mathrm{T}} \right\|_{\text{TV}}\leq 1/4\right\},\]
where the supremum is taken over all distributions over $\Omega$.

It is shown in \cite[Theorem 3.3]{wolfer2021statistical} that if $\mtx{P}$ is geometrically ergodic, then for any $\varepsilon,\delta>0$ and any
\[n\geq c  \frac{t_{\text{mix}}}{\varepsilon^2} \max \left\{\|\mtx{D}_{\mu}\mtx{P}\|_{1 / 2}, \log \left(\frac{\|\nu/ \mu\|_{\mu}}{\delta}\right)\right\},\] $\|\mtx{R}_n-\mtx{D}_{\mu}\mtx{P}\|_{\text{TV}}<\varepsilon $ holds with probability at least $1-\delta$. Here $c$ is an absolute constant, $\|\mtx{R}_n-\mtx{D}_{\mu}\mtx{P}\|_{\text{TV}}$ is the total variation error between $\mtx{R}_n$ and $\mtx{D}_{\mu}\mtx{P}$ (interpreted as distributions over $\Omega^2$), and 
\[\|\mtx{D}_{\mu}\mtx{P}\|_{1/2}:=\left(\sum\limits_{u,v\in \Omega} (\mu(u)p(u,v))^{1/2}\right)^2.\] 
 It is not hard to see that $\|\mtx{D}_{\mu}\mtx{P}\|_{1/2}$ scales like $d^2$ in the worst case. Hence, the resulting sample complexity may depend on the dimension $d$.
 
To establish this result, they (in \cite{wolfer2021statistical}) first showed that in the stationary case, $\mathbb{E}[\|\mtx{R}_n-\mtx{D}_{\mu}\mtx{P}\|_{\text{TV}}]<\varepsilon/2$ for $n \geq c t_{\text{mix}}\|\mtx{D}_{\mu}\mtx{P}\|_{1/2} /\varepsilon^2$, and then invoked the McDiarmid's inequality for Markov chains established in \cite[Corollary 2.10]{paulin2015concentration} to control the deviation probability of $\|\mtx{R}_n-\mtx{D}_{\mu}\mtx{P}\|_{\text{TV}}$ around its expectation:
\begin{equation}\label{eqt:concentration of error norm}
    \Prob{ |\|\mtx{R}_n-\mtx{D}_{\mu}\mtx{P}\|_{\text{TV}}-\mathbb{E}[\|\mtx{R}_n-\mtx{D}_{\mu}\mtx{P}\|_{\text{TV}}]|>\varepsilon/2}\leq 2\exp\left(-\frac{n\varepsilon^2}{ct_{\text{mix}}}\right).
\end{equation}
Finally, the result was extended to the non-stationary case with an extra factor $\|\nu/ \mu\|_{\mu}$.

It is worth noting that \cite[Theorem 3.3]{wolfer2021statistical} only works when the transition matrix $\mtx{P}$ is geometrically ergodic. However, by the central limiting theorem for Markov chains, one does not need the geometric ergodicity of $\mtx{P}$ to guarantee the convergence of the Markov empirical mean to its space average. In comparison, our Theorem \ref{thm:main results of natural counting}(\ref{item:MLEmainresults tail bound}) establishes a  probabilistic error bound for MLE at the entry-wise level, which can be applied to any irreducible Markov chain (not necessarily geometrically ergodic). Note that $r_n(u,v)$ can be interpreted as the Markov empirical mean of the indicator function $\mathbb{1}_{(u,v)}(\cdot,\cdot)$ corresponding to $\mtx{P}_2$. Therefore, a recently established Bernstein-type inequality for Markov sums \cite{huang2026bernstein}, under milder assumptions than \cite[Corollary 2.10]{paulin2015concentration}, can be applied to $\mtx{P}_2$ to derive that 
\[\Prob{|r_n(u,v)-\mu(u)p(u,v)|>t}\lesssim \exp\left(-cn\eta_p(\mtx{P}_2) t^2\right).\]
We then establish Theorem \ref{thm:main results of natural counting}(\ref{item:MLEmainresults tail bound}) by further showing that $\eta_p(\mtx{P}_2)\geq \eta_p(\mtx{P})/(1+\eta_p(\mtx{P}))$ in Theorem~\ref{thm:eigenvalue of P_2}. 

One may also consider applying typical concentration inequalities for Markov sums~\cite{lezaud1998chernoff,leon2004optimal,miasojedow2014hoeffding,paulin2015concentration,jiang2018bernstein,fan2021hoeffding} to establish tail bounds similar to Theorem \ref{thm:main results of natural counting}(\ref{item:MLEmainresults tail bound}). This will require analyzing other spectral gaps of $\mtx{P}_2$ defined in Subsection~\ref{subsec:other spectral gaps}, which will also be presented in Theorem~\ref{thm:eigenvalue of P_2} as a result of independent interest.
Specifically, we show that $\eta_a(\mtx{P}_2)\equiv 0$ for any transition matrix $\mtx{P}$. Hence, most typical concentration results are inapplicable to $\mtx{P}_2$, except for \cite[Theorem 3.4]{paulin2015concentration}. On the other hand, we prove that $\eta_{ps}(\mtx{P}_2)\geq \eta_{ps}(\mtx{P})/2$, which allows us to apply \cite[Theorem 3.4]{paulin2015concentration} to deduce that
\begin{equation}\label{eqt:result_using_pseudo_gap}
    \Prob{|r_n(u,v)-\mu(u)p(u,v)|>t}\lesssim \exp\left(-cn\eta_{ps}(\mtx{P}) t^2\right)
\end{equation}
if $n\gtrsim 1/(\eta_{ps}(\mtx{P}))$. However, as shown in Subsection~\ref{subsec:other spectral gaps}, if $\mtx{P}$ is not geometrically ergodic, then it is possible that $\eta_{ps}(\mtx{P})=0$. Moreover, since $\eta_p(\mtx{P})\geq \eta_{ps}(\mtx{P})/2$, the bound in \eqref{eqt:result_using_pseudo_gap} does not provide any improvement over the spectral gap term of Theorem \ref{thm:main results of natural counting}(\ref{item:MLEmainresults tail bound}). In fact, since all other spectral gaps introduced in Subsection~\ref{subsec:other spectral gaps} can be bounded in terms of the IP gap, the only potential benefit of employing other concentration results instead of \cite{huang2026bernstein} lies in achieving a possibly better constant $c$ in the resulting tail bound.

Since Theorem \ref{thm:main results of natural counting}(\ref{item:MLEmainresults tail bound}) is an entry-wise result, it is natural to ask if we can establish a probabilistic error bound for MLE at the matrix level that depends on the IP gap of $\mtx{P}$. The main challenge in obtaining a result analogous to \cite[Theorem 3.3]{wolfer2021statistical} lies in studying the concentration of the error norm \eqref{eqt:concentration of error norm}, since Paulin's result \cite[Corollary 2.10]{paulin2015concentration} requires a finite mixing time, which is not guaranteed for all irreducible transition matrices. It would be interesting to develop a Markov concentration inequality for general functions of $n$ variables instead of sums based on the IP gap. If such a result is established, one could then derive a matrix-level bound analogous to~\cite[Theorem 3.3]{wolfer2021statistical}. We will leave it as a future work. 

On the other hand, when $\mtx{P}$ is reversible, the transition matrix $\tilde{\mtx{P}}_2$ defined on the symmetric path space $\tilde{\Omega}^2$ always admits a finite mixing time, which can be shown to be bounded by the mixing time of $(\mtx{P}+\mtx{\Id})/2$ plus 1 similar to \cite[Lemma 6.1]{wolfer2021statistical}. Therefore, following the proof strategy similar to \cite[Theorem 3.3]{wolfer2021statistical}, one can derive a matrix-level bound for SCE in terms of the mixing time of $(\mtx{P}+\mtx{\Id})/2$, which can further be  bounded by $c\log(1/\mu_*)/\eta(\mtx{P})$. Here $\mu_*:=\min_{u\in\Omega}\mu (u)>0$ by the Perron--Frobenius theorem.
We do not pursue this approach here, as the proof is essentially the same. Instead, we establish Theorem \ref{thm:main results of symmetric counting}(\ref{item:SCEmainresults tail bound}) by applying a recently established matrix concentration inequality \cite{neeman2024concentration} to  $\tilde{\mtx{P}}_2$, providing a matrix-level error bound for SCE via a more direct method. Note, however, that the result in \cite{neeman2024concentration} requires a nonzero absolute spectral gap of the transition matrix, which does not hold for $\mtx{P}_2$. Therefore, extending this approach to MLE would require a matrix concentration result based on more robust spectral gaps (e.g., the IP gap), which is another interesting direction for future research.

\section{Markov chains on path spaces}\label{sec:proofs}
This section focuses on the induced Markov chains on the path spaces. First, we construct Markov chains on $\Omega^2$ and $\tilde{\Omega}^2$ with $\mtx{P}_2$ and $\tilde{\mtx{P}}_2$ as their transition matrices respectively. Following this, the framework of MLE and SCE is proposed in detail. Lastly, we study the spectral properties of $\mtx{P}_2$ and $\tilde{\mtx{P}}_2$, which are crucial for establishing non-asymptotic error bounds for MLE and SCE in Section \ref{sec:error bounds}.
\subsection{Construction of \texorpdfstring{$\mtx{P}_2$}{$P_2$} and \texorpdfstring{$\tilde{\mtx{P}}_2$}{$\tilde{P}_2$}}\label{construction of Markov chain on path space}
Let us introduce the construction of the induced Markov chains on length-2 path spaces.
\subsubsection{Construction of \texorpdfstring{$\mtx{P}_2$}{$P_2$} }\label{subsubsec:construction of P2}
Recall that $\mtx{P}=[p(u, v)]_{u,v \in \Omega}$ is a transition matrix on the state space $\Omega$ with a unique invariant distribution $\mu$ . Then the length-2 path space $\Omega^2=\{(u_1,u_2):u_1,u_2\in \Omega\}$ is naturally equipped with a distribution $\mu_2$ that is given by
\[\mu_2\left(u_1, u_2\right)=\mu\left(u_1\right) p\left(u_1, u_2\right) , \quad u_i \in \Omega, i=1,2.\]
We will use bold letters, e.g. $\mtx{u}=(u_1,u_2)$ to denote elements in $\Omega^2$. This Markov transition law on $\Omega$ naturally induces a Markov transition law on $\Omega^2$, whose transition probabilities are as follows:
\begin{enumerate}
    \item \label{construction of P2:step 1}Given a state $\mtx{u}=(u_1,u_2)\in \Omega^2$, delete the first element $u_1$.
    \item \label{construction of P2:step 2}Choose a state $v\in \Omega$ according to the probability 
    \[\mathbb{P}\{\text {choose}~v\}=p\left(u_2, v\right),\]
    and then jump to the state $\mtx{v}=(u_2,v)$. 
\end{enumerate}   
We will use $\mtx{P}_2=[p_2(\mtx{u},\mtx{v})]_{\mtx{u},\mtx{v}\in \Omega^2}$ to denote the transition matrix of this Markov chain on $\Omega^2$. Later we will show that $\mu_2$ is the unique invariant distribution of $\mtx{P}_2$. 

In general, suppose $\{u_k\}_{k\geq 1}$ is a Markov sequence generated by $\mtx{P}$ with an initial distribution $u_1\sim \nu$. Then it is not hard to check that $\{\mtx{u}_k=(u_k,u_{k+1})\}_{k\geq 1}$ can be viewed as a Markov chain on $\Omega^2$ generated by $\mtx{P}_2$ with an initial distribution $\mtx{u}_1=(u_1,u_2)\sim\nu_2$, where
\[\nu_2(u_1,u_2)=\nu(u_1)p(u_1,u_2).\]

\subsubsection{Construction of \texorpdfstring{$\tilde{\mtx{P}}_2$}{$\tilde{P}_2$}}\label{subsubsec:construction of tildeP2}
Note that $\mtx{P}_2$ is non-reversible in general even if $\mtx{P}$ is reversible. Next we turn to the construction of a reversible Markov chain on the symmetric length-2 path space for a reversible $\mtx{P}$. For two paths $\mtx{u}=(u_1, u_2)$ and $\mtx{v}=(v_1,v_2) \in \Omega^2$, we say $\mtx{u}$ is equivalent to $\mtx{v}$, denoted by $\mtx{u} \sim \mtx{v}$, if 
\[\left(u_1,u_2\right)=\left(v_1,v_2\right)\quad \text{or}\quad \left(u_1,u_2\right)=\left(v_2,v_1\right).\]
We then define the symmetric path space as $\tilde{\Omega}^2:=\Omega^2/\sim$. For a path $\mtx{u} \in \Omega^2$ and an equivalence class $\tilde{\mtx{u}} \in \tilde{\Omega}^2$, we write $\mtx{u} \sim \tilde{\mtx{u}}$ or $\tilde{\mtx{u}} \sim \mtx{u}$ if $\mtx{u}$ falls in $\tilde{\mtx{u}}$. The distribution $\mu_2$ on $\Omega^2$ induces a distribution $\tilde{\mu}_2$ on $\tilde{\Omega}^2$:
\[\tilde{\mu}_2(\tilde{\mtx{u}}):=\sum\limits_{\mtx{u} \sim \tilde{\mtx{u}}} \mu_2(\mtx{u}), \quad \tilde{\mtx{u}} \in \tilde{\Omega}^2.\]
Correspondingly, we can construct a reversible Markov chain on the symmetric path space $\tilde{\Omega}^2$ as follows whose invariant distribution is exactly $\tilde{\mu}_2$:
\begin{enumerate}
    \item \label{construction of tildeP2:step 1} Given a state $\tilde{\mtx{u}}=\widetilde{(u_1,u_2)}\in \tilde{\Omega}^2$, randomly choose a path $\mtx{w}=(w_1,w_2) \in \Omega^2$ according to the probability
    \[\mathbb{P}\{w_1=u_1,w_2=u_2\}= \mathbb{P}\{w_1=u_2,w_2=u_1\}=\frac{1}{2}.\]
    \item \label{construction of tildeP2:step 2} Choose a state $v \in \Omega$ according to the probability
   \[\mathbb{P}\{\text {choose } v\}=p\left(w_2, v\right),\]
and then jump to the state $\tilde{\mtx{v}} \in \tilde{\Omega}^2$ such that $(w_2, v) \sim \tilde{\mtx{v}}$.
\end{enumerate}
We will use $\tilde{\mtx{P}}_2=[\tilde{p}_2(\tilde{\mtx{u}}, \tilde{\mtx{v}})]_{\tilde{\mtx{u}},\tilde{\mtx{v}}\in \tilde{\Omega}^2}$ to denote the transition matrix of this Markov chain on $\tilde{\Omega}^2$. Roughly speaking, $\tilde{\mtx{P}}_2$ can be seen as a modification of $\mtx{P}_2$ by randomly reversing the path with probability $1/2$, which makes the process reversible. Similarly, we will show that $\tilde{\mu}_2$ is the unique invariant distribution of $\tilde{\mtx{P}}_2$.

Previously, we can get a Markov sequence of $\mtx{P}_2$ directly from a Markov sequence of $\mtx{P}$. But this is not the case for $\tilde{\mtx{P}}_2$ due to the extra reversing operation. Nevertheless, from the construction of $\tilde{\mtx{P}}_2$ we can see that, as long as we have access to an online oracle of $\mtx{P}$ that takes in a state $u\in \Omega$ and outputs $v\in \Omega$ with probability $p(u,v)$ and an initial distribution $\nu$ for generating the first single state, we can get a Markov sequence $\{\tilde{\mtx{u}}_k\}_{k\geq 1}$ generated by $\tilde{\mtx{P}}_2$ with $\tilde{\mtx{u}}_1\sim\tilde{\nu}_2$, where
\[\tilde{\nu}_2\left(\widetilde{(u_1,u_2)}\right)=\begin{cases}\nu(u_1)p(u_1,u_2)+\nu(u_2)p(u_2,u_1),\quad  &\text{if} \,\ u_1\ne u_2,\\
    \nu(u_1)p(u_2,u_1), \quad   &\text{if} \,\ u_1= u_2.
    \end{cases}\]

\subsubsection{Invariant distributions}
The next lemma shows that $\mu_2$ and $\tilde{\mu}_2$ are indeed the invariant distributions of $\mtx{P}_2$ and $\tilde{\mtx{P}}_2$ respectively, and that $\tilde{\mtx{P}}_2$ is a reversible transition matrix. 
 
\begin{lemma}\label{reversibility of symmetric path space}
    Let $\mtx{P}$ be an irreducible transition matrix on $\Omega$ with an invariant distribution $\mu$. Then $\mu_2$ is an invariant distribution of $\mtx{P}_2$, i.e. 
    \[\mu_2(\mtx{u})=\sum\limits_{\mtx{v}\in \Omega^2}\mu_2(\mtx{v})p_2(\mtx{v},\mtx{u}), \quad \text{for any}\,\  \mtx{u}\in \Omega^2.\]
    Moreover, when $\mtx{P}$ is reversible, $\tilde{\mu}_2$ is an invariant distribution of $\tilde{\mtx{P}}_2$, and $\tilde{\mtx{P}}_2$ satisfies the detailed balance condition:
    \begin{equation}\label{eqt: reversibility of tildeP_2}
        \tilde{\mu}_2(\tilde{\mtx{u}})\tilde{p}_2(\tilde{\mtx{u}},\tilde{\mtx{v}})= \tilde{\mu}_2(\tilde{\mtx{v}})\tilde{p}_2(\tilde{\mtx{v}},\tilde{\mtx{u}}), \quad \text{for any}\,\ \tilde{\mtx{u}},\tilde{\mtx{v}}\in \tilde{\Omega}^2.
     \end{equation}
      \end{lemma}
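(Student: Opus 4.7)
The plan is to verify both assertions by direct unpacking of the constructions of $\mtx{P}_2$ and $\tilde{\mtx{P}}_2$ from Sections~\ref{subsubsec:construction of P2} and~\ref{subsubsec:construction of tildeP2}, together with, for the reversible case, a reversal-of-path bijection that turns the detailed balance of $\mtx{P}$ into the detailed balance of $\tilde{\mtx{P}}_2$.

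For the invariance of $\mu_2$, I would first observe from the construction of $\mtx{P}_2$ that $p_2(\mtx{v},\mtx{u})$ with $\mtx{v}=(v_1,v_2)$ and $\mtx{u}=(u_1,u_2)$ vanishes unless $v_2=u_1$, in which case it equals $p(u_1,u_2)$. Therefore
\[
\sum_{\mtx{v}\in\Omega^2}\mu_2(\mtx{v})p_2(\mtx{v},\mtx{u})=p(u_1,u_2)\sum_{v_1\in\Omega}\mu(v_1)p(v_1,u_1)=p(u_1,u_2)\mu(u_1)=\mu_2(\mtx{u}),
\]
where the middle equality is the $\mtx{P}$-invariance of $\mu$. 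This is the first claim.

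For the reversible case, the strategy is to establish the detailed balance identity~\eqref{eqt: reversibility of tildeP_2} directly; the invariance of $\tilde{\mu}_2$ then follows by summing both sides over $\tilde{\mtx{v}}\in\tilde{\Omega}^2$. The key preliminary observation is that reversibility yields $\mu_2(w_1,w_2)=\mu(w_1)p(w_1,w_2)=\mu(w_2)p(w_2,w_1)=\mu_2(w_2,w_1)$, so $\mu_2$ is constant on equivalence classes, and consequently $\tilde{\mu}_2(\tilde{\mtx{u}})=|\tilde{\mtx{u}}|\,\mu_2(\mtx{w})$ for any $\mtx{w}\sim\tilde{\mtx{u}}$. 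Plugging this into the definition of $\tilde{p}_2$ cleanly absorbs the $1/|\tilde{\mtx{u}}|$ factor and yields
\[
\tilde{\mu}_2(\tilde{\mtx{u}})\,\tilde{p}_2(\tilde{\mtx{u}},\tilde{\mtx{v}})=\sum_{\substack{(w_1,w_2,v)\in\Omega^3\\(w_1,w_2)\sim\tilde{\mtx{u}},\,(w_2,v)\sim\tilde{\mtx{v}}}}\mu(w_1)\,p(w_1,w_2)\,p(w_2,v).
\]
I would then note that the reversal $(w_1,w_2,v)\mapsto(v,w_2,w_1)$ is an involutive bijection from the summation index set above to the one obtained by swapping $\tilde{\mtx{u}}$ and $\tilde{\mtx{v}}$, and that two applications of the detailed balance for $\mtx{P}$ preserve the weight $\mu(w_1)p(w_1,w_2)p(w_2,v)$. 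The resulting symmetry in $(\tilde{\mtx{u}},\tilde{\mtx{v}})$ is exactly \eqref{eqt: reversibility of tildeP_2}.

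The main subtlety in writing this out is the combinatorial bookkeeping of the factor $|\tilde{\mtx{u}}|\in\{1,2\}$ coming from whether $u_1=u_2$ or not; this would otherwise force a case split between diagonal and off-diagonal classes, but it is absorbed uniformly by the identity $\tilde{\mu}_2(\tilde{\mtx{u}})=|\tilde{\mtx{u}}|\mu_2(\mtx{w})$. Once the problem has been rewritten as a weighted sum over length-three paths in $\Omega^3$, the reversal bijection is essentially forced and no further obstacle remains.
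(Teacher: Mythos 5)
Your proposal is correct and follows essentially the same route as the paper: the $\mu_2$-invariance computation is identical, and the detailed balance of $\tilde{\mtx{P}}_2$ rests on the same identity $\mu(w_1)p(w_1,w_2)p(w_2,v)=\mu(v)p(v,w_2)p(w_2,w_1)$ obtained by applying the detailed balance of $\mtx{P}$ twice along a length-3 path. The only difference is presentational: you organize the verification as a reversal bijection on a sum over $\Omega^3$, which uniformly absorbs the $|\tilde{\mtx{u}}|\in\{1,2\}$ bookkeeping, whereas the paper fixes representatives $\tilde{\mtx{u}}\sim(u,w)$, $\tilde{\mtx{v}}\sim(v,w)$ and checks the generic off-diagonal case directly.
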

\begin{proof}
    Since $\mu$ is the invariant distribution of $\mtx{P}$, we know \eqref{eqt:invariant distribution} holds for any $u,v\in \Omega$. Thus, for any $\mtx{u}=(u_1,u_2)\in \Omega^2$,  we can compute that
    \begin{align*}
        \sum\limits_{\mtx{v}\in \Omega^2}\mu_2(\mtx{v})p_2(\mtx{v},\mtx{u})&=\sum\limits_{v\in \Omega}\mu_2\left(v,u_1\right)p\left(u_1,u_2\right)\\ 
        &=\sum\limits_{v\in \Omega}\mu(v)p\left(v,u_1\right)p\left(u_1,u_2\right)\\
        &=\mu\left(u_1\right)p\left(u_1,u_2\right)\\
        &=\mu_2(\mtx{u}),
    \end{align*}
    which implies $\mu_2$ is the invariant distribution of $\mtx{P}_2$. 

    When $\mtx{P}$ is reversible, since \eqref{eqt: reversibility of tildeP_2} implies that $\tilde{\mu}_2$ is an invariant distribution of $\tilde{\mtx{P}}_2$, we only need to show the reversibility of $\tilde{\mtx{P}}_2$ under $\tilde{\mu}_2$. Notice that \eqref{eqt: reversibility of tildeP_2} always holds for $\tilde{\mtx{u}}=\tilde{\mtx{v}}$. On the other hand, for any $\tilde{\mtx{u}}\ne\tilde{\mtx{v}}$ which satisfy $\tilde{p}_2(\tilde{\mtx{u}},\tilde{\mtx{v}})\ne 0$, by the definition of $\tilde{\mtx{P}}_2$, we can assume $\tilde{\mtx{u}}\sim (u,w)$, $\tilde{\mtx{v}}\sim (v,w)$, $u,v,w\in \Omega$.  We then use the reversibility of $\mtx{P}$ \eqref{eqt:detailed balance} to deduce that
    \[\tilde{\mu}_2(\tilde{\mtx{u}})\tilde{p}_2(\tilde{\mtx{u}},\tilde{\mtx{v}})=\mu(u)p(u,w)p(w,v)=\mu(v)p(v,w)p(w,u)= \tilde{\mu}_2(\tilde{\mtx{v}})\tilde{p}_2(\tilde{\mtx{v}},\tilde{\mtx{u}}),\]
    which implies the reversibility of $\tilde{\mtx{P}}_2$.
\end{proof}

\subsection{Unbiased estimation}
Now that we have formally introduced the induced Markov chains on path spaces, we can elaborate the connection between MLE and $\mtx{P}_2$. By \eqref{eqt:formula of MLE} and the definition of $\mtx{P}_2$, the MLE estimator $\mtx{R}_n=[r_n(u,v)]_{u,v\in\Omega}$ can be identified as a matrix empirical mean generated by $\mtx{P}_2$:
\[\mtx{R}_n=\frac{1}{n}\sum\limits_{k=1}^n\mtx{F}(\mtx{u}_k),\]
where $\mtx{F}:\Omega^2\rightarrow \R^{d\times d} $ is a matrix-valued function defined by
\[\mtx{F}(\mtx{w}):=\mtx{E}_{w_1w_2},\quad \text{for any} \,\ \mtx{w}=(w_1,w_2)\in\Omega^2.\]
That is, the $(u,v)$ entry of $\mtx{F}$ is just the indicator function $\mathbb{1}_{(u,v)}$ of the pair $(u,v)\in \Omega^2$, i.e. $\mtx{F}=[\mathbb{1}_{(u,v)}]_{u,v\in \Omega}$.

Our SCE is associated with $\tilde{\mtx{P}}_2$ in a similar way. More precisely, given a Markov sequence $\{\tilde{\mtx{u}}_k\}_{k\geq 1}$ generated by $\tilde{\mtx{P}}_2$, we can approximate $\mtx{D}_{\mu}\mtx{P}$ using the corresponding matrix empirical mean  
\[\mtx{H}_n=\frac{1}{n}\sum\limits_{k=1}^n\tilde{\mtx{F}}(\tilde{\mtx{u}}_k),\]
where $\tilde{\mtx{F}}:\tilde{\Omega}^2 \rightarrow \mathbb{H}_d $ is a matrix-valued function defined by
\[\tilde{\mtx{F}}(\tilde{\mtx{w}}):=\frac{1}{2}\left(\mtx{E}_{w_1w_2}+\mtx{E}_{w_2w_1}\right), \quad \text{for any}\,\ \tilde{\mtx{w}}\sim (w_1,w_2).\]
Let $\tilde{f}_{uv}$ denote the $(u,v)$ entry of $\tilde{\mtx{F}}$. Then
\[\tilde{f}_{uv}=\tilde{f}_{vu}=
    \begin{cases}
      \frac{1}{2}\mathbb{1}_{\widetilde{(u,v)}},  &\text{if}\,\ u\ne v,\\
      \mathbb{1}_{\widetilde{(u,v)}},   &\text{if}\,\ u=v,
    \end{cases}\]
where $\mathbb{1}_{\widetilde{(u,v)}}$ is the indicator function of the symmetric pair $\widetilde{(u,v)}$.

The next lemma verifies that MLE and SCE are unbiased when the corresponding Markov chains on path spaces start from their invariant distributions.
\begin{lemma}\label{lem:unbiased of F and tildeF}
    It holds that
    \begin{equation}\label{eqt:expectation of F}
        \mathbb{E}_{\mu_2}\left[\mtx{F}\right]=\mtx{D}_{\mu}\mtx{P}.
    \end{equation}
    Furthermore, when $\mtx{P}$ is reversible,
    \begin{equation}\label{eqt:expectation of tildeF}
        \mathbb{E}_{\tilde{\mu}_2}\left[\tilde{\mtx{F}}\right]=\mtx{D}_{\mu}\mtx{P}.
    \end{equation}
\end{lemma}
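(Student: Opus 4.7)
The plan is to prove both identities by a direct entry-wise calculation, since $\mtx{F}$ and $\tilde{\mtx{F}}$ are defined entry-wise as (scaled) indicators of pairs in $\Omega^2$ and $\tilde{\Omega}^2$ respectively, and $\mu_2$, $\tilde{\mu}_2$ are the explicitly-given distributions on those spaces.

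For the first identity, I would unpack the definition: the $(u,v)$ entry of $\mtx{F}(\mtx{w})$ is $\mathbb{1}_{(u,v)}(\mtx{w}) = \mathbb{1}\{w_1 = u,\, w_2 = v\}$. Taking expectation under $\mu_2$ then gives $\mathbb{E}_{\mu_2}[\mathbb{1}_{(u,v)}] = \mu_2(u,v) = \mu(u)\,p(u,v)$, which is exactly the $(u,v)$ entry of $\mtx{D}_\mu \mtx{P}$. This step requires nothing beyond the definition of $\mu_2$ from Subsection~\ref{subsubsec:construction of P2}.

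For the second identity, I would split into the two cases appearing in the piecewise definition of $\tilde{f}_{uv}$. For $u \neq v$, the equivalence class $\widetilde{(u,v)}$ contains exactly the two distinct paths $(u,v)$ and $(v,u)$, so
\begin{equation}
\mathbb{E}_{\tilde{\mu}_2}\bigl[\tilde{f}_{uv}\bigr] = \tfrac{1}{2}\tilde{\mu}_2\bigl(\widetilde{(u,v)}\bigr) = \tfrac{1}{2}\bigl(\mu(u)p(u,v) + \mu(v)p(v,u)\bigr),
\end{equation}
and invoking the detailed balance condition \eqref{eqt:detailed balance} collapses this to $\mu(u)p(u,v)$. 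For $u = v$ the class $\widetilde{(u,u)}$ is a singleton and the factor $1/2$ is absent, so $\mathbb{E}_{\tilde{\mu}_2}[\tilde{f}_{uu}] = \tilde{\mu}_2(\widetilde{(u,u)}) = \mu(u)p(u,u)$ directly, with no reversibility needed. In both cases one recovers the $(u,v)$ entry of $\mtx{D}_\mu \mtx{P}$.

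There is no real obstacle here; the only subtlety worth flagging is the bookkeeping between $\Omega^2$ and $\tilde{\Omega}^2$, namely that the factor $1/2$ in the off-diagonal definition of $\tilde{f}_{uv}$ is precisely what compensates for the fact that $\tilde{\mu}_2(\widetilde{(u,v)})$ pools the two ordered paths, while in the diagonal case $u=v$ no such compensation is needed. Handling these two cases separately and applying \eqref{eqt:detailed balance} in the off-diagonal case completes the argument.
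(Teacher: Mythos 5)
Your proposal is correct and follows essentially the same route as the paper: an entry-wise computation giving $\mathbb{E}_{\mu_2}[\mathbb{1}_{(u,v)}]=\mu_2(u,v)=\mu(u)p(u,v)$ for the first identity, and a case split on $u\neq v$ versus $u=v$ with the detailed balance condition \eqref{eqt:detailed balance} applied in the off-diagonal case for the second. Your remark that the factor $1/2$ in $\tilde f_{uv}$ exactly compensates for $\tilde\mu_2$ pooling the two ordered paths matches the paper's computation precisely.
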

\begin{proof}
    We use the definition of $\mtx{F}=[\mathbb{1}_{(u,v)}]_{u,v\in \Omega}$ to compute that
    \[\mathbb{E}_{\mu_2}\left[\mathbb{1}_{(u,v)}\right]=\mu_2(u,v)=\mu(u)p(u,v),\]
    which is the entry-wise form of \eqref{eqt:expectation of F}.

    When $\mtx{P}$ satisfies the detailed balance condition \eqref{eqt:detailed balance}, for any $u,v\in \Omega$ such that $u\ne v$, we have
    \[\E_{\tilde{\mu}_2}\left[\tilde{f}_{uv}\right]=\frac{1}{2}\tilde{\mu}_2(\widetilde{(u,v)})=\frac{1}{2}(\mu(u)p(u,v)+\mu(v)p(v,u))=\mu(u)p(u,v).\]
    Moreover, for any $u\in \Omega$,
    \[\E_{\tilde{\mu}_2}\left[\tilde{f}_{uu}\right]=\tilde{\mu}_2(\widetilde{(u,u)})=\mu(u)p(u,u).\]
    The above calculations imply \eqref{eqt:expectation of tildeF}.
\end{proof}

In the next section, we will study the convergence of $\mtx{R}_n$ and $\mtx{H}_n$ and establish non-asymptotic error bounds for MLE and SCE, before which we respectively propose the detailed frameworks of the two methods in Algorithm~\ref{alg:ML} and Algorithm~\ref{alg:SC} as follows.
\begin{algorithm}[] 
    \caption{Maximal Likelihood Estimation}\label{alg:ML} 
    \LinesNumbered 
    \KwIn{A sequence $\{u_k\}$ generated by $\mtx{P}$ with length $N+1$;}
    % \KwOut{output result}
    $k=1$\;
    $\mtx{R}=\sum\limits_{k=1}^N(\mtx{E}_{u_k u_{k+1}})/N$\;
    $\hat{\mu}=\mtx{R}\mtx{1}$\;
    $\hat{\mtx{P}}=\mtx{D}_{\hat{\mu}}^{-1}\mtx{R}$
\end{algorithm}

\begin{algorithm}[] 
    \caption{Symmetric Counting Estimation}\label{alg:SC} 
    \LinesNumbered 
    \KwIn{An oracle $\mathcal{P}$ of a reversible Markov chain, initial distribution $\nu$, total iteration number $N$;}
    % \KwOut{output result}
    $k=1$\;
    generate $u$ according to $\nu$\;
    $v=\mathcal{P}(u)$\;
    $\mtx{H}=(\mtx{E}_{uv}+\mtx{E}_{vu})/(2N)$\;
    \For{$k=2:N$}{
        generate $r \sim \mathcal{U}[0,1]$\;
       \If{$r\leq \frac{1}{2}$}
       {$u\leftarrow \mathcal{P}(v)$;}
       \Else{$v\leftarrow \mathcal{P}(u)$;}
       $\mtx{H}\leftarrow \mtx{H}+(\mtx{E}_{uv}+\mtx{E}_{vu})/(2N)$\;
    }
    $\hat{\mu}=\mtx{H}\mtx{1}$\;
    $\hat{\mtx{P}}=\mtx{D}_{\hat{\mu}}^{-1}\mtx{H}$
\end{algorithm}

\subsection{Spectral properties of \texorpdfstring{$\mtx{P}_2$}{$P_2$} and \texorpdfstring{$\tilde{\mtx{P}}_2$}{$\tilde{P}_2$}}\label{properties of P_2}
In order to derive non-asymptotic error bounds for MLE and SCE, we need to study the spectral gaps of $\mtx{P}_2$ and $\tilde{\mtx{P}}_2$, respectively. To start, the next theorem focuses on the spectral properties of $\mtx{P}_2$. We will show that $\mtx{P}$ and $\mtx{P}_2$ share the same nonzero eigenvalues with algebraic multiplicity accounted by decomposing $\mtx{P}_2$ as the product of two low-rank matrices according to the construction of $\mtx{P}_2$.
We further show that the pseudo spectral gap, the symmetric spectral gap and the IP gap of $\mtx{P}_2$ can all be bounded from below in terms of the corresponding spectral gaps of $\mtx{P}$, however, the absolute spectral gap of $\mtx{P}_2$ is always zero.

\begin{theorem}\label{thm:eigenvalue of P_2}
    Let $\mtx{P}$ be an irreducible transition matrix on $\Omega$ with an invariant distribution $\mu$. The followings hold:
    \begin{enumerate}
        \item $\mtx{P}$ and $\mtx{P}_2$ share the same nonzero eigenvalues with algebraic multiplicity accounted.
        \item For every $k\in\mathbb{N}_+$, $\mtx{P}^{k-1}$ and $\mtx{P}_2^k$ share the same nonzero singular values (with respect to $L_{2,\mu}$ and $L_{2,\mu_2}$, respectively) with multiplicity accounted. As a result, $\eta_{a}(\mtx{P}_2)\equiv0$, $\eta_{ps}(\mtx{P}_2)\geq \eta_{ps}(\mtx{P})/2$.
        \item $ \eta_s(\mtx{P}_2)\geq\eta_s(\mtx{P})/2>0$, and hence, $\mu_2$ is the unique invariant distribution of $\mtx{P}_2$. Moreover, when $\mtx{P}$ is reversible, any nonzero eigenvalue of $(\mtx{P}_2+\mtx{P}^*_2)/2$ must be a nonzero eigenvalue of $(\mtx{P}+\mtx{\Id})/2$ or $(\mtx{P}-\mtx{\Id})/2$ with multiplicity accounted, and vice versa. As a result, $\eta_s(\mtx{P}_2)=\eta(\mtx{P})/2$.
        \item  $\eta_p(\mtx{P}_2)\geq\eta_p(\mtx{P})/(1+\eta_p(\mtx{P}))>0$.
    \end{enumerate}
\end{theorem}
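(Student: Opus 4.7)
The whole proof rests on the factorization $\mtx{P}=\mtx{B}\mtx{A}$, $\mtx{P}_2=\mtx{A}\mtx{B}$, where $\mtx{A}:L_{2,\mu}\to L_{2,\mu_2}$ is the lift $(\mtx{A} f)(u_1,u_2)=f(u_2)$ and $\mtx{B}:L_{2,\mu_2}\to L_{2,\mu}$ is the contraction $(\mtx{B} h)(w)=\sum_v p(w,v)h(w,v)$. Direct calculation gives $\mtx{A}^*\mtx{A}=\mtx{I}$ (so $\mtx{A}$ is an $L_2$-isometry), $\mtx{B}\mtx{B}^*=\mtx{I}$, and $(\mtx{B}^*g)(u_1,u_2)=g(u_1)$; these four identities drive every claim. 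Part (1) is immediate: any eigenpair $(\lambda,\phi)$ of $\mtx{P}$ yields $\mtx{P}_2(\mtx{A}\phi)=\lambda(\mtx{A}\phi)$, and Sylvester's determinantal identity $\det(\lambda\mtx{I}-\mtx{A}\mtx{B})=\lambda^{d^2-d}\det(\lambda\mtx{I}-\mtx{B}\mtx{A})$ matches the algebraic multiplicities. For part (2), the iterate $\mtx{P}_2^k=\mtx{A}\mtx{P}^{k-1}\mtx{B}$ produces $(\mtx{P}_2^k)^*\mtx{P}_2^k=\mtx{B}^*(\mtx{P}^{k-1})^*\mtx{P}^{k-1}\mtx{B}$; a second Sylvester application together with $\mtx{B}\mtx{B}^*=\mtx{I}$ identifies its nonzero eigenvalues, and hence the nonzero singular values of $\mtx{P}_2^k$, with those of $\mtx{P}^{k-1}$. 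The explicit witness $h(u_1,u_2)=f(u_1)$ with $f\in L_{2,\mu}^0$ satisfies $\|\mtx{P}_2 h\|_{\mu_2}=\|h\|_{\mu_2}$, giving $\eta_a(\mtx{P}_2)=0$; the pseudo-gap inequality then follows from $\lambda_a(\mtx{P}_2^{k+1})=\lambda_a(\mtx{P}^k)$ and the elementary bound $1/(k+1)\geq 1/(2k)$.

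The heart of part (4) is the algebraic identity
\[
\mtx{P}_2(\mtx{I}-\mtx{P}_2)h=\mtx{A}\mtx{B} h-\mtx{A}\mtx{P}\mtx{B} h=\mtx{A}(\mtx{I}-\mtx{P})\mtx{B} h,\qquad h\in L_{2,\mu_2}.
\]
Since $\mtx{A}$ is an isometry and $\mtx{B} h\in L_{2,\mu}^0$ whenever $h\in L_{2,\mu_2}^0$, the IP-gap inequality for $\mtx{P}$ yields
\[
\|(\mtx{I}-\mtx{P}_2)h\|_{\mu_2}\geq\|\mtx{P}_2(\mtx{I}-\mtx{P}_2)h\|_{\mu_2}=\|(\mtx{I}-\mtx{P})\mtx{B} h\|_\mu\geq\eta_p(\mtx{P})\|\mtx{B} h\|_\mu=\eta_p(\mtx{P})\|\mtx{P}_2 h\|_{\mu_2}.
\]
Chaining with the reverse triangle inequality $\|\mtx{P}_2 h\|_{\mu_2}\geq\|h\|_{\mu_2}-\|(\mtx{I}-\mtx{P}_2)h\|_{\mu_2}$ and rearranging gives exactly $(1+\eta_p(\mtx{P}))\|(\mtx{I}-\mtx{P}_2)h\|_{\mu_2}\geq\eta_p(\mtx{P})\|h\|_{\mu_2}$, which is part (4). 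Strict positivity is inherited from $\eta_p(\mtx{P})\geq\eta_s(\mtx{P})>0$, the latter from Perron--Frobenius applied to the self-adjoint irreducible operator $(\mtx{P}+\mtx{P}^*)/2$.

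For part (3), in the reversible case use detailed balance to verify $\mtx{P}_2^*\mtx{A}\phi_i=\mtx{B}^*\phi_i$ and $\mtx{P}_2^*\mtx{B}^*\phi_i=\lambda_i\mtx{B}^*\phi_i$, so the 2D subspaces $V_i=\operatorname{span}(\mtx{A}\phi_i,\mtx{B}^*\phi_i)$ are $(\mtx{P}_2+\mtx{P}_2^*)/2$-invariant; since $\langle\mtx{A}\phi_i,\mtx{B}^*\phi_j\rangle_{\mu_2}=\lambda_i\delta_{ij}$, diagonalizing each $2\times 2$ block produces eigenvalues $(\lambda_i\pm 1)/2$. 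The orthogonal complement of $\bigoplus_i V_i$ coincides with $\ker\mtx{A}^*\cap\ker\mtx{B}$, on which both $\mtx{P}_2$ and $\mtx{P}_2^*$ vanish, giving the full spectrum description and in particular $\eta_s(\mtx{P}_2)=\eta(\mtx{P})/2$. In the general (possibly non-reversible) case, decompose $h=\mtx{A} f+h_1$ with $h_1\perp\operatorname{Im}(\mtx{A})$, obtain $\langle h,\mtx{P}_2 h\rangle_{\mu_2}=\langle f,\mtx{P} f\rangle_\mu+\langle\mtx{B}^*f,h_1\rangle_{\mu_2}$, and sharpen Cauchy--Schwarz by projecting $\mtx{B}^*f$ onto $\ker\mtx{A}^*$:
\[
|\langle\mtx{B}^*f,h_1\rangle_{\mu_2}|\leq\sqrt{\|f\|_\mu^2-\|\mtx{P}^*f\|_\mu^2}\;\|h_1\|_{\mu_2}.
\]
Combined with the Cauchy--Schwarz consequence $\|\mtx{P}^*f\|_\mu\geq|\langle f,\mtx{P} f\rangle_\mu|/\|f\|_\mu$, a one-variable optimization in $t=\|h_1\|/\|f\|$ (with extremizer $t=\sqrt{(1-\lambda_s(\mtx{P}))/(1+\lambda_s(\mtx{P}))}$) produces the sharp bound $\lambda_s(\mtx{P}_2)\leq(1+\lambda_s(\mtx{P}))/2$, i.e., $\eta_s(\mtx{P}_2)\geq\eta_s(\mtx{P})/2$. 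Uniqueness of $\mu_2$ follows from part (1), since the eigenvalue $1$ of $\mtx{P}_2$ inherits the simple multiplicity it has for $\mtx{P}$.

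The main obstacle is the non-reversible case of (3): a direct estimate $|\langle f,\mtx{B} h_1\rangle|\leq\|f\|\|h_1\|$ (using only $\|\mtx{B}\|\leq 1$) gives only $\eta_s(\mtx{P}_2)\geq\eta_s(\mtx{P})/(1+\eta_s(\mtx{P}))$, so recovering the sharp factor $1/2$ demands the projection refinement above, which exploits the constraint $\mtx{A}^*h_1=0$ not visible at the level of norms alone.
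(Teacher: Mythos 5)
Your proof is correct, and its skeleton is the paper's: your lift/conditional-expectation pair $\mtx{A},\mtx{B}$ with $\mtx{P}_2=\mtx{A}\mtx{B}$, $\mtx{P}=\mtx{B}\mtx{A}$ is exactly the paper's factorization $\mtx{P}_2=\mtx{S}\mtx{T}$, $\mtx{P}=\mtx{T}\mtx{S}$, and your identities $\mtx{A}^*\mtx{A}=\mtx{I}$, $\mtx{B}\mtx{B}^*=\mtx{I}$ are the paper's $\mtx{S}_1^{\mathrm{T}}\mtx{S}_1=\mtx{T}_1\mtx{T}_1^{\mathrm{T}}=\mtx{\Id}$. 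Parts (1), (2) and (4) then coincide with the paper's arguments essentially verbatim; in particular your identity $\mtx{P}_2(\mtx{I}-\mtx{P}_2)h=\mtx{A}(\mtx{I}-\mtx{P})\mtx{B}h$ is the operator form of the paper's computation with $h_1=\mtx{B}h$, combined with the same reverse triangle inequality. Where you genuinely diverge is part (3). The paper sets $\mtx{K}=(\mtx{S}_1\mtx{T}_1+\mtx{T}_1^{\mathrm{T}}\mtx{S}_1^{\mathrm{T}})/2$, $\mtx{L}=(\mtx{T}_1^{\mathrm{T}}\mtx{T}_1+\mtx{S}_1\mtx{S}_1^{\mathrm{T}})/2$, sandwiches $\mtx{K}$ between the semidefinite matrices $(\mtx{K}\pm\mtx{L})/2$, and uses eigenvalue monotonicity plus the cyclic identity to reach $\lambda_2(\mtx{K})\leq\lambda_s((\mtx{P}+\mtx{\Id})/2)$; in the reversible case it proves $\mtx{K}^2=\mtx{L}^2$, so $\mtx{L}$ is the positive square root of $\mtx{K}^2$ and the positive/negative parts of $\mtx{K}$ are read off from $(\mtx{K}\pm\mtx{L})/2$. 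You instead decompose $L_{2,\mu_2}=\operatorname{Im}(\mtx{A})\oplus\ker(\mtx{A}^*)$, sharpen Cauchy--Schwarz by projecting $\mtx{B}^*f$ off $\operatorname{Im}(\mtx{A})$ (using $\mtx{A}^*\mtx{B}^*=\mtx{P}^*$), and run a one-variable optimization; in the reversible case you exhibit the explicit $(\mtx{P}_2+\mtx{P}_2^*)/2$-invariant two-dimensional blocks $V_i$. I checked the optimization: the critical point $t=\sqrt{(1-a)/(1+a)}$ indeed yields the value $(1+a)/2$, which is monotone in $a$, so the bound $\lambda_s(\mtx{P}_2)\leq(1+\lambda_s(\mtx{P}))/2$ follows; both routes give the same constant. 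Your version is more computational but more explicit (it produces the eigenvectors and, in the reversible case, the full spectrum directly), while the paper's is a slicker operator-inequality argument. Two small remarks: you should note the degenerate case $\lambda_i=\pm1$, where $\mtx{A}\phi_i=\pm\mtx{B}^*\phi_i$ and $V_i$ collapses to dimension one --- the correspondence of \emph{nonzero} eigenvalues survives because the missing eigenvalue $(\lambda_i\mp1)/2$ is then zero; and your derivation of the uniqueness of $\mu_2$ (simplicity of the eigenvalue $1$ of $\mtx{P}_2$ via part (1) and Perron--Frobenius applied to $\mtx{P}$) is actually more direct than the paper's route through $\eta_s(\mtx{P}_2)>0$.
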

\begin{proof}
    From the construction of $\mtx{P}_2$ we find that $\mtx{P}_2$ can be decomposed as the product of two transition matrices that represent step \ref{construction of P2:step 1} and step \ref{construction of P2:step 2} introduced in Subsection~\ref{subsubsec:construction of P2}, respectively.
    Let $\mtx{S}$ denote the transition matrix of step \ref{construction of P2:step 1} in which a given state $\mtx{u}=(u_1,u_2)\in \Omega^2$ will jump to $u_2$ directly, i.e. for any $\mtx{u}=(u_1,u_2)\in \Omega^2$, $v\in \Omega$,
    \[\mtx{S}(\mtx{u},v)=\delta_{u_2v},\]
    where $\delta_{uv}$ represents the Dirac symbol:
    \[\delta_{uv}=
    \begin{cases} 
        1, \quad \text{if}\,\ u=v,\\
        0, \quad \text{if}\,\ u\ne v.
    \end{cases}\]
    Let $\mtx{T}$ denote the transition matrix of step \ref{construction of P2:step 2} in which a given state $u\in \Omega$ will jump to the state $(u,v)\in \Omega^2$ according to the probability 
    \[\mathbb{P}\left\{\text{choose } v \right\}=p(u,v),\]
    i.e. for any $u\in \Omega$, $\mtx{v}=(v_1,v_2)\in \Omega^2$,  
    \[\mtx{T}(u,\mtx{v})=p(u,v_2)\delta_{uv_1}=p(v_1,v_2)\delta_{uv_1}.\]
    By the definition of $\mtx{P}_2$ we know $\mtx{P}_2=\mtx{S}\mtx{T}$. On the other hand, we find that, for any $u,v\in\Omega$,
    \[\mtx{T}\mtx{S}(u,v)=\sum\limits_{\mtx{u}\in\Omega^2}\mtx{T}(u,\mtx{u})\mtx{S}(\mtx{u},v)=p(u,v),\]
    that is, $\mtx{T}\mtx{S}=\mtx{P}$. We therefore deduce that $\mtx{P}$ and $\mtx{P}_2$ share the same nonzero eigenvalues from the basic fact that $\mtx{S}\mtx{T}$ and $\mtx{T}\mtx{S}$ share the same nonzero eigenvalues.

    In order to obtain the absolute spectral gap and pseudo spectral gap of $\mtx{P}_2$, we need to study the singular values of the matrix ($\mtx{D}_{\mu_2}^{1/2}\mtx{P}_2\mtx{D}_{\mu_2}^{-1/2})^k $. Let $\mtx{S}_1=\mtx{D}_{\mu_2}^{1/2}{\mtx{S}}\mtx{D}_{\mu}^{-1/2}$ and $\mtx{T}_1=\mtx{D}_{\mu}^{1/2}{\mtx{T}}\mtx{D}_{\mu_2}^{-1/2}$. By the definitions of $\mtx{S}_1$ and $\mtx{T}_1$ we have
    \[\mtx{S}_1\mtx{T}_1 =\mtx{D}_{\mu_2}^{1/2}\mtx{P}_2\mtx{D}_{\mu_2}^{-1/2} ,\quad \mtx{T}_1\mtx{S}_1=\mtx{D}_{\mu}^{1/2}\mtx{P}\mtx{D}_{\mu}^{-1/2}.\]
    For any $u,v\in \Omega$, we use the definition of the invariant distribution $\mu$ \eqref{eqt:invariant distribution} to compute that
    \begin{align*}
        \left({\mtx{S}_1}^{\mathrm{T}}{\mtx{S}_1}\right)(u,v)&=\mu(u)^{-1/2}\mu(v)^{-1/2}\sum\limits_{\mtx{u}=(u_1,u_2)\in {\Omega}^2}{\mtx{S}}(\mtx{u},u){\mu}_2(\mtx{u}){\mtx{S}}(\mtx{u},v)\\
        &= \mu(u)^{-1/2}\mu(v)^{-1/2} \sum\limits_{\mtx{u}=(u_1,u_2)\in {\Omega}^2} \delta_{u_2u}\delta_{u_2v} {\mu}_2(\mtx{u})\\
        & =\delta_{uv}\mu(u)^{-1}\sum\limits_{u_1\in {\Omega}}\mu(u_1)p(u_1,u)\\
        &=\delta_{uv},
    \end{align*}
    and 
    \begin{align*}
        \left({\mtx{T}_1}{\mtx{T}_1}^{\mathrm{T}}\right)(u,v)&=\mu(u)^{1/2}\mu(v)^{1/2}\sum\limits_{\mtx{u}=(u_1,u_2)\in {\Omega}^2}{\mtx{T}}(u,\mtx{u}){\mu}_2(\mtx{u})^{-1}{\mtx{T}}(v,\mtx{u})\\
        &=\mu(u)^{1/2}\mu(v)^{1/2}\sum\limits_{\mtx{u}=(u_1,u_2)\in {\Omega}^2}\delta_{uu_1}\delta_{vu_1}p(u,u_2)p(v,u_2){\mu}_2(\mtx{u})^{-1}\\
        &=\delta_{uv}\mu(u)\sum\limits_{u_2\in \Omega}\frac{p(u,u_2)^2}{\mu(u)p(u,u_2)}\\
        &=\delta_{uv}.
    \end{align*}
    We thus deduce $\mtx{S}_1^{\mathrm{T}}\mtx{S}_1=\mtx{T}_1\mtx{T}_1^{\mathrm{T}}=\mtx{\Id}$.
 Note that, for all $k\in \N_+$, \[\left(\mtx{D}_{\mu_2}^{1/2}\mtx{P}_2^k\mtx{D}_{\mu_2}^{-1/2}\right)^{\mathrm{T}}\left(\mtx{D}_{\mu_2}^{1/2}\mtx{P}_2^k\mtx{D}_{\mu_2}^{-1/2}\right)= \left(\mtx{T}_1^{\mathrm{T}}\mtx{S}_1^{\mathrm{T}}\right)^{k}\left(\mtx{S}_1\mtx{T}_1\right)^{k} \]
 and \begin{align*}
    \left(\mtx{D}_{\mu}^{1/2}\mtx{P}^{k-1}\mtx{D}_{\mu}^{-1/2}\right)^{\mathrm{T}}\left(\mtx{D}_{\mu}^{1/2}\mtx{P}^{k-1}\mtx{D}_{\mu}^{-1/2}\right)& =\left(\mtx{S}_1^{\mathrm{T}}\mtx{T}_1^{\mathrm{T}}\right)^{k-1}\left(\mtx{T}_1\mtx{S}_1\right)^{k-1}\\
    &= \mtx{T}_1\mtx{T}_1^{\mathrm{T}}\left(\mtx{S}_1^{\mathrm{T}}\mtx{T}_1^{\mathrm{T}}\right)^{k-1}\mtx{S}_1^{\mathrm{T}}\mtx{S}_1 \left(\mtx{T}_1\mtx{S}_1\right)^{k-1}\\
    &=\mtx{T}_1\left(\mtx{T}_1^{\mathrm{T}}\mtx{S}_1^{\mathrm{T}}\right)^{k}\left(\mtx{S}_1\mtx{T}_1\right)^{k-1}\mtx{S}_1.
 \end{align*}
 Since $\left(\mtx{T}_1^{\mathrm{T}}\mtx{S}_1^{\mathrm{T}}\right)^{k}\left(\mtx{S}_1\mtx{T}_1\right)^{k}$ and $\mtx{T}_1\left(\mtx{T}_1^{\mathrm{T}}\mtx{S}_1^{\mathrm{T}}\right)^{k}\left(\mtx{S}_1\mtx{T}_1\right)^{k-1}\mtx{S}_1$ share the same nonzero eigenvalues, we deduce that $\mtx{P}^{k-1}$ and $\mtx{P}_2^k$ share the same nonzero singular values (with respect to $L_{2,\mu}$ and $L_{2,\mu_2}$, respectively). We therefore have \[\lambda_a\left(\mtx{P}_2^k\right)=\lambda_a\left(\mtx{P}^{k-1}\right).\]
    In particular, by taking $k=1$, we deduce $\lambda_a(\mtx{P}_2)=1$ and $\eta_a(\mtx{P}_2)=1-\lambda_a(\mtx{P}_2)=0$. By the definition of the pseudo spectral gap, we know that \[\eta_{ps}(\mtx{P})=\sup_{k\in \N_+} \frac{1}{k}\left(1-\lambda_a^2\left(\mtx{P}^k\right)\right).\]
    Suppose the above supremum is attained at $k_0\in [1,+\infty]$, we have
    \[\eta_{ps}(\mtx{P}_2)\geq \frac{1}{k_0+1}\left(1-\lambda_a^2\left(\mtx{P}_2^{k_0+1}\right)\right)=\frac{1}{k_0+1}\left(1-\lambda_a^2\left(\mtx{P}^{k_0}\right)\right)=\frac{k_0}{k_0+1}\eta_{ps}(\mtx{P})\geq \frac{\eta_{ps}(\mtx{P})}{2},\]
    which is the desired result.

    Next, we turn to study the symmetric spectral gap of $\mtx{P}_2$. Recall that $\lambda_s(\mtx{P}_2)$ is the second largest eigenvalue of the matrix $(\mtx{S}_1\mtx{T}_1+\mtx{T}_1^{\mathrm{T}}\mtx{S}_1^{\mathrm{T}})/2=:\mtx{K}$. Let us also define $\mtx{L}:= (\mtx{T}_1^{\mathrm{T}}\mtx{T}_1+\mtx{S}_1\mtx{S}_1^{\mathrm{T}})/2$. To study the spectrum of $\mtx{K}$, we first compute that
    \[\frac{1}{2}\left(\mtx{K}+\mtx{L}\right) =\frac{1}{4}\left(\mtx{S}_1\mtx{T}_1+\mtx{T}_1^{\mathrm{T}}\mtx{S}_1^{\mathrm{T}}\right)+\frac{1}{4}\left(\mtx{T}_1^{\mathrm{T}}\mtx{T}_1+\mtx{S}_1\mtx{S}_1^{\mathrm{T}}\right)=\frac{1}{4}\left(\mtx{T}_1^{\mathrm{T}}+\mtx{S}_1\right)\left(\mtx{T}_1+\mtx{S}_1^{\mathrm{T}}\right) \succeq 0,\]
    and
    \[\frac{1}{2}\left(\mtx{K}-\mtx{L}\right)=\frac{1}{4}\left(\mtx{S}_1\mtx{T}_1+\mtx{T}_1^{\mathrm{T}}\mtx{S}_1^{\mathrm{T}}\right)-\frac{1}{4}\left(\mtx{T}_1^{\mathrm{T}}\mtx{T}_1+\mtx{S}_1\mtx{S}_1^{\mathrm{T}}\right)=-\frac{1}{4}\left(\mtx{T}_1^{\mathrm{T}}-\mtx{S}_1\right)\left(\mtx{T}_1-\mtx{S}_1^{\mathrm{T}}\right)\preceq 0.\]
    As a result, we have 
    \[\frac{1}{2}\left(\mtx{K}-\mtx{L}\right)\preceq \mtx{K}=  \frac{1}{2}\left(\mtx{K}-\mtx{L}\right)+ \frac{1}{2}\left(\mtx{K}+\mtx{L}\right)\preceq \frac{1}{2}\left(\mtx{K}+\mtx{L}\right).\]
  This implies the eigenvalues of $\mtx{K}$, in non-decreasing order, are less than or equal to the corresponding eigenvalues of $(\mtx{K}+\mtx{L})/2$ and larger than or equal to those of $(\mtx{K}-\mtx{L})/2$.
    On the other hand, by exchanging the order of multiplication, we have
    \begin{equation}\label{eqt:P+I}
        \begin{aligned}
            \frac{1}{4}\left(\mtx{T}_1+\mtx{S}_1^{\mathrm{T}}\right)\left(\mtx{T}_1^{\mathrm{T}}+\mtx{S}_1\right)
            &=\frac{1}{4}\left(\mtx{T}_1\mtx{S}_1+\mtx{S}_1^{\mathrm{T}}\mtx{T}_1^{\mathrm{T}}\right)+\frac{1}{4}\left(\mtx{S}_1^{\mathrm{T}}\mtx{S}_1+\mtx{T}_1\mtx{T}_1^{\mathrm{T}}\right)\\
            &=\frac{1}{2}\left(\mtx{D}_{\mu}^{1/2}\frac{\mtx{P}+\mtx{\Id}}{2}\mtx{D}_{\mu}^{-1/2}+\mtx{D}_{\mu}^{-1/2}\frac{\mtx{P}^{\mathrm{T}}+\mtx{\Id}}{2}\mtx{D}_{\mu}^{1/2}\right),
        \end{aligned}
    \end{equation}
    and 
    \begin{equation}\label{eqt:P-I}
        -\frac{1}{4}\left(\mtx{T}_1-\mtx{S}_1^{\mathrm{T}}\right)\left(\mtx{T}_1^{\mathrm{T}}-\mtx{S}_1\right)=\frac{1}{2}\left(\mtx{D}_{\mu}^{1/2}\frac{\mtx{P}-\mtx{\Id}}{2}\mtx{D}_{\mu}^{-1/2}+\mtx{D}_{\mu}^{-1/2}\frac{\mtx{P}^{\mathrm{T}}-\mtx{\Id}}{2}\mtx{D}_{\mu}^{1/2}\right).
    \end{equation}
    For any real symmetric matrix $\mtx{M}$, we use $\lambda_2(\mtx{M})$ to denote the second largest eigenvalue of $\mtx{M}$. Since $(\mtx{T}_1+\mtx{S}_1^{\mathrm{T}})(\mtx{T}_1^{\mathrm{T}}+\mtx{S}_1)/4$ and $ (\mtx{T}_1^{\mathrm{T}}+\mtx{S}_1)(\mtx{T}_1+\mtx{S}_1^{\mathrm{T}})/4$ share the same nonzero eigenvalues, we deduce that
    \begin{align*}
        \lambda_s(\mtx{P}_2)&=\lambda_2(\mtx{K})\leq \lambda_2\left(\frac{\mtx{K}+\mtx{L}}{2}\right)=\lambda_2\left(\mtx{D}_{\mu}^{1/2}\frac{\mtx{P}+\mtx{\Id}}{4}\mtx{D}_{\mu}^{-1/2}+\mtx{D}_{\mu}^{-1/2}\frac{\mtx{P}^{\mathrm{T}}+\mtx{\Id}}{4}\mtx{D}_{\mu}^{1/2}\right)\\
        & = \lambda_s\left(\frac{\mtx{P}+\mtx{\Id}}{2}\right),
    \end{align*}
    and therefore
    \[\eta_s(\mtx{P}_2)=1-\lambda_s(\mtx{P}_2)\geq 1-\lambda_s\left(\frac{\mtx{P}+\mtx{\Id}}{2}\right)=\frac{\eta_s(\mtx{P})}{2}>0,\]
    where the last inequality is due to the irreducibility of $\mtx{P}$. As a direct corollary, we know $1$ is a simple eigenvalue of $\mtx{P}_2$ since
    \[\langle h, \left(\mtx{\Id}-\mtx{P}_2\right) h\rangle_{\mu_2}\geq \eta_s(\mtx{P}_2) \left\| h  \right\|_{\mu_2}^2>0, \qquad \text{for any } h\in L_{2,\mu_2}^0\backslash\{\mtx{0}\},\]
    which implies that $\mu_2$ is the unique invariant distribution of $\mtx{P}_2$.

    When $\mtx{P}$ is reversible, we have
    \[\mtx{T}_1\mtx{S}_1=\mtx{D}_{\mu}^{1/2}\mtx{P}\mtx{D}_{\mu}^{-1/2}=\mtx{D}_{\mu}^{-1/2}\mtx{P}^{\mathrm{T}}\mtx{D}_{\mu}^{1/2}=\mtx{S}_1^\mathrm{T}\mtx{T}_1^\mathrm{T},\]
    thus,
    \begin{align*}
        \mtx{K}^2
        &=\frac{1}{4}\left(\mtx{S}_1\mtx{T}_1\mtx{S}_1\mtx{T}_1+\mtx{S}_1\mtx{T}_1\mtx{T}_1^{\mathrm{T}}\mtx{S}_1^{\mathrm{T}}+\mtx{T}_1^{\mathrm{T}}\mtx{S}_1^{\mathrm{T}}\mtx{S}_1\mtx{T}_1+\mtx{T}_1^{\mathrm{T}}\mtx{S}_1^{\mathrm{T}}\mtx{T}_1^{\mathrm{T}}\mtx{S}_1^{\mathrm{T}}\right)\\
        &=\frac{1}{4}\left(\mtx{S}_1\mtx{T}_1\mtx{S}_1\mtx{T}_1+\mtx{T}_1^{\mathrm{T}}\mtx{S}_1^{\mathrm{T}}\mtx{T}_1^{\mathrm{T}}\mtx{S}_1^{\mathrm{T}}+\mtx{T}_1^{\mathrm{T}}\mtx{T}_1+\mtx{S}_1\mtx{S}_1^{\mathrm{T}}\right)\\
        &=\frac{1}{4}\left(\mtx{S}_1\mtx{S}_1^{\mathrm{T}}\mtx{T}_1^{\mathrm{T}}\mtx{T}_1+ \mtx{T}_1^{\mathrm{T}}\mtx{T}_1\mtx{S}_1\mtx{S}_1^{\mathrm{T}}  +   \mtx{T}_1^{\mathrm{T}}\mtx{T}_1\mtx{T}_1^{\mathrm{T}}\mtx{T}_1+\mtx{S}_1\mtx{S}_1^{\mathrm{T}}\mtx{S}_1\mtx{S}_1^{\mathrm{T}}\right)\\
        &=\mtx{L}^2.
    \end{align*}
    By the fact that $\mtx{K}\in \mathbb{H}_{d^2}$, $\mtx{L}\in \mathbb{H}_{d^2}^+$, we know $\mtx{L}$ must be the unique square root of $\mtx{K}^2$, which implies that if $(\lambda,\alpha)$ is an eigen-pair of $\mtx{K}$, then $(|\lambda|,\alpha)$ is an eigen-pair of $\mtx{L}$. Therefore, any nonzero eigenvalue of $\mtx{K}$ must be a nonzero eigenvalue of $(\mtx{K}+\mtx{L})/2$ or $(\mtx{K}-\mtx{L})/2$, and vice versa. Moreover, since $(\mtx{K}-\mtx{L})/2\preceq 0$ and $(\mtx{K}+\mtx{L})/2\succeq 0$, we know that if $\lambda_2((\mtx{K}+\mtx{L})/2)>0$, then it must also be the second largest eigenvalue of $\mtx{K}$. On the other hand, if $\lambda_2((\mtx{K}+\mtx{L})/2)=0$, then 1 is the only nonzero eigenvalue of $(\mtx{K}+\mtx{L})/2$, and thus $\operatorname{rank}((\mtx{K}+\mtx{L})/2)=1$. We also know that 
    \[\operatorname{rank}\left(\frac{\mtx{K}-\mtx{L}}{2}\right)= \operatorname{rank}\left(-\frac{1}{4}\left(\mtx{T}_1^{\mathrm{T}}-\mtx{S}_1\right)\left(\mtx{T}_1-\mtx{S}_1^{\mathrm{T}}\right)\right)\leq \operatorname{rank}\left(\mtx{T}_1^{\mathrm{T}}-\mtx{S}_1\right)\leq d,\]
    Therefore,
    \[\operatorname{rank}(\mtx{K})=\operatorname{rank}\left(\frac{\mtx{K}+\mtx{L}}{2}+\frac{\mtx{K}-\mtx{L}}{2}\right)\leq \operatorname{rank}\left(\frac{\mtx{K}+\mtx{L}}{2}\right)+\operatorname{rank}\left(\frac{\mtx{K}-\mtx{L}}{2}\right)\leq 1+d<d^2,\]
    which implies that 0 must be an eigenvalue of $\mtx{K}$. To summarize, the following equality always holds:
    \[\lambda_2(\mtx{K})=\lambda_2\left(\frac{\mtx{K}+\mtx{L}}{2}\right).\]
    To complete the proof, notice that in the reversible case, \eqref{eqt:P+I} and \eqref{eqt:P-I} respectively reduce to
    \begin{equation}\label{eqt:important relation}
        \frac{1}{4}\left(\mtx{T}_1+\mtx{S}_1^{\mathrm{T}}\right)\left(\mtx{T}_1^{\mathrm{T}}+\mtx{S}_1\right)=\mtx{D}_{\mu}^{1/2}\frac{\mtx{P}+\mtx{\Id}}{2}\mtx{D}_{\mu}^{-1/2},
    \end{equation}
    and 
    \[-\frac{1}{4}\left(\mtx{T}_1-\mtx{S}_1^{\mathrm{T}}\right)\left(\mtx{T}_1^{\mathrm{T}}-\mtx{S}_1\right)=\mtx{D}_{\mu}^{1/2}\frac{\mtx{P}-\mtx{\Id}}{2}\mtx{D}_{\mu}^{-1/2}.\]
    As a result, any nonzero eigenvalue of $\mtx{K}$ must be a nonzero eigenvalue of $(\mtx{P}+\mtx{\Id})/2$ or $(\mtx{P}-\mtx{\Id})/2$ with multiplicity accounted, and vice versa. Besides, by \eqref{eqt:important relation}, we have
    \[\lambda_s(\mtx{P}_2)=\lambda_2(\mtx{K})=\lambda_2\left(\frac{\mtx{K}+\mtx{L}}{2}\right)= \lambda\left(\frac{\mtx{P}+\mtx{\Id}}{2}\right),\]
    which implies $\eta_s(\mtx{P}_2)=\eta(\mtx{P})/2$.

    Finally, we study the IP gap of $\mtx{P}_2$. For any function $h\in L_{2,\mu_2}^0$, let $f:=(\mtx{\Id}-\mtx{P}_2)h$. Our goal is to bound $\|f\|_{\mu_2}/\|h\|_{\mu_2}$. Recall that $\{u_k\}_{k\geq 1}$ is the Markov chain generated by $\mtx{P}$ and $\{\mtx{u}_k\}_{k\geq 1}=\{(u_k,u_{k+1})\}_{k\geq 1}$ is the Markov chain generated by $\mtx{P}_2$. By the transition law of $\mtx{P}_2$, for any $\mtx{u}=(u,v)\in \Omega^2 $,
    \[\mtx{P}_2 h (\mtx{u})=\E [h(\mtx{u}_2)|\mtx{u}_1=\mtx{u}]=\E [h(v,u_3)|u_2=v]=:h_1(v).\]
    We deduce that \[f(u,v)=h(u,v)-h_1(v), \qquad \text{for any }u,v\in \Omega.\]
    We then fix the first variable $u$ and take expectation with respect to the second variable $v$ according to the transition law $p(u,v)$ to get
    \[\E[f(u,u_2)|u_1=u]= \E[h(u,u_2)|u_1=u]- \E[h_1(u_2)|u_1=u]=(\mtx{\Id}-\mtx{P})h_1(u).\]
    Since $\mtx{P}$ admits an IP gap $\eta_p(\mtx{P})>0$, we have
    \begin{align*}
        \|h_1\|_{\mu}&\leq \frac{1}{\eta_p(\mtx{P})}\|\E[f(\cdot,u_2)|u_1=\cdot]\|_{\mu}\\
        &=\frac{1}{\eta_p(\mtx{P})}\left(\E\left[\E[f(u,u_2)|u_1=u]^2|u\sim \mu\right]\right)^{1/2}\\
        &\leq \frac{1}{\eta_p(\mtx{P})}\left(\E\left[\E[f^2(u,u_2)|u_1=u]|u\sim \mu\right]\right)^{1/2}\\
        &=\frac{\|f\|_{\mu_2}}{\eta_p(\mtx{P})}.
    \end{align*}
    As a result,
    \[\|f\|_{\mu_2}=\|h-h_1\|_{\mu_2}\geq \|h\|_{\mu_2}-\|h_1\|_{\mu_2}=\|h\|_{\mu_2}-\|h_1\|_{\mu}\geq \|h\|_{\mu_2}-\frac{\|f\|_{\mu_2}}{\eta_p(\mtx{P})},\]
    which implies
    \[\frac{\|(\mtx{\Id}-\mtx{P}_2)h\|_{\mu_2}}{ \|h\|_{\mu_2}}=\frac{\|f\|_{\mu_2}}{ \|h\|_{\mu_2}}\geq \frac{\eta_p(\mtx{P})}{1+\eta_p(\mtx{P})}.\]
    This completes the proof.
\end{proof}

Next, we study the spectrum of $\tilde{\mtx{P}}_2$ when $\mtx{P}$ is reversible. Compared with Theorem~\ref{thm:eigenvalue of P_2}, we can obtain the spectral gaps of $\tilde{\mtx{P}}_2$ through a simpler approach since $\tilde{\mtx{P}}_2$ is also a reversible transition matrix. The next theorem shows that $\tilde{\mtx{P}}_2$ and $(\mtx{P}+\mtx{\Id})/2$ share the same nonzero eigenvalues with multiplicity accounted. As a result, the spectral gaps of $\tilde{\mtx{P}}_2$ can be obtained directly.
\begin{theorem}\label{thm:eigenvalue of tilde P2}
    Let $\mtx{P}$ be an irreducible reversible transition matrix on $\Omega$ with an invariant distribution $\mu$. Then $(\mtx{P}+\mtx{\Id})/2$ and $\tilde{\mtx{P}}_2$ share the same nonzero eigenvalues with multiplicity accounted. As a result, $\tilde{\mu}_2$ is the unique invariant distribution of $\tilde{\mtx{P}}_2$. Moreover, it holds that
    \[\eta_{a}\left(\tilde{\mtx{P}}_2\right)=\eta\left(\tilde{\mtx{P}}_2\right)=\frac{\eta(\mtx{P})}{2}>0.\]
\end{theorem}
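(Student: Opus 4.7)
The plan is to imitate the product-factorization approach used for $\mtx{P}_2$ in Theorem \ref{thm:eigenvalue of P_2}, but now exploiting the fact that the extra random-reversal step in the construction of $\tilde{\mtx{P}}_2$ produces a lazy chain on $\Omega$. Concretely, I will mirror steps \ref{construction of tildeP2:step 1}--\ref{construction of tildeP2:step 2} of Subsection \ref{subsubsec:construction of tildeP2} by transition matrices $\tilde{\mtx{S}} : \tilde{\Omega}^2 \to \Omega$ and $\tilde{\mtx{T}} : \Omega \to \tilde{\Omega}^2$. Here $\tilde{\mtx{S}}$ encodes ``randomly pick one endpoint of $\tilde{\mtx{u}}$'': for $\tilde{\mtx{u}} \sim (u_1,u_2)$ it sends mass $1/2$ to each of $u_1$ and $u_2$ (mass $1$ when $u_1=u_2$). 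The matrix $\tilde{\mtx{T}}$ encodes ``apply $\mtx{P}$ and pass to the equivalence class'': $\tilde{\mtx{T}}(u,\tilde{\mtx{v}}) = \sum_{v:\widetilde{(u,v)}=\tilde{\mtx{v}}} p(u,v)$. By construction, $\tilde{\mtx{P}}_2 = \tilde{\mtx{S}}\tilde{\mtx{T}}$.

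Next I will compute $\tilde{\mtx{T}}\tilde{\mtx{S}}$ directly. For fixed $u,v\in\Omega$, the sum over intermediate $\tilde{\mtx{w}}$ splits into the contribution from $\tilde{\mtx{w}} = \widetilde{(u,u)}$ and from $\tilde{\mtx{w}} = \widetilde{(u,w)}$ with $w\ne u$, and a short bookkeeping argument (sketched: case $v=u$ gives $p(u,u) + \tfrac{1}{2}(1-p(u,u)) = \tfrac{1}{2}(1+p(u,u))$; case $v\ne u$ gives $\tfrac{1}{2}p(u,v)$) yields $\tilde{\mtx{T}}\tilde{\mtx{S}} = (\mtx{P} + \mtx{\Id})/2$. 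Since $\tilde{\mtx{S}}\tilde{\mtx{T}}$ and $\tilde{\mtx{T}}\tilde{\mtx{S}}$ share the same nonzero eigenvalues with multiplicity, this establishes the first claim of the theorem.

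From this spectral identification the remaining claims follow mechanically. Because $\mtx{P}$ is reversible and irreducible, all eigenvalues of $\mtx{P}$ lie in $[-1,1]$ and so those of $(\mtx{P}+\mtx{\Id})/2$ lie in $[0,1]$ with $1$ simple. Consequently every nonzero eigenvalue of $\tilde{\mtx{P}}_2$ is nonnegative and $1$ is a simple eigenvalue, which (combined with Lemma \ref{reversibility of symmetric path space}) gives uniqueness of $\tilde{\mu}_2$ as an invariant distribution. The second-largest eigenvalue of $\tilde{\mtx{P}}_2$ equals $(1+\lambda(\mtx{P}))/2 = 1 - \eta(\mtx{P})/2$, so $\eta(\tilde{\mtx{P}}_2) = \eta(\mtx{P})/2 > 0$; and since no eigenvalue of $\tilde{\mtx{P}}_2$ is negative, the absolute spectral gap coincides with the spectral gap.

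I do not expect a serious obstacle: unlike the analysis of $\mtx{P}_2$ in Theorem \ref{thm:eigenvalue of P_2}, here the reversibility of $\tilde{\mtx{P}}_2$ (already established in Lemma \ref{reversibility of symmetric path space}) and the explicit identity $\tilde{\mtx{T}}\tilde{\mtx{S}} = (\mtx{P}+\mtx{\Id})/2$ together do essentially all the work, bypassing the more delicate symmetrization arguments involving $\mtx{K}$ and $\mtx{L}$. The only mildly fiddly point is handling the diagonal classes $\widetilde{(u,u)}$ correctly in the computation of $\tilde{\mtx{T}}\tilde{\mtx{S}}$, which amounts to treating the coefficient $1$ (rather than $1/2$) in $\tilde{\mtx{S}}$ at diagonal elements; this is routine.
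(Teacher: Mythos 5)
Your proposal is correct and follows essentially the same route as the paper's proof: factor $\tilde{\mtx{P}}_2=\tilde{\mtx{S}}\tilde{\mtx{T}}$, verify $\tilde{\mtx{T}}\tilde{\mtx{S}}=(\mtx{P}+\mtx{\Id})/2$, invoke the shared nonzero spectrum of $\mtx{A}\mtx{B}$ and $\mtx{B}\mtx{A}$, and use reversibility plus nonnegativity of the eigenvalues to identify $\eta_a(\tilde{\mtx{P}}_2)$ with $\eta(\tilde{\mtx{P}}_2)=\eta(\mtx{P})/2$. Your diagonal bookkeeping $p(u,u)+\tfrac{1}{2}(1-p(u,u))=\tfrac{1}{2}(1+p(u,u))$ is in fact the correct value (the paper's displayed ``$1+\tfrac{1}{2}p(u,u)$'' is a typo for $\tfrac{1}{2}+\tfrac{1}{2}p(u,u)$).
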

\begin{proof}
    Similar to the proof for $\mtx{P}_2$, we decompose $\tilde{\mtx{P}}_2$ as the product of two transition matrices which represent step \ref{construction of tildeP2:step 1} and step \ref{construction of tildeP2:step 2} introduced in Subsection~\ref{subsubsec:construction of tildeP2}, respectively. To be specific, let $\tilde{\mtx{S}}$ denote the transition matrix of step \ref{construction of tildeP2:step 1} in which a given state $\tilde{\mtx{u}}\sim (u,v)$ will jump to $u$ or $v$ with equal probability $1/2$, i.e. for any $\tilde{\mtx{u}}\in \tilde{\Omega}^2$,$u\in \Omega$,
    \[\tilde{\mtx{S}}(\tilde{\mtx{u}},u)=
              \begin{cases} 1/2, &\tilde{\mtx{u}}\sim (u,v), \text{where } v\ne u, \\
              1, &\tilde{\mtx{u}}\sim  (u,u),\\
              0, &\text{otherwise}.\end{cases}\]
    Let $\tilde{\mtx{T}}$ denote the transition matrix of step \ref{construction of tildeP2:step 2} in which a given state $u$ will jump to the state $\tilde{\mtx{v}}\sim (u,v)$ according to the probability 
    \[\mathbb{P}\left\{\text{choose } v \right\}=p(u,v),\]
    i.e. for any $u\in \Omega$, $\tilde{\mtx{v}}\in \tilde{\Omega}^2$, 
    \[\tilde{\mtx{T}}(u,\tilde{\mtx{v}})=
    \begin{cases} p(u,v),&\tilde{\mtx{v}} \sim (u,v), v\in \Omega,  \\
              0, &\text{otherwise}.\end{cases}\]   
    Then we have $\tilde{\mtx{P}}_2=\tilde{\mtx{S}}\tilde{\mtx{T}}$. Similarly, for any $u,v\in \Omega$ such that $u\ne v$, we have
    \[\tilde{\mtx{T}}\tilde{\mtx{S}}(u,v)=\sum\limits_{\tilde{\mtx{u}}\in\tilde{\Omega}^2}\tilde{\mtx{T}}(u,\tilde{\mtx{u}})\tilde{\mtx{S}}(\tilde{\mtx{u}},v)=\frac{1}{2}p(u,v).\]
    On the other hand, direct calculation shows that
    \begin{align*}
        \tilde{\mtx{T}}\tilde{\mtx{S}}(u,u)&=\sum\limits_{\tilde{\mtx{u}}\in\tilde{\Omega}^2}\tilde{\mtx{T}}(u,\tilde{\mtx{u}})\tilde{\mtx{S}}(\tilde{\mtx{u}},u)=\sum\limits_{\tilde{\mtx{u}}\sim(u,v),v\in \Omega}\tilde{\mtx{T}}(u,\tilde{\mtx{u}})\tilde{\mtx{S}}(\tilde{\mtx{u}},u)\\
        &=\sum\limits_{v\ne u}\frac{1}{2}p(u,v)+p(u,u)=\frac{1}{2}+\frac{1}{2}p(u,u).
    \end{align*}
    Thus $\tilde{\mtx{T}}\tilde{\mtx{S}}=(\mtx{P}+\mtx{\Id})/2$, which implies that $(\mtx{P}+\mtx{\Id})/2$ and $\tilde{\mtx{P}}_2$ share the same nonzero eigenvalues with multiplicity accounted. As a direct corollary, by the irreducibility of $\mtx{P}$ we know 1 is a simple eigenvalue of $\mtx{P}$, and therefore 1 is also a simple eigenvalue of $\tilde{\mtx{P}}_2$, which establishes the uniqueness of the invariant distribution of $\tilde{\mtx{P}}_2$.
  
    Furthermore, by the reversibility of $\tilde{\mtx{P}}_2$ and the fact that all eigenvalues of $\tilde{\mtx{P}}_2$ are nonnegative, we deduce
    \[\lambda_a\left(\tilde{\mtx{P}}_2\right)= \lambda\left(\tilde{\mtx{P}}_2\right)= \lambda\left(\frac{\mtx{P}+\mtx{\Id}}{2}\right)=\frac{1+\lambda(\mtx{P})}{2},\]
    which implies
    \[\eta_a\left(\tilde{\mtx{P}}_2\right)= \eta\left(\tilde{\mtx{P}}_2\right)=1-\frac{1+\lambda(\mtx{P})}{2}= \frac{\eta(\mtx{P})}{2}.\]
    This completes the proof.
\end{proof}

\section{Non-asymptotic error bounds}\label{sec:error bounds}
With the spectral properties of $\mtx{P}_2$ and $\tilde{\mtx{P}}_2$ in hand, we can now prove the main results of this paper by establishing non-asymptotic error bounds for MLE and SCE, respectively.
\subsection{Tail bounds of MLE and SCE}\label{subsec:tail bound}
We will first derive tail bounds for the error of MLE and SCE using concentration inequalities for Markov chains.  Let us state the complete version of \cite[Theorem 2.4]{huang2026bernstein}, which is a Bernstein-type inequality for Markov sums that only requires a nonzero IP gap of the transition matrix.

\begin{theorem}\label{thm:scalar concentration}
    Let $\{u_k\}_{k\geq 1}$ be a Markov chain on $\Omega$ generated by a transition kernel $\mtx{A}$ with an invariant distribution $\mu$ and an IP gap $\eta_p(\mtx{A})>0$, and let $g:\Omega\rightarrow \R$ be a scalar function that satisfies $\mathbb{E}_{\mu}[g]=0$, $\mathbb{E}_{\mu}[g^2]\leq \sigma^2$, and $|g|\leq M$. It holds that for any $t>0$,
    \[\mathbb{P}\left\{\left|\frac{1}{n}\sum\limits_{j=1}^n g(u_j)\right|\geq t\right\}\leq 2\|\nu/\mu\|_{\infty}\exp\left(\frac{-n\,\eta_p(\mtx{A})\, t^2}{4M\sqrt{\left(2+6\eta_p(\mtx{A})\right)^2\sigma^2+t^2}}\right),\]
    where $\nu$ is the initial distribution of $u_1$.
\end{theorem}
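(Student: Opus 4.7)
The plan is to prove this Bernstein-type inequality by combining a change-of-measure reduction, a Poisson-equation martingale decomposition, and a tilted-operator moment generating function estimate driven by the iterated Poincaré inequality.

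First, I would reduce to the stationary case by a Radon--Nikodym comparison: for any event $A$ measurable with respect to $(u_1,\dots,u_{n+1})$, we have $\mathbb{P}_\nu(A) \leq \|\nu/\mu\|_\infty\, \mathbb{P}_\mu(A)$, which accounts for the prefactor $\|\nu/\mu\|_\infty$ and reduces the problem to the stationary chain $u_1 \sim \mu$.

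Second, since $\mathbb{E}_\mu[g]=0$ and $\eta_p(\mtx{A})>0$, the operator $\mtx{\Id}-\mtx{A}$ is invertible on the mean-zero subspace $L_{2,\mu}^0$ with $\|(\mtx{\Id}-\mtx{A})^{-1}\|_\mu \leq \eta_p(\mtx{A})^{-1}$, so I would solve the Poisson equation $(\mtx{\Id}-\mtx{A})h=g$, obtaining $h\in L_{2,\mu}^0$ with $\|h\|_\mu \leq \sigma/\eta_p(\mtx{A})$. Telescoping the identity $g(u_j)=h(u_j)-(\mtx{A}h)(u_j)$ then gives
\begin{equation}
    \sum_{j=1}^n g(u_j) = h(u_1)-h(u_{n+1}) + \sum_{j=1}^n D_j, \qquad D_j := h(u_{j+1})-(\mtx{A}h)(u_j),
\end{equation}
where $(D_j)$ is a martingale-difference sequence whose predictable quadratic variation averages to at most $n\|h\|_\mu^2 \leq n\sigma^2/\eta_p(\mtx{A})^2$, and whose boundary term $h(u_1)-h(u_{n+1})$ can be controlled separately via a truncation or Markov-type argument.

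Third, to extract the exact Bennett shape $\exp(-c\,n\eta_p(\mtx{A})\,t^2/\sqrt{\sigma^2+t^2})$, I would work directly with the tilted operator $\mtx{C}_\theta f := \mtx{A}(e^{\theta g}f)$, noting that $\mathbb{E}_\mu[e^{\theta S_n}] = \mu^{\mathrm{T}}\mtx{D}_{e^{\theta g}}\mtx{C}_\theta^{n-1}\mtx{1}$. Expanding $\mtx{C}_\theta = \mtx{A} + \theta\,\mtx{A}\mtx{D}_g + O(\theta^2 M^2)$ and applying a Kato-type perturbation analysis, the resolvent bound furnished by the IP gap on $L_{2,\mu}^0$ delivers $\log\mathbb{E}_\mu[e^{\theta S_n}] \leq n\psi(\theta)$ with $\psi(\theta)$ of Bennett shape, quadratic in $\theta\sigma$ for small $\theta$ and essentially linear in $\theta M$ for large $\theta$. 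Optimizing the Chernoff inequality $\mathbb{P}(S_n\geq nt)\leq \exp(-n\theta t + n\psi(\theta))$ over $\theta>0$ then produces the stated tail. The main obstacle is the perturbation analysis of the non-self-adjoint tilted operator $\mtx{C}_\theta$: the IP gap is only a singular-value lower bound on $\mtx{\Id}-\mtx{A}$ rather than a genuine spectral gap, so uniformly controlling the Perron eigenvalue of $\mtx{C}_\theta$ in $\theta$, while avoiding any appeal to an $L^\infty$ bound on the Poisson solution $h$, is the delicate technical core of the argument, and is precisely the estimate established in \cite{huang2024bernsteintypeinequalitiesmarkovchains}.
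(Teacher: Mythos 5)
You should first note that the paper itself does not prove this statement: Theorem~\ref{thm:scalar concentration} is imported verbatim as ``the complete version of [huang2024bernsteintypeinequalitiesmarkovchains, Theorem 2.4]'' and is used as a black box, so there is no in-paper proof to match your argument against. Judged on its own terms, your proposal correctly identifies the standard machinery (the change of measure $\mathbb{P}_\nu(A)\leq \|\nu/\mu\|_\infty\,\mathbb{P}_\mu(A)$ for the prefactor, then a Chernoff bound via the tilted operator $\mtx{C}_\theta$), and your closing sentence honestly locates the hard step in the cited reference --- which is exactly where this paper locates it too. In that sense your proposal is consistent with what the paper actually does.

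As a self-contained proof, however, there is a genuine gap, and your own step 2 illustrates it. The Poisson-equation decomposition $\sum_j g(u_j)=h(u_1)-h(u_{n+1})+\sum_j D_j$ is algebraically correct, and the IP gap does give $\|h\|_\mu\leq \sigma/\eta_p(\mtx{A})$, but this is only an $L_{2,\mu}$ bound: a Freedman/Bernstein inequality for the martingale part needs uniform (or at least exponential-moment) control of $D_j$, and the IP gap provides no $L^\infty$ bound on $h$. You acknowledge this and pivot to the tilted-operator route in step 3, but there the entire content --- showing that the resolvent bound $\|(\mtx{\Id}-\mtx{A})^{-1}\|_{\mu}\leq \eta_p(\mtx{A})^{-1}$ on $L_{2,\mu}^0$ (a smallest-singular-value bound for a non-normal operator, not a spectral gap) controls the growth of $\mu^{\mathrm{T}}\mtx{D}_{e^{\theta g}}\mtx{C}_\theta^{n-1}\mtx{1}$ uniformly in $n$, with the specific constants $4M$ and $(2+6\eta_p(\mtx{A}))^2$ --- is asserted rather than derived. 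Since the statement being proved carries these explicit constants, ``a Kato-type perturbation analysis delivers $\psi(\theta)$ of Bennett shape'' does not yet constitute a proof; it is a description of the theorem you are citing. So the proposal is an accurate roadmap to the external proof, not a replacement for it, and it should be presented (as the paper presents it) as a citation rather than as an argument.
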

In the above theorem, we use the symbol $\mtx{A}$ for an arbitrary Markov transition kernel, so that it will not be confused with the particular transition matrix $\mtx{P}$ in our context. Since $\eta_p(\mtx{A})\geq \eta_s(\mtx{A})\geq \eta_a(\mtx{A})$ and $\eta_p(\mtx{A})\geq \eta_{ps}(\mtx{A})/2$, this concentration inequality also holds if the transition admits a nonzero absolute spectral gap, a nonzero pseudo spectral gap, or a nonzero symmetric spectral gap. We then use Theorem \ref{thm:scalar concentration} to derive non-asymptotic tail bounds for MLE.
\begin{proof}[Proof of Theorem~\ref{thm:main results of natural counting}~(\ref{item:MLEmainresults convergence})(\ref{item:MLEmainresults tail bound})]
    Recall that in MLE, the corresponding matrix empirical mean $\mtx{R}_n$ can be reformulated as
    \[\mtx{R}_n=\frac{1}{n}\sum\limits_{k=1}^n\mtx{F}(\mtx{u}_k),\]
    where $\{\mtx{u}_k\}_{k\geq 1}$ is the Markov chain constructed in Subsection~\ref{subsubsec:construction of P2}, and $\mtx{F}=[\mathbb{1}_{(u,v)}]_{u,v\in \Omega}$ is the matrix-valued function whose $(u,v)$ entry is the indicator function $\mathbb{1}_{(u,v)}$ of the pair $(u,v)\in \Omega^2$. It is shown that 
    \[\E_{\mu_2}\left[\mtx{F}\right]=\mtx{D}_\mu\mtx{P}\]
    in Lemma \ref{lem:unbiased of F and tildeF}, and thus the almost surely convergence of $\mtx{R}_n$ is a direct corollary of the standard ergodic theorem for Markov chains.

    Next, for any $u,v\in \Omega$, we take $g=\mathbb{1}_{(u,v)}-\mu(u)p(u,v)$. Then it is easy to verify that $\mathbb{E}_{\mu_2}[g]=0$, $|g|\leq 1$, and $\Var_{\mu_2}[g]\leq \mu_2(u,v)=\mu(u)p(u,v)$. Recall that $\mtx{P}_2$ is the transition matrix of $\{\mtx{u}_k\}_{k\geq 1}$. We can therefore apply Theorem~\ref{thm:scalar concentration} with $\mtx{A}=\mtx{P}_2$, $M=1$, $\sigma^2\leq \mu_2(u,v)$ to deduce
    \[\mathbb{P}\left\{\left|r_n(u,v)-\mu(u)p(u,v)\right|\geq t\right\}\leq 2\|\nu_2/\mu_2\|_{\infty}\exp\left(\frac{-n\, \eta_p(\mtx{P}_2)\, t^2}{4\sqrt{\left(2+6\eta_p(\mtx{P}_2)\right)^2 \mu_2(u,v)+t^2}}\right),\]
    where $\nu_2$ is the distribution of $\mtx{u}_1$. Note that for any $\mtx{w}=(w_1,w_2)\in \Omega^2$, we have 
    \[\frac{\nu_2(\mtx{w})}{\mu_2(\mtx{w})}=\frac{\nu(w_1)p(w_1,w_2)}{\mu(w_1)p(w_1,w_2)}=\frac{\nu(w_1)}{\mu(w_1)} ,\]
    which implies $\|\nu_2/\mu_2\|_{\infty}=\|\nu/\mu\|_{\infty}$. Finally, we use the fact $\eta_p(\mtx{P}_2)\geq \eta_p(\mtx{P})/(1+\eta_p(\mtx{P}))$ to conclude the desired result.
\end{proof}
As for SCE, we choose to derive non-asymptotic tail bounds directly in the matrix form using matrix concentration inequalities. Recently,~\cite{neeman2024concentration} established Hoeffding and Bernstein-type matrix concentration inequalities (in $\ell_2$ operator norm) for sums of Markov dependent random matrices, under the assumption that the transition matrix admits a nonzero absolute spectral gap. We present a simplified version of~\cite[Theorem 2.2]{neeman2024concentration} as follows.
\begin{theorem}\label{thm:matrix concentration}
    Let $\{u_k\}_{k\geq 1}$ be a Markov chain on $\Omega$ generated by a transition kernel $\mtx{A}$ with an invariant distribution $\mu$ and an absolute spectral gap $\eta_a(\mtx{A})>0$, and let $\mtx{G}:\Omega\rightarrow \mathbb{H}_d$ be a matrix-valued function that satisfies $\mathbb{E}_{\mu}[\mtx{G}]=0$, $\|\mathbb{E}_{\mu}[\mtx{G}^2]\|\leq \sigma^2$, and $\|\mtx{G}\|\leq M$. It holds that for any $t>0$,
    \[\mathbb{P}\left\{\left\|\frac{1}{n}\sum\limits_{j=1}^n\mtx{G}(u_j)\right\|\geq t\right\}\leq 2\|\nu/\mu\|_{\infty}d^{2-\frac{\pi}{4}}\exp\left(\frac{-n\, \eta_a(\mtx{A})\,t^2}{32\pi^{-2} (2-\eta_a(\mtx{A}))\sigma^2+256\pi^{-3}Mt}\right),\]
    where $\nu$ is the initial distribution of $u_1$.
\end{theorem}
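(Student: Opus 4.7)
The plan is to follow the matrix Laplace transform approach used in the independent matrix Bernstein inequality, with the role of independence replaced by the absolute spectral gap of $\mtx{A}$. Let $\mtx{X}_n := \sum_{j=1}^n \mtx{G}(u_j)$. Since each $\mtx{G}(u_j)\in\mathbb{H}_d$, the two-sided tail is controlled by
\[
\Prob{\|\mtx{X}_n\|\geq nt} \leq e^{-\theta nt}\bigl(\E\,\trace\exp(\theta\mtx{X}_n)+\E\,\trace\exp(-\theta\mtx{X}_n)\bigr)\quad\text{for every }\theta>0,
\]
so the task reduces to a one-sided matrix MGF bound, applied to $\pm\mtx{G}$.

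For the MGF itself, the Markov dependence must substitute for the Lieb/Golden--Thompson factorization that works for independent sums. The natural substitute is an operator-theoretic bound: define a linear operator $\mathcal{M}_\theta$ on $L_{2,\mu}\otimes \mathbb{H}_d$ that combines a Markov step (action of $\mtx{A}$) with left and right multiplication by $\exp(\theta\mtx{G}(u)/2)$ at each state, symmetrized so $\mathcal{M}_\theta$ is self-adjoint for real $\theta$. A change of measure producing the factor $\|\nu/\mu\|_\infty$ and a telescoping of the Markov trajectory then yield
\[
\E\,\trace\exp(\theta\mtx{X}_n) \leq \|\nu/\mu\|_\infty\, d^{c}\,\|\mathcal{M}_\theta\|^{n}
\]
for a dimensional constant $c$. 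The nonstandard exponent $2-\pi/4$ announced in the theorem strongly suggests this trace bound is sharper than the crude $\trace(\cdot)\leq d\,\|\cdot\|$, most likely coming from a trigonometric integral representation of $\exp(\theta\mtx{G})$ combined with a noncommutative moment inequality.

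The central analytic step is bounding $\|\mathcal{M}_\theta\|$. The unperturbed operator $\mtx{A}\otimes\mtx{\Id}$ has norm equal to $1-\eta_a(\mtx{A})$ on the complement of constants. A Weyl-type perturbation in the small parameter $\theta$, combined with the matrix variance bound $\|\E_\mu[\mtx{G}^2]\|\leq\sigma^2$ and the uniform bound $\|\mtx{G}\|\leq M$, should yield an estimate of the form
\[
\|\mathcal{M}_\theta\| \leq 1 - \eta_a(\mtx{A}) + C_1\theta^2\sigma^2(2-\eta_a(\mtx{A})) + C_2\theta^3 M\sigma^2 + \cdots,
\]
and the $\pi$-dependent constants $32\pi^{-2}$ and $256\pi^{-3}$ should emerge from the same trigonometric identities that produce the dimensional exponent. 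Optimizing over $\theta\in(0,c/M)$ then gives the Bernstein-type denominator $\sigma^2 + Mt$.

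The main obstacle is precisely this spectral perturbation analysis of $\mathcal{M}_\theta$. In the scalar setting (Lezaud--Paulin) it reduces to a rank-one perturbation of a single eigenvalue of the transition operator; in the matrix setting the noncommutativity between $\mtx{A}$ and multiplication by $\exp(\theta \mtx{G})$ forces one to work on the tensor space $L_{2,\mu}\otimes\mathbb{H}_d$ and to use sharper tools, such as a noncommutative Khintchine-type bound or an explicit integral representation of $e^{\theta \mtx{G}}$, in order to recover the stated $\pi$-dependent constants. Reproducing those constants exactly, rather than just the correct qualitative dependence on $\eta_a(\mtx{A})$, $\sigma^2$, and $M$, is where I would expect to spend most of the effort.
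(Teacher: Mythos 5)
First, a point of context: the paper does not prove this statement at all --- it is quoted (in simplified form) from \cite[Theorem 2.2]{neeman2023concentration}, so there is no in-paper argument to compare against. Your proposal must therefore be judged as a standalone proof attempt, and as such it has genuine gaps. Your outline correctly identifies the general shape of the argument (matrix Laplace transform, reduction to a norm bound on a perturbed transfer operator acting on $L_{2,\mu}\otimes\mathbb{H}_d$, change of measure for the $\|\nu/\mu\|_\infty$ factor, optimization in $\theta$), but every load-bearing step is asserted rather than established: the operator $\mathcal{M}_\theta$ is never defined, the claimed bound $\|\mathcal{M}_\theta\|\leq 1-\eta_a(\mtx{A})+C_1\theta^2\sigma^2(2-\eta_a(\mtx{A}))+\cdots$ carries unspecified constants and a trailing ellipsis, and the dimensional factor $d^{2-\pi/4}$ is only conjectured to come from ``a trigonometric integral representation.'' You yourself concede that reproducing the constants is the open part --- but those constants \emph{are} the theorem; without them one has only a qualitative statement.

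The deeper issue is the step you describe as ``a telescoping of the Markov trajectory.'' The two-matrix Golden--Thompson inequality does not iterate: $\trace\,e^{A+B+C}\leq\trace\,e^Ae^Be^C$ is false in general, and this failure is precisely the obstruction that makes the Markov-dependent matrix Bernstein inequality hard. The route actually taken in \cite{neeman2023concentration} (building on the matrix expander Chernoff bound of Garg--Lee--Song--Srivastava) circumvents this via a \emph{multi-matrix} Golden--Thompson inequality proved by complex interpolation and a pinching argument over the unit circle; that is the single source of all the $\pi$-dependent quantities, including the exponent $2-\pi/4$ and the constants $32\pi^{-2}$ and $256\pi^{-3}$. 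Your sketch neither invokes this ingredient nor supplies a substitute, so the telescoping step as written would fail for $n\geq 3$. To complete the proof you would need either to import the multi-matrix Golden--Thompson inequality explicitly or to find an alternative factorization of $\E\,\trace\exp(\theta\mtx{X}_n)$ that is compatible with the Markov structure --- and then carry out the perturbation analysis of the resulting operator with explicit constants.
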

Since we have shown that $\eta_a(\tilde{\mtx{P}}_2)=\eta(\mtx{P})/2$, we can use Theorem~\ref{thm:matrix concentration} to establish non-asymptotic tail bounds for SCE.
\begin{proof}[Proof of Theorem~\ref{thm:main results of symmetric counting}~(\ref{item:SCEmainresults convergence})(\ref{item:SCEmainresults tail bound})]
    In SCE, the corresponding empirical mean $\mtx{H}_n$ is defined by
    \[\mtx{H}_n= \frac{1}{n}\sum\limits_{k=1}^n\tilde{\mtx{F}}(\tilde{\mtx{u}}_k),\]
    where $\{\tilde{\mtx{u}}_k\}_{k\geq 1}$ is a Markov chain generated by $\tilde{\mtx{P}}_2$, and $\tilde{\mtx{F}}:\tilde{\Omega}^2\rightarrow \mathbb{H}_{d}$ is the matrix-valued function on the symmetric path space $\tilde{\Omega}^2$ which satisfies
    \[\tilde{\mtx{F}}(\tilde{\mtx{w}})=\frac{1}{2}\left(\mtx{E}_{w_1w_2}+\mtx{E}_{w_2w_1}\right), \qquad \text{ if} \,\ \tilde{\mtx{w}}\sim (w_1,w_2).\]
    It is shown that 
    \[\E_{\tilde{\mu}_2}\left[\tilde{\mtx{F}}\right]=\mtx{D}_\mu\mtx{P}\]
    in Lemma \ref{lem:unbiased of F and tildeF}, and then the almost surely convergence of $\mtx{H}_n$ follows immediately.
 
    Next, we find that
    \[\left\|\tilde{\mtx{F}}-\mtx{D}_\mu\mtx{P}\right\|\leq \left\|\tilde{\mtx{F}}\right\|+\left\|\mtx{D}_\mu\mtx{P}\right\|\leq 2,\]
    and
    \[\left\|\mathbb{E}_{\tilde{\mu}_2}\left[\left(\tilde{\mtx{F}}-\mtx{D}_\mu\mtx{P}\right)^2\right]\right\|= \left\|\mathbb{E}_{\tilde{\mu}_2}\left[\tilde{\mtx{F}}^2\right]-\left(\mtx{D}_\mu\mtx{P}\right)^2\right\|\leq \left\|\mathbb{E}_{\tilde{\mu}_2}\left[\tilde{\mtx{F}}^2\right]\right\|+\left\|\mtx{D}_\mu\mtx{P}\right\|^2\leq 2.\]
    We then apply Theorem \ref{thm:matrix concentration} with $\mtx{A}=\tilde{\mtx{P}}_2$, $\mtx{G}=\tilde{\mtx{F}}-\mtx{D}_\mu\mtx{P}$, $M=\sigma^2=2$ to deduce that
    \[\mathbb{P}\left\{\left\|\mtx{H}_n-\mtx{D}_{\mu}\mtx{P}\right\|\geq t\right\}\leq 2\|\tilde{\nu}_2/\tilde{\mu}_2\|_{\infty}d^{2-\frac{\pi}{4}}\exp\left(\frac{-n\,\eta_a(\tilde{\mtx{P}}_2)\,t^2}{64\pi^{-2} (2-\eta_a(\tilde{\mtx{P}}_2))+512\pi^{-3}t}\right),\]
    where $\tilde{\nu}_2$ is the distribution of $\tilde{\mtx{u}}_1$. According to the generation of $\tilde{\mtx{u}}_1$, for any $\tilde{\mtx{w}}\sim(w_1,w_2)$, we can compute that
    \[\frac{\tilde{\nu}_2\left(\tilde{\mtx{w}}\right)}{\tilde{\mu}_2\left(\tilde{\mtx{w}}\right)}=\frac{\nu(w_1)p(w_1,w_2)+\nu(w_2)p(w_2,w_1)}{\mu(w_1)p(w_1,w_2)+\mu(w_2)p(w_2,w_1)}=\frac{1}{2}\left(\frac{\nu(w_1)}{\mu(w_1)}+\frac{\nu(w_2)}{\mu(w_2)}\right).\]
    Therefore, we have $\|\tilde{\nu}_2/\tilde{\mu}_2\|_{\infty}=\|\nu/\mu\|_{\infty}$. Finally, we use the fact that $\eta_a(\mtx{\tilde{P}_2})=\eta(\mtx{P})/2$ to conclude the proof.
\end{proof}

\subsection{Dimension-free expectation bound}
In this subsection, we derive dimension-free expectation bounds for the errors of MLE and SCE and complete the proofs of Theorem~\ref{thm:main results of natural counting} and Theorem~\ref{thm:main results of symmetric counting}.

\begin{proof}[Proof of Theorem~\ref{thm:main results of natural counting} (\ref{item:MLEmainresults MSE})]
    First, let $\nu$ be the initial distribution of $u_1$, and let $\nu_2$ be the initial distribution induced by $\nu$. Notice that
    \begin{align*}
        \mathbb{E}\left[\left\|\mtx{R}_n-\mtx{D}_{\mu}\mtx{P}\right\|_{\mathrm{F}}^2 \middle| u_1\sim \nu\right]&= \mathbb{E}\left[\left\|\mtx{R}_n-\mtx{D}_{\mu}\mtx{P}\right\|_{\mathrm{F}}^2 \middle| \mtx{u}_1\sim \nu_2\right]\\   &\leq \|\nu_2/\mu_2\|_{\infty}\mathbb{E}\left[\left\|\mtx{R}_n-\mtx{D}_{\mu}\mtx{P}\right\|_{\mathrm{F}}^2 \middle| \mtx{u}_1\sim \mu_2\right]\\ &  = \|\nu/\mu\|_{\infty}\mathbb{E}\left[\left\|\mtx{R}_n-\mtx{D}_{\mu}\mtx{P}\right\|_{\mathrm{F}}^2 \middle| \mtx{u}_1\sim \mu_2\right].
    \end{align*}
    Therefore, we only need to bound $\mathbb{E}[\|\mtx{R}_n-\mtx{D}_{\mu}\mtx{P}\|_{\mathrm{F}}^2]$ under the condition that $\mtx{u}_1\sim \mu_2$. For any $\mtx{v}\in \Omega^2$, let $\bar{\mathbb{1}}_{\mtx{v}}$ denote the projection of the indicator function $\mathbb{1}_{\mtx{v}}$ onto the mean-zero subspace $L_{2,\mu_2}^0$, i.e.
    \[\bar{\mathbb{1}}_{\mtx{v}}(\mtx{w})=\mathbb{1}_{\mtx{v}}(\mtx{w})-\mu_2(\mtx{v}),\quad \text{for any}\,\ \mtx{w}\in \Omega^2.\]
    We then use the definition of $\mtx{R}_n$ to compute that
    \begin{align*}
        \mathbb{E}\left[\left\|\mtx{R}_n-\mtx{D}_{\mu}\mtx{P}\right\|_{\mathrm{F}}^2\right]
        &=\mathbb{E}\left[\sum\limits_{\mtx{v}\in\Omega^2}\left(\frac{1}{n}\sum\limits_{k=1}^n \bar{\mathbb{1}}_{\mtx{v}}\left(\mtx{u}_k\right)\right)^2\middle|\mtx{u}_1\sim \mu_2\right]\\
        &=\frac{1}{n^2}\sum\limits_{\mtx{v}\in\Omega^2}\sum\limits_{1\leq i,j\leq n}\mathbb{E}\left[\bar{\mathbb{1}}_{\mtx{v}}\left(\mtx{u}_i\right)\cdot\bar{\mathbb{1}}_{\mtx{v}}\left(\mtx{u}_j\right)\middle|\mtx{u}_1\sim \mu_2\right]\\
        &=\frac{1}{n^2}\sum\limits_{\mtx{v}\in\Omega^2}\left(n\left\|\bar{\mathbb{1}}_{\mtx{v}}\right\|^2_{\mu_2}+2\sum\limits_{1\leq i<j\leq n}\mathbb{E}\left[\bar{\mathbb{1}}_{\mtx{v}}\left(\mtx{u}_i\right)\cdot\mtx{P}_2^{j-i}\bar{\mathbb{1}}_{\mtx{v}}\left(\mtx{u}_i\right)\middle|\mtx{u}_i\sim \mu_2\right]\right)\\
        &=\frac{1}{n^2}\sum\limits_{\mtx{v}\in\Omega^2}\left(n\left\|\bar{\mathbb{1}}_{\mtx{v}}\right\|^2_{\mu_2}+2\sum\limits_{k=1}^{n-1}(n-k)\left\langle\bar{\mathbb{1}}_{\mtx{v}},\mtx{P}_2^k\bar{\mathbb{1}}_{\mtx{v}}\right\rangle_{\mu_2}\right)\\
        &= \frac{1}{n}\left(\sum\limits_{\mtx{v}\in \Omega^2}\left\|\bar{\mathbb{1}}_{\mtx{v}}\right\|^2_{\mu_2}\right)+\frac{2}{n^2}\sum\limits_{\mtx{v}\in \Omega^2}\sum\limits_{k=1}^{n-1}(n-k)\left\langle\bar{\mathbb{1}}_{\mtx{v}},\left(\mtx{P}_2^k-\mtx{\Pi}_2\right)\bar{\mathbb{1}}_{\mtx{v}}\right\rangle_{\mu_2}\\
        &\leq \left(\frac{1}{n}+\frac{2}{n^2}\left\|\sum\limits_{k=1}^{n-1}(n-k)\left(\mtx{P}_2^k-\mtx{\Pi}_2 \right) \right\|_{\mu_2}\right) \left(\sum\limits_{\mtx{v}\in \Omega^2}\left\|\bar{\mathbb{1}}_{\mtx{v}}\right\|^2_{\mu_2}\right),
    \end{align*}
    where $\mtx{\Pi}_2:=\mtx{1}_{d^2}\mu_2^{\mathrm{T}}$ is the projection operator onto $\mtx{1}_{d^2}$ under the $L_{2,\mu_2}$ inner product.
    Since
    \[\sum\limits_{\mtx{v}\in \Omega^2}\left\|\bar{\mathbb{1}}_{\mtx{v}}\right\|^2_{\mu_2}\leq\sum\limits_{\mtx{v}\in \Omega^2}\left\|\mathbb{1}_{\mtx{v}}\right\|^2_{\mu_2}=\sum\limits_{\mtx{v}\in \Omega^2} \mu_2(\mtx{v})=1,\]
    we only need to bound the $L_{2,\mu_2}$ operator norm of $\sum_{k=1}^{n-1}(n-k)(\mtx{P}_2^k-\mtx{\Pi}_2)$. Let $\mtx{S}$, $\mtx{T}$ be defined in Theorem~\ref{thm:eigenvalue of P_2} which satisfy $\mtx{P}_2=\mtx{S}\mtx{T}$, $\mtx{P}=\mtx{T}\mtx{S}$, and let $\mtx{S}_1=\mtx{D}_{\mu_2}^{1/2}{\mtx{S}}\mtx{D}_{\mu}^{-1/2}$, $\mtx{T}_1=\mtx{D}_{\mu}^{1/2}{\mtx{T}}\mtx{D}_{\mu_2}^{-1/2}$. One can verify that $\mtx{S}\mtx{1}_d=\mtx{1}_{d^2}$ and $\mu^{\mathrm{T}}\mtx{T}=\mu_2^{\mathrm{T}}$, and thus
    \[\mtx{\Pi}_2=\mtx{1}_{d^2}\mu_2^{\mathrm{T}}=\mtx{S}\mtx{1}_d\mu^{\mathrm{T}}\mtx{T}=\mtx{S}\mtx{\Pi}\mtx{T},\]
    where $\mtx{\Pi}$ is the projection onto $\mtx{1}_d$ defined in Subsection~\ref{subsec:Fundamentals}. One can verify that $\mtx{P}^k-\mtx{\Pi}=(\mtx{P}-\mtx{\Pi})^k=(\mtx{P}-\mtx{\Pi})^k(\mtx{\Id}-\mtx{\Pi}) $ holds for any $k\in \N_+$. As a result, for any integer $k\geq 2$, we have 
    \[\mtx{P}_2^k-\mtx{\Pi}_2=\left(\mtx{S}\mtx{T}\right)^k-\mtx{S}\mtx{\Pi}\mtx{T}=\mtx{S}\left(\left(\mtx{T}\mtx{S}\right)^{k-1}-\mtx{\Pi}\right)\mtx{T}=\mtx{S}\left(\mtx{P}-\mtx{\Pi}\right)^{k-1}(\mtx{\Id}-\mtx{\Pi})\mtx{T}.\]
    In particular, $\mtx{P}_2-\mtx{\Pi}_2=\mtx{S}(\mtx{\Id}-\mtx{\Pi})\mtx{T}$. For convenience, we define a polynomial 
    \[\phi(x):=\frac{2}{n^2}\sum\limits_{k=1}^{n-1}(n-k)x^{k-1}.\]
    Then
    \begin{align*}
        \frac{2}{n^2}\left\|\sum\limits_{k=1}^{n-1}(n-k)\left(\mtx{P}_2^k-\mtx{\Pi}_2 \right) \right\|_{\mu_2}&= \frac{2}{n^2}\left\|\mtx{S}\left((n-1)\mtx{\Id}+\sum\limits_{k=2}^{n-1}(n-k)\left(\mtx{P}-\mtx{\Pi}\right)^{k-1}\right)(\mtx{\Id}-\mtx{\Pi})\mtx{T}\right\|_{\mu_2}\\
        &= \left\| \mtx{S}\phi\left(\mtx{P}-\mtx{\Pi}\right)(\mtx{\Id}-\mtx{\Pi})\mtx{T} \right\|_{\mu_2}\\
        &= \left\| \mtx{S}_1 \mtx{D}_{\mu}^{1/2}\phi\left(\mtx{P}-\mtx{\Pi}\right)(\mtx{\Id}-\mtx{\Pi})\mtx{D}_{\mu}^{-1/2}\mtx{T}_1 \right\|\\
        &\leq \|\mtx{S}_1\|\|\mtx{T}_1\|\left\|\mtx{D}_{\mu}^{1/2}\phi\left(\mtx{P}-\mtx{\Pi}\right)(\mtx{\Id}-\mtx{\Pi})\mtx{D}_{\mu}^{-1/2}\right\|\\
        &=\left\|\phi\left(\mtx{P}-\mtx{\Pi}\right)(\mtx{\Id}-\mtx{\Pi})\right\|_{\mu},
    \end{align*}
    where the last equality is due to the fact that $\mtx{S}_1^{\mathrm{T}}\mtx{S}_1=\mtx{T}_1\mtx{T}_1^{\mathrm{T}}=\mtx{\Id}$ (Theorem \ref{thm:eigenvalue of P_2}). Our goal is to bound $\|\phi(\mtx{P}-\mtx{\Pi})(\mtx{\Id}-\mtx{\Pi})\|_{\mu}$. To achieve this, consider an arbitrary $h\in L_{2,\mu}$, and let $\bar{h}:=(\mtx{\Id}-\mtx{\Pi})h$. If $\bar{h} =\mtx{0}$, then $\phi(\mtx{P}-\mtx{\Pi})(\mtx{\Id}-\mtx{\Pi})h=\mtx{0}$. Otherwise,
    \[\frac{\left\|\phi(\mtx{P}-\mtx{\Pi})(\mtx{\Id}-\mtx{\Pi})h\right\|_{\mu}}{\left\|h\right\|_{\mu}}\leq  \frac{\left\|\phi(\mtx{P}-\mtx{\Pi})(\mtx{\Id}-\mtx{\Pi})h\right\|_{\mu}}{\left\|\bar{h}\right\|_{\mu}}=  \frac{\left\|\phi(\mtx{P}-\mtx{\Pi})\bar{h}\right\|_{\mu}}{\left\|\bar{h}\right\|_{\mu}}= \frac{\left\|\phi(\mtx{P})\bar{h}\right\|_{\mu}}{\left\|\bar{h}\right\|_{\mu}} .\]
    As a result,
    \[\left\|\phi(\mtx{P}-\mtx{\Pi})(\mtx{\Id}-\mtx{\Pi})\right\|_{\mu} =\sup\limits_{h\in L_{2,\mu},\  h\ne \mtx{0}}\frac{\left\|\phi(\mtx{P}-\mtx{\Pi})(\mtx{\Id}-\mtx{\Pi})h\right\|_{\mu}}{\left\|h\right\|_{\mu}}=\sup\limits_{h\in L_{2,\mu}^0,\ h\ne \mtx{0}}\frac{\left\|\phi(\mtx{P})h\right\|_{\mu}}{\left\|h\right\|_{\mu}}.\]
    By the irreducibility of $\mtx{P}$, we have $\operatorname{rank}(\mtx{\Id}-\mtx{P})=d-1$. We also know that for any $h\in L_{2,\mu}$, $\E_\mu[(\mtx{\Id}-\mtx{P})h]=0$, and that $(\mtx{\Id}-\mtx{P})\mtx{1}=0$. Thus, $\mtx{\Id}-\mtx{P}$ is a bijection from $L_{2,\mu}^0$ to $L_{2,\mu}^0$. Hence, for any $h\in L_{2,\mu}^0$, there exists a function $g\in L_{2,\mu}^0$ such that $(\mtx{\Id}-\mtx{P})g=h$. For such $g$, by the definition of the IP gap $\eta_p(\mtx{P})$, we have $\|g\|_{\mu}\leq \|h\|_{\mu}/\eta_p(\mtx{P})$. Therefore,
    \[\left\|\phi(\mtx{P})h\right\|_{\mu}=\left\|\phi(\mtx{P})(\mtx{\Id}-\mtx{P})g\right\|_{\mu}\leq \left\|\phi(\mtx{P})(\mtx{\Id}-\mtx{P})\right\|_{\mu} \left\|g\right\|_{\mu}\leq \frac{\left\|\phi(\mtx{P})(\mtx{\Id}-\mtx{P})\right\|_{\mu}}{\eta_p(\mtx{P})}\|h\|_{\mu}.\]
    To continue, notice that
    \[\left\|\phi(\mtx{P})(\mtx{\Id}-\mtx{P})\right\|_{\mu}=\frac{2}{n^2}\left\|(n-1)\mtx{\Id} -\sum\limits_{k=1}^{n-1}\mtx{P}^k  \right\|_{\mu}\leq \frac{2}{n^2}\left((n-1)+\sum\limits_{k=1}^{n-1}\left\|\mtx{P}^k\right\|_{\mu}\right)\leq\frac{4}{n}.\]
    We thus deduce
    \[\left\|\phi(\mtx{P})h\right\|_{\mu}\leq \frac{4}{n\eta_p(\mtx{P})}\left\|h\right\|_{\mu},\quad \text{for any}\,\ h\in L_{2,\mu}^0,\]
    which implies $\|\phi(\mtx{P}-\mtx{\Pi})(\mtx{\Id}-\mtx{\Pi})\|_{\mu}\leq 4/(n\eta_p(\mtx{P}))$. As a result,
    \begin{equation}\label{eqt:bound1}
        \mathbb{E}\left[\left\|\mtx{R}_n-\mtx{D}_{\mu}\mtx{P}\right\|_{\mathrm{F}}^2\right]\leq \left(\frac{1}{n}+\left\|\phi(\mtx{P}-\mtx{\Pi})(\mtx{\Id}-\mtx{\Pi})\right\|_{\mu}\right)\left(\sum\limits_{\mtx{v}\in \Omega^2}\left\|\bar{\mathbb{1}}_{\mtx{v}}\right\|^2_{\mu_2}\right)\leq \frac{1}{n}+\frac{4}{n\eta_p(\mtx{P})}.
    \end{equation}
    which is the claimed dimension-free expectation bound.

    If $\mtx{P}$ admits a nonzero absolute spectral gap $\eta_a(\mtx{P})=1-\lambda_a(\mtx{P})$, we can bound $\|\phi(\mtx{P}-\mtx{\Pi})\|_{\mu}$ alternatively using the fact that
    \begin{equation}\label{eqt:bound of g}
        \left|\phi(x)\right|=\frac{2}{n^2}\left|\sum\limits_{k=1}^{n-1}(n-k)x^{k-1}\right|=\frac{2}{n^2}\left(\frac{n}{1-x}+\frac{x^{n}-1}{(1-x)^2}\right)\leq \frac{2}{n(1-x)}, \quad \text{for any}\,\  x\in (-1,1).
    \end{equation}
    Since all coefficients of $\phi$ are nonnegative, we have
    \begin{equation}\label{eqt:spectral estimate for g}
        \left\|\phi(\mtx{P}-\mtx{\Pi})(\mtx{\Id}-\mtx{\Pi})\right\|_{\mu} \leq\left\| \phi\left(\mtx{P}-\mtx{\Pi}\right)\right\|_{\mu}\leq \phi\left( \left\|\mtx{P}-\mtx{\Pi}\right\|_{\mu}\right)=\phi\left(\lambda_a(\mtx{P})\right)\leq \frac{2}{n\eta_a(\mtx{P})}.
    \end{equation}
    It follows that,
    \begin{equation}\label{eqt:bound2}
        \mathbb{E}\left[\left\|\mtx{R}_n-\mtx{D}_{\mu}\mtx{P}\right\|_{\mathrm{F}}^2 \right]\leq \frac{2+\eta_a(\mtx{P})}{n\eta_a(\mtx{P})}.
    \end{equation}

    Moreover, when $\mtx{P}$ is reversible, $\mtx{P}-\mtx{\Pi}$ is self-adjoint under the $L_{2,\mu}$ inner product, and $\|\phi(\mtx{P}-\mtx{\Pi})\|_{\mu}$ is the largest singular value of the symmetric matrix $\phi(\mtx{D}_{\mu}^{1/2}(\mtx{P}-\mtx{\Pi})\mtx{D}_{\mu}^{-1/2})$. In this case,~\eqref{eqt:spectral estimate for g} can be improved to
    \begin{align*}
        \left\|\phi\left(\mtx{P}-\mtx{\Pi}\right)\right\|_{\mu}&=\left\|\phi\left(\mtx{D}_{\mu}^{1/2}(\mtx{P}-\mtx{\Pi})\mtx{D}_{\mu}^{-1/2}\right)\right\|\\ & =\sup\limits_{\lambda\in \operatorname{spec}\left(\mtx{P}-\mtx{\Pi}\right) }|\phi(\lambda)|\leq \sup\limits_{\lambda\in \operatorname{spec}\left(\mtx{P}-\mtx{\Pi}\right)}\frac{2}{n(1-\lambda)}= \frac{2}{n\eta(\mtx{P})},
    \end{align*}
    which implies 
    \[\mathbb{E}\left[\left\|\mtx{R}_n-\mtx{D}_{\mu}\mtx{P}\right\|_{\mathrm{F}}^2\right]\leq \frac{2+\eta(\mtx{P})}{n\eta(\mtx{P})}.\]
    This completes the proof of Theorem~\ref{thm:main results of natural counting}.
\end{proof}

\begin{remark}
    Whether the error depends on the dimension shall rely on the choice of the error metric. The reason we can obtain dimension free expectation bound is that we measure the error using the Frobenius norm, which is a natural choice for matrices. If one instead uses the TV norm to measure the error, a dependence on the dimension typically arises. This is why in \cite{wolfer2021statistical}, the error bound depends on the quantity $\|\mtx{D}_{\mu}\mtx{P}\|_{1/2}=(\sum_{u,v\in \Omega} (\mu(u)p(u,v))^{1/2})^2$, which may depend on the dimension $d$ and can be bounded by $d^2$.
    
    To better illustrate this point, consider estimating a discrete distribution on a state space of size $d$ by the empirical distribution of a large amount of i.i.d. samples. Then one can see that the $L_2$ error between the empirical distribution and the ground truth can be bounded independently of $d$. In contrast, the error in the TV norm scales like $d^{1/2}$ in the worst case.
\end{remark}

\begin{remark}
    It is worth noting that Theorem \ref{thm:main results of natural counting} can be extended to Markov chains on a countable state space, provided that the transition operator $\mtx{P}$ admits a nonzero IP gap $\eta_p(\mtx{P}) > 0$. 
    The only essential modification appears in the proof of Theorem \ref{thm:main results of natural counting}(\ref{item:MLEmainresults MSE}) where we need to show that the operator $\mtx{\Id} - \mtx{P}$ is invertible on $L_{2,\mu}^0$. In the finite-dimensional case, this is straightforward, since $\mtx{\Id} - \mtx{P}$ is one to one. In the countable (infinite-dimensional) case, we can prove this by contradiction. Suppose $\mtx{\Id} - \mtx{P}$ is not invertible on $L_{2,\mu}^0$. Then there exists a nonzero function $g \in L_{2,\mu}^0$ such that for all $h \in L_{2,\mu}^0$, $\la g, (\mtx{\Id}-\mtx{P})h\ra_{\mu}=0$. In particular, by taking $h=g$, we deduce that $\|g\|_{\mu}^2=\la g, \mtx{P}g\ra_{\mu}$. On the other hand, we have
    \[\la g, \mtx{P}g\ra_{\mu} \leq \|g\|_{\mu} \|\mtx{P}g\|_{\mu} \leq \|g\|_{\mu}^2,\] so equality must hold throughout, which implies $(\mtx{\Id}-\mtx{P}) g=0$. This contradicts the fact that $\eta_p(\mtx{P})>0$.
\end{remark}

\begin{remark}
    The estimate in \eqref{eqt:bound1} can be viewed as a matrix version of \cite[Theorem 1.2]{chatterjee2025spectral}.In fact, one can apply \cite[Theorem 1.2]{chatterjee2025spectral} to $\{\mtx{u}_k\}_{k\geq 1}$ to obtain that 
    \[\mathbb{E}\left[\left\|\mtx{R}_n-\mtx{D}_{\mu}\mtx{P}\right\|_{\mathrm{F}}^2\right]\lesssim \frac{1}{n\eta_p(\mtx{P}_2)}\lesssim  \frac{1}{n\eta_p(\mtx{P})}.\] The second inequality above owes to $\eta_p(\mtx{P}_2)\geq \eta_p(\mtx{P})/(1+\eta_p(\mtx{P}))$ in Theorem \ref{thm:eigenvalue of P_2}. In contrast, the proof we present above is more straightforward in the sense that it controls the expected error in the Frobenius norm directly by $1/(n\eta_p(\mtx{P}))$ without explicitly establishing the relation between $\eta_p(\mtx{P}_2) $ and $\eta_p(\mtx{P})$.
\end{remark}

\begin{remark}
    Compared with \eqref{eqt:bound1}, the error bound in \eqref{eqt:bound2} has a better constant. Therefore, \eqref{eqt:bound2} could be sharper than \eqref{eqt:bound1} when $\eta_a(\mtx{P})\geq \eta_p(\mtx{P})/2$. However, we cannot guarantee that $\mtx{P}$ admits a nonzero absolute spectral gap $\eta_a(\mtx{P})$. In contrast, $\eta_p(\mtx{P})>0$ holds for any irreducible transition matrix $\mtx{P}$. From this point of view, \eqref{eqt:bound1} is a more robust result.
\end{remark}

\begin{proof}[Proof of Theorem~\ref{thm:main results of symmetric counting}~(\ref{item:SCEmainresults MSE})]
    Let $\tilde{\nu}_2$ be the initial distribution induced by $\nu$. Since
    \begin{align*}
        \mathbb{E}\left[\left\|\mtx{H}_n-\mtx{D}_{\mu}\mtx{P}\right\|_{\mathrm{F}}^2\middle| \tilde{\mtx{u}}_1\sim {\tilde{\nu}_2}\right]&\leq 
        \|\tilde{\nu}_2/\tilde{\mu}_2\|_{\infty} \mathbb{E}\left[\left\|\mtx{H}_n-\mtx{D}_{\mu}\mtx{P}\right\|_{\mathrm{F}}^2\middle| \tilde{\mtx{u}}_1\sim {\tilde{\mu}_2}\right]\\ &= \|{\nu}/{\mu}\|_{\infty} \mathbb{E}\left[\left\|\mtx{H}_n-\mtx{D}_{\mu}\mtx{P}\right\|_{\mathrm{F}}^2\middle| \tilde{\mtx{u}}_1\sim {\tilde{\mu}_2}\right],
    \end{align*}
    it suffices to bound $\mathbb{E}[\|\mtx{H}_n-\mtx{D}_{\mu}\mtx{P}\|_{\mathrm{F}}^2]$ under the condition that $\tilde{\mtx{u}}_1\sim {\tilde{\mu}_2}$.
    For any $\tilde{\mtx{v}}\in \tilde{\Omega}^2$, we use $\bar{\mathbb{1}}_{\tilde{\mtx{v}}}$ to denote the projection of the indicator function $\mathbb{1}_{\tilde{\mtx{v}}}$ onto the mean-zero subspace $L_{2,\tilde{\mu}_2}^0$, i.e.
    \[\bar{\mathbb{1}}_{\tilde{\mtx{v}}}({\tilde{\mtx{w}}})=\mathbb{1}_{\tilde{\mtx{v}}}(\tilde{\mtx{w}})-\tilde{\mu}_2(\tilde{\mtx{v}}),\quad \text{for any}\,\ \tilde{\mtx{w}}\in \tilde{\Omega}^2.\]
    By the definition of $\mtx{H}_n=[h_n(u,v)]_{u,v\in \Omega}$, we have
    \[h_n(u,v)-\mu(u)p(u,v)=\begin{cases}
        \sum_{k=1}^n\bar{\mathbb{1}}_{\widetilde{(u,v)}}(\tilde{\mtx{u}}_k)/(2n)\quad &\text{if }\,\ u\ne v,\\
        \sum_{k=1}^n\bar{\mathbb{1}}_{\widetilde{(u,v)}}(\tilde{\mtx{u}}_k)/n\quad &\text{if }\,\ u=v.
    \end{cases}\]
    Next, we define a polynomial $\tilde{\phi}$ as
    \[\tilde{\phi}(x):= \frac{1}{n}+\frac{2}{n^2}\sum\limits_{k=1}^{n-1}(n-k)x^k,\]
    and then compute that
    \begin{align*}
        & \quad \,\ \mathbb{E}\left[\left\|\mtx{H}_n-\mtx{D}_{\mu}\mtx{P}\right\|_{\mathrm{F}}^2\right]\\
        &= \mathbb{E}\left[\sum\limits_{\substack{u,v\in \Omega\\u\ne v}} \left(\frac{1}{2n}\sum\limits_{k=1}^n \left({\bar{\mathbb{1}}_{\widetilde{(u,v)}}\left(\tilde{\mtx{u}}_k\right)}\right)\right)^2 +\sum\limits_{\substack{u\in \Omega}}\left(\frac{1}{n}\sum\limits_{k=1}^n \left({\bar{\mathbb{1}}_{\widetilde{(u,u)}}\left(\tilde{\mtx{u}}_k\right)}\right)\right)^2      \middle| \tilde{\mtx{u}}_1\sim {\tilde{\mu}_2}\right]    \\
        &\leq \mathbb{E}\left[ \sum\limits_{\tilde{\mtx{v}}\in\tilde{\Omega}^2}\left(\frac{1}{n}\sum\limits_{k=1}^n \left({\bar{\mathbb{1}}_{\tilde{\mtx{v}}}\left(\tilde{\mtx{u}}_k\right)}\right)\right)^2 \middle| \tilde{\mtx{u}}_1\sim {\tilde{\mu}_2}\right]\\
        &= \frac{1}{n^2}\sum\limits_{\tilde{\mtx{v}}\in\tilde{\Omega}^2}\sum\limits_{1\leq i,j\leq n}\mathbb{E}\left[\bar{\mathbb{1}}_{\tilde{\mtx{v}}}\left(\tilde{\mtx{u}}_i\right)\cdot\bar{\mathbb{1}}_{\tilde{\mtx{v}}}\left(\tilde{\mtx{u}}_j\right)\middle| \tilde{\mtx{u}}_1\sim {\tilde{\mu}_2}\right]\\
        &=\sum\limits_{\tilde{\mtx{v}}\in\tilde{\Omega}^2} \left(\frac{1}{n}\left\langle\bar{\mathbb{1}}_{\tilde{\mtx{v}}},\bar{\mathbb{1}}_{\tilde{\mtx{v}}}\right\rangle_{\tilde{\mu}_2}+\frac{2}{n^2} \sum\limits_{k=1}^{n-1} (n-k)\left\langle \left(\tilde{\mtx{P}}_2^k-\tilde{\mtx{\Pi}}_2\right)\bar{\mathbb{1}}_{\tilde{\mtx{v}}},\bar{\mathbb{1}}_{\tilde{\mtx{v}}}\right\rangle_{\tilde{\mu}_2}\right)\\
        &\leq \left\| \frac{\mtx{\Id}}{n}+\frac{2}{n^2}\sum\limits_{k=1}^{n-1} (n-k) \left(\tilde{\mtx{P}}_2^k-\tilde{\mtx{\Pi}}_2\right) \right\|_{\tilde{\mu}_2}\left(\sum\limits_{\tilde{\mtx{v}}\in\tilde{\Omega}^2} \left\| \bar{\mathbb{1}}_{\tilde{\mtx{v}}}\right\|_{\tilde{\mu}_2}^2 \right)\\
        &\leq \left\| \tilde{\phi} \left(\tilde{\mtx{P}}_2-\tilde{\mtx{\Pi}}_2\right) \right\|_{\tilde{\mu}_2},
    \end{align*}
    where $\tilde{\mtx{\Pi}}_2$ is the projection operator onto $\mtx{1}_{d(d+1)/2}$ under the $L_{2,\tilde{\mu}_2}$ inner product. For the last inequality, we use the facts that $\tilde{\mtx{P}}_2^k-\tilde{\mtx{\Pi}}_2=(\tilde{\mtx{P}}_2-\tilde{\mtx{\Pi}}_2)^k$ and $\sum_{\tilde{\mtx{v}}\in\tilde{\Omega}^2} \| \bar{\mathbb{1}}_{\tilde{\mtx{v}}}\|_{\tilde{\mu}_2}^2 \leq 1$. Notice that for any $x\in[0,1)$,
    \[\tilde{\phi}(x)=\frac{1}{n}+\frac{2}{n^2}\sum\limits_{k=1}^{n-1}(n-k) x^k=\frac{1}{n^2}\left(\frac{n(1+x)}{1-x}+\frac{2x^{n+1}-2x}{\left(1-x\right)^2}\right)\leq \frac{1+x}{n(1-x)}.\]
    Therefore,
     \begin{align*}
        \left\|\tilde{\phi} \left(\tilde{\mtx{P}}_2-\tilde{\mtx{\Pi}}_2\right) \right\|_{\tilde{\mu}_2}&\leq  \tilde{\phi} \left(\left\|\tilde{\mtx{P}}_2-\tilde{\mtx{\Pi}}_2\right\|_{\tilde{\mu}_2}\right)=\tilde{\phi}\left(\lambda_a\left(\tilde{\mtx{P}}_2\right)\right)\\
        &\leq \frac{1+\lambda_a\left(\tilde{\mtx{P}}_2\right)}{n\left(1-\lambda_a\left(\tilde{\mtx{P}}_2\right)\right)}=\frac{2-\eta_a\left(\tilde{\mtx{P}}_2\right)}{n\eta_a\left(\tilde{\mtx{P}}_2\right)}=\frac{4-\eta(\mtx{P})}{n\eta\left(\mtx{P}\right)},
    \end{align*}
    where the last equality is due to the fact that $\eta_a(\tilde{\mtx{P}}_2)=\eta(\mtx{P})/2$ (Theorem \ref{thm:eigenvalue of tilde P2}). As a result, we obtain
    \[\mathbb{E}\left[\left\|\mtx{H}_n-\mtx{D}_{\mu}\mtx{P}\right\|_{\mathrm{F}}^2\right]\leq \frac{4-\eta(\mtx{P})}{n\eta\left(\mtx{P}\right)},\]
    which completes the proof.
\end{proof}
\begin{remark}
    We can also use the tail bounds proved in Subsection~\ref{subsec:tail bound} to derive expectation bounds for SCE. In general, let $X$ be a random variable such that
    \[\mathbb{P}\left\{|X|\geq t\right\}\leq c_1\econst^{-c_2t^2}, \quad \text{for any}\,\ t>0,\]
    where $c_1\geq 1, c_2>0$ are two constants. Then $\mathbb{E}[X^2]$ can be bounded by integrating the tail bound:
    \begin{equation}\label{eqt:lem:expectation via tail bound}
        \mathbb{E}\left[X^2\right]=\int_{0}^{+\infty}\mathbb{P}\left\{|X|^2> t\right\}\diff t\leq \int_{0}^{+\infty}\min\left\{1,c_1\econst^{-c_2t}\right\}\diff t= \frac{\log\left(c_1\econst\right)}{c_2}.
    \end{equation}
    Thus we can deduce $\E[\|\mtx{H}_n-\mtx{D}_{\mu}\mtx{P}\|^2]=\mathcal{O}(\log(d)/(n\eta(\mtx{P})))$ from the tail bounds of SCE. However, from the proofs given in this subsection, we know the error expectations of the two methods are both independent of the dimension $d$.
\end{remark}
   
\section{Numerical results}\label{sec:numerical results} 
In this section, we present a numerical study to verify and visualize our theoretical results. In order to compare MLE and SCE, we take $\mtx{P}$ to be a reversible transition matrix. Besides, we adopt the following strategy to adjust the spectral gap of $\mtx{P}$: for $0<\eta<1$ and a reversible transition matrix $\mtx{P}$ with an invariant distribution $\mu$, we take
\[\mtx{P}_{\eta}=\begin{cases} \frac{\eta}{1-\lambda_2}\mtx{P}+\left(1-\frac{\eta}{1-\lambda_2}\right)\mtx{\Id},&\eta\leq 1-\lambda_2,\\\frac{1-\eta}{\lambda_2}\mtx{P}+\left(1-\frac{1-\eta}{\lambda_2}\right)\mtx{1}\mu^{\mathrm{T}},&\eta> 1-\lambda_2,
    \end{cases}\]
where $\lambda_2$ is the second largest eigenvalue of $\mtx{P}$. Then it is not hard to see that $\mtx{P}_{\eta}$ is a reversible transition matrix with a spectral gap $\eta$. For a given transition matrix $\mtx{P}$, we run MLE and SCE $N$ times to obtain $\|\mtx{H}_n^{(k)}-\mtx{D}_{\mu}\mtx{P}\|_{\mathrm{F}}^2$ and $\|\mtx{R}_n^{(k)}-\mtx{D}_{\mu}\mtx{P}\|_{\mathrm{F}}^2$, $k=1,2,\dots N$, and then average over $k$ to get the mean square error (MSE).
Figure~\ref{fig:dependence on n} and Figure~\ref{fig:dependence on eta} respectively show the linear dependence of 1/MSE on $\eta$ and $n$, while Figure~\ref{fig:dependence on d} demonstrates that the MSEs of two methods are both independent of the size of the state space. We can also find out that $\text{MSE}_{\text{MLE}}/\text{MSE}_{\text{SCE}}\approx 1/2$ when the spectral gap of $\mtx{P}$ is small, which supports the theoretical results presented in Theorem~\ref{thm:main results of natural counting} and Theorem~\ref{thm:main results of symmetric counting}.

\begin{figure}[!ht]
    \centering
        \begin{subfigure}[b]{0.32\textwidth}
            \includegraphics[width=1\textwidth]{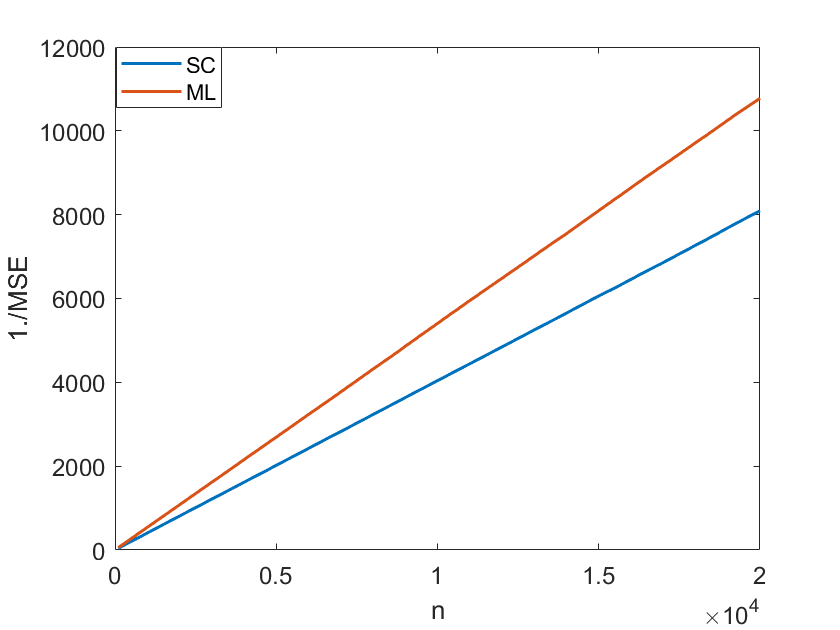}
            \caption{$\eta(\mtx{P})=0.5$}
        \end{subfigure}
        \begin{subfigure}[b]{0.32\textwidth}
            \includegraphics[width=1\textwidth]{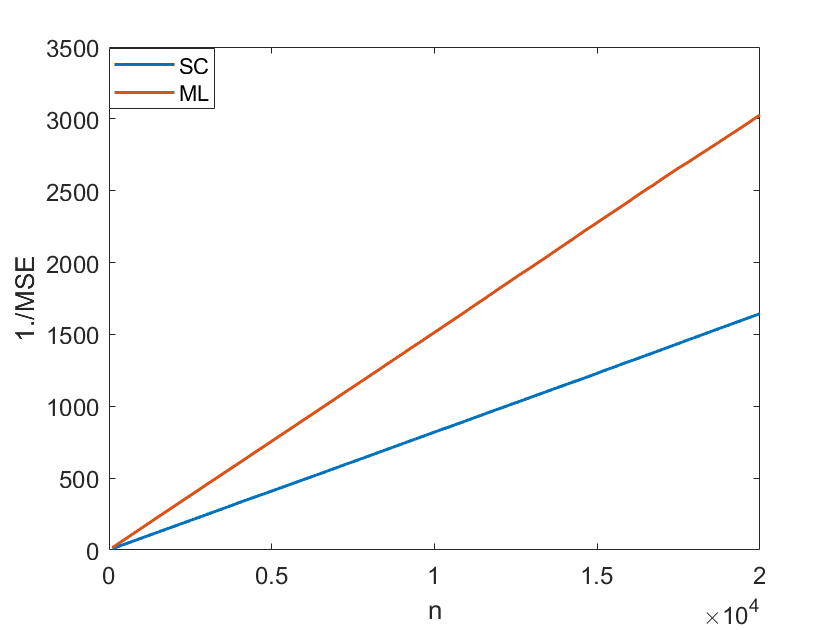}
            \caption{$\eta(\mtx{P})=0.2$}
        \end{subfigure}
        \begin{subfigure}[b]{0.32\textwidth}
            \includegraphics[width=1\textwidth]{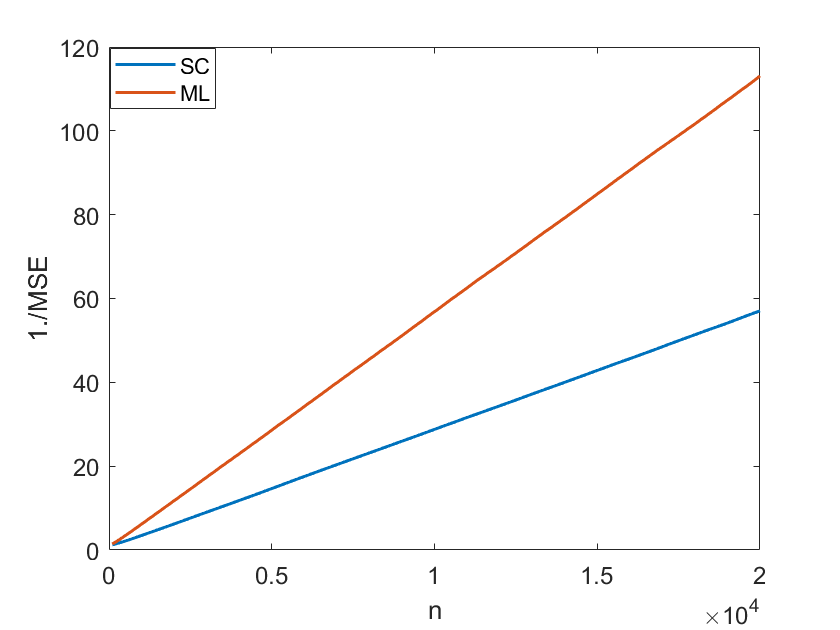}
            \caption{$\eta(\mtx{P})=0.01$}
        \end{subfigure}
        \caption{Plots of $1/\text{MSE}$ as the function of $n$ for $\eta(\mtx{P})=0.5,0.2,0.01$, respectively. We fix $N=10000$ and $d=50$.}
        \label{fig:dependence on n}
    \end{figure}

\begin{figure}[!ht]
    \centering
        \begin{subfigure}[b]{0.32\textwidth}
            \includegraphics[width=1\textwidth]{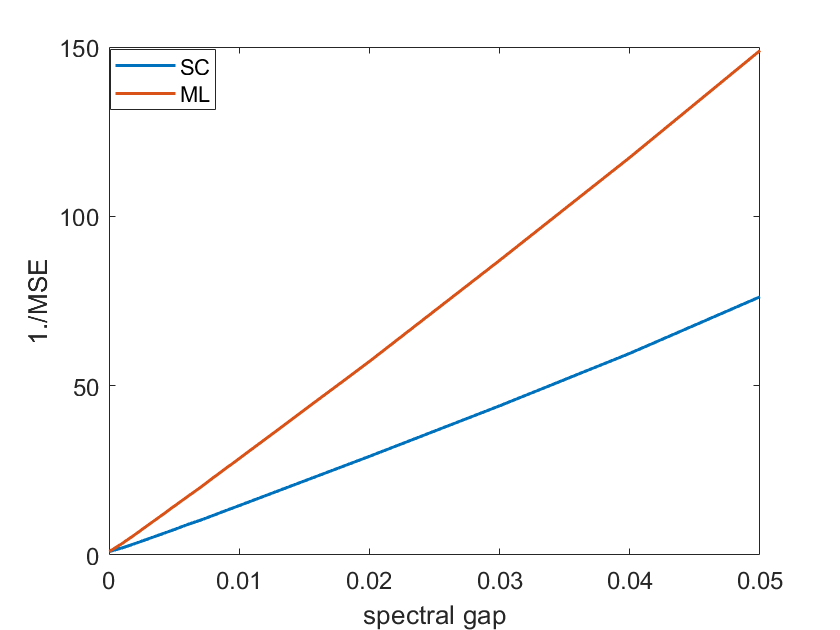}
            \caption{$n=5000$}
        \end{subfigure}
        \begin{subfigure}[b]{0.32\textwidth}
            \includegraphics[width=1\textwidth]{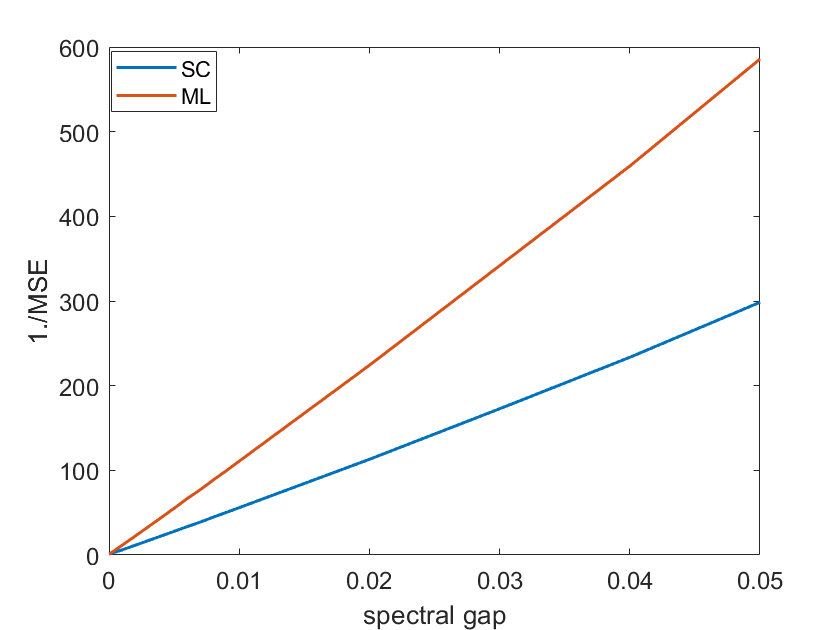}
            \caption{$n=20000$}
        \end{subfigure}
        \begin{subfigure}[b]{0.32\textwidth}
            \includegraphics[width=1\textwidth]{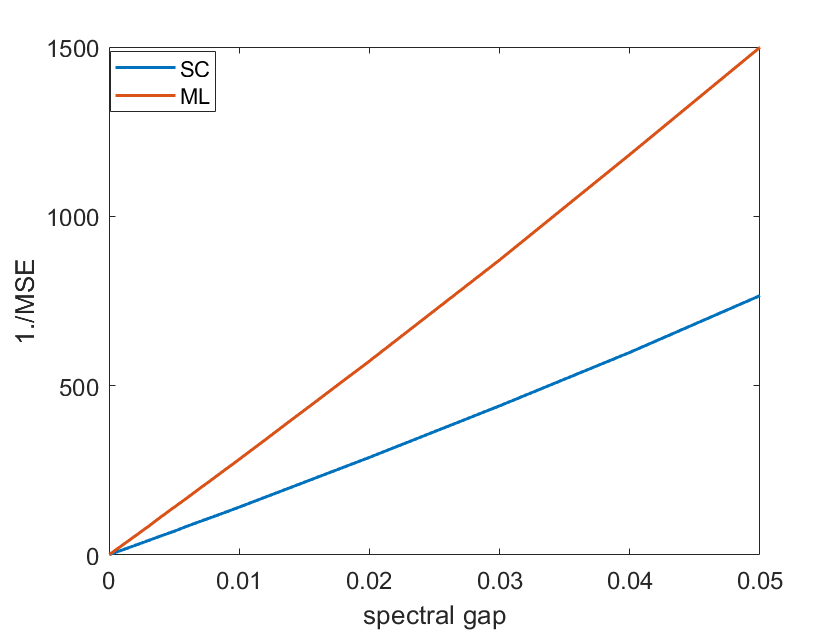}
            \caption{$n=50000$}
        \end{subfigure}
        \caption{Plots of $1/\text{MSE}$ as the function of $\eta(\mtx{P})$ for $n=5000,20000,50000$, respectively. We fix $N=10000$ and $d=50$.}
        \label{fig:dependence on eta}
    \end{figure}

    \begin{figure}[!ht]
        \centering
        \includegraphics[width=0.5\textwidth]{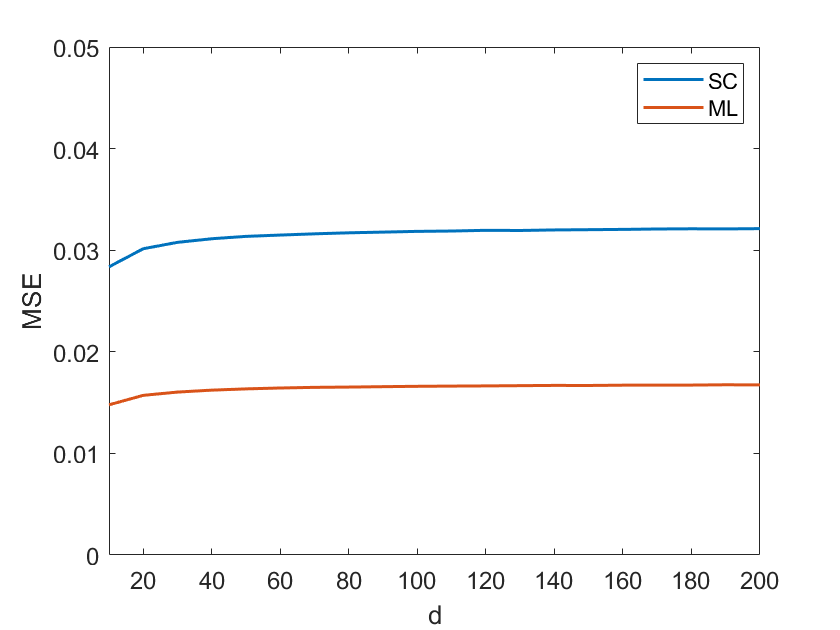}
        \caption{Plot of MSE as the function of $d$ for $N=5000$, $n=1000$. For each $d\in\left\{10,20,\ldots, 200\right\}$, we take the average of the MSEs of 100 different $d\times d$ transition matrices with fixed spectral gap $\eta=0.1$.}
        \label{fig:dependence on d}
        \end{figure}

\appendix
\section{Comparison of different spectral gaps}\label{sec:appendix}
 This section provides a brief comparison of different spectral gaps introduced in Section \ref{sec:main results}. Since all of these results already appear in \cite{chatterjee2025spectral,huang2026bernstein}, we present them here solely for the convenience of the reader. We first show that $\eta_p(\mtx{P})\geq \eta_s(\mtx{P})\geq \eta_a(\mtx{P})$ and $\eta_p(\mtx{P})\geq \eta_{ps}(\mtx{P})/2$. As a direct consequence, our main results also hold if $\mtx{P}$ admits a nonzero absolute spectral gap, a nonzero pseudo spectral gap, or a nonzero symmetric spectral gap.
\begin{lemma}\label{lem:comparison_of_spectral gap}
    It holds that $\eta_p(\mtx{P})\geq \eta_s(\mtx{P})\geq \eta_a(\mtx{P})$ and $\eta_p(\mtx{P})\geq \eta_{ps}(\mtx{P})/2$ for any irreducible transition matrix $\mtx{P}$ with an invariant distribution $\mu$. 
\end{lemma}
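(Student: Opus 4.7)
The plan is to verify the three comparisons in turn; the first two are one-line Cauchy--Schwarz arguments, and the third requires a telescoping computation.

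For $\eta_s(\mtx{P}) \geq \eta_a(\mtx{P})$, I apply Cauchy--Schwarz to get $\la h, \mtx{P}h\ra_\mu \leq \|h\|_\mu \|\mtx{P}h\|_\mu$ for every $h \in L_{2,\mu}^0$; dividing by $\|h\|_\mu^2$ and taking the supremum yields $\lambda_s(\mtx{P}) \leq \lambda_a(\mtx{P})$. For $\eta_p(\mtx{P}) \geq \eta_s(\mtx{P})$, Cauchy--Schwarz again gives
\[
\|(\mtx{\Id}-\mtx{P})h\|_\mu \,\|h\|_\mu \;\geq\; \la (\mtx{\Id}-\mtx{P})h, h\ra_\mu \;=\; \|h\|_\mu^2 - \la h, \mtx{P}h\ra_\mu \;\geq\; (1-\lambda_s(\mtx{P}))\|h\|_\mu^2,
\]
and dividing by $\|h\|_\mu^2$ and taking the infimum on the left yields the claim.

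The main step is $\eta_p(\mtx{P}) \geq \eta_{ps}(\mtx{P})/2$. I would fix $k \in \mathbb{N}_+$ and any nonzero $h \in L_{2,\mu}^0$ and start from the telescoping identity
\[
\|h\|_\mu^2 - \|\mtx{P}^k h\|_\mu^2 \;=\; \sum_{j=0}^{k-1}\bigl(\|\mtx{P}^j h\|_\mu^2 - \|\mtx{P}^{j+1} h\|_\mu^2\bigr).
\]
Factoring each summand as a difference of squares, I would control its two factors by: (i) Jensen's inequality \eqref{lambda_a leq 1}, which gives $\|\mtx{P}^j h\|_\mu + \|\mtx{P}^{j+1} h\|_\mu \leq 2\|h\|_\mu$; and (ii) the reverse triangle inequality combined with the commutation $(\mtx{\Id}-\mtx{P})\mtx{P}^j = \mtx{P}^j(\mtx{\Id}-\mtx{P})$ and Jensen again, yielding $\|\mtx{P}^j h\|_\mu - \|\mtx{P}^{j+1} h\|_\mu \leq \|\mtx{P}^j(\mtx{\Id}-\mtx{P})h\|_\mu \leq \|(\mtx{\Id}-\mtx{P})h\|_\mu$. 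Summing over $j$ produces the key estimate
\[
\|h\|_\mu^2 - \|\mtx{P}^k h\|_\mu^2 \;\leq\; 2k\,\|h\|_\mu\,\|(\mtx{\Id}-\mtx{P})h\|_\mu.
\]
Since $\|\mtx{P}^k h\|_\mu^2 \leq \lambda_a^2(\mtx{P}^k)\|h\|_\mu^2$ by definition of $\lambda_a(\mtx{P}^k)$, rearranging gives $\|(\mtx{\Id}-\mtx{P})h\|_\mu \geq \bigl(1-\lambda_a^2(\mtx{P}^k)\bigr)/(2k)\cdot \|h\|_\mu$ for every $k \in \mathbb{N}_+$. Taking the supremum over $k$ on the right and then the infimum over nonzero $h$ on the left yields $\eta_p(\mtx{P}) \geq \eta_{ps}(\mtx{P})/2$.

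The main obstacle is orienting the chain in step three correctly: the telescoping identity naturally bounds the $k$-step contraction amount from above in terms of single-step quantities, but one must make sure that after summation the single-step quantity is $\|(\mtx{\Id}-\mtx{P})h\|_\mu$ at the original $h$ rather than at some $\mtx{P}^j h$. This is precisely where the commutation of $\mtx{P}^j$ with $\mtx{\Id}-\mtx{P}$ and the $L_{2,\mu}$-non-expansiveness of $\mtx{P}^j$ are needed; once they are in place, the rest is bookkeeping.
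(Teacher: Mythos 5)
Your proposal is correct and follows essentially the same route as the paper: Cauchy--Schwarz for the first two inequalities, and a telescoping argument based on the commutation $(\mtx{\Id}-\mtx{P})\mtx{P}^j=\mtx{P}^j(\mtx{\Id}-\mtx{P})$ together with the $L_{2,\mu}$-non-expansiveness of $\mtx{P}$ for the third. The only cosmetic difference is that you telescope the squared norms $\|\mtx{P}^j h\|_{\mu}^2$ and absorb the factor $2$ via $\|\mtx{P}^j h\|_{\mu}+\|\mtx{P}^{j+1}h\|_{\mu}\leq 2\|h\|_{\mu}$, whereas the paper telescopes $\|(\mtx{\Id}-\mtx{P}^k)h\|_{\mu}$ directly and picks up the factor $2$ from $1-x^2\leq 2(1-x)$.
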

\begin{proof}
    For any $h\in L_{2,\mu}^0$,  by the Cauchy--Schwarz inequality,
    \[\frac{\la h,(\mtx{\Id}-\mtx{P})h\ra_{\mu}}{\|h\|_{\mu}^2}\leq \frac{\|(\mtx{\Id}-\mtx{P})h\|_{\mu}}{\|h\|_{\mu}}.\]
    Taking the infimum over $h \in L_{2,\mu}^0$, we deduce $\eta_p(\mtx{P})\geq \eta_s(\mtx{P})$. Similarly, since
    \[\frac{\langle h, \mtx{P}h \rangle_{\mu}}{ \|h\|_{\mu}^2}\leq \frac{\|\mtx{P}h\|_{\mu}}{\|h\|_{\mu}}\]
    holds for any $h \in L_{2,\mu}^0$, we deduce $\lambda_s(\mtx{P})\leq \lambda_a(\mtx{P})$, and therefore $\eta_s(\mtx{P})\geq \eta_a(\mtx{P})$.

    To prove $\eta_p(\mtx{P})\geq\eta_{ps}(\mtx{P})/2$, we first note that for any $k\in \N_+$, 
    \[\left\|\left(\mtx{P}^{k-1}-\mtx{P}^k\right)h\right\|_{\mu}=\left\|\mtx{P}^{k-1}(\mtx{\Id}-\mtx{P})h\right\|_{\mu}\leq \|(\mtx{\Id}-\mtx{P})h\|_{\mu}.\] Then, for any $h \in L_{2,\mu}^0$ and any $k\in \N_+$, we can compute by triangle inequality and telescoping that 
    \[ 1-\frac{\left\|\mtx{P}^k h\right\|_{\mu}}{\|h\|_{\mu}}\leq \frac{\left\|\left(\mtx{\Id}-\mtx{P}^k\right) h\right\|_{\mu}}{\|h\|_{\mu}}\leq \frac{1}{\|h\|_{\mu}} \sum\limits_{i=1}^k \left\|\left(\mtx{P}^{i-1}-\mtx{P}^i\right)h \right\|_{\mu}\leq k \frac{\|(\mtx{\Id}-\mtx{P})h\|_{\mu}}{\|h\|_{\mu}}.\]
    It follows by taking the infimum over all $h\in L_{2,\mu}^0$ that $1-\lambda_a(\mtx{P}^k)\leq k\eta_p(\mtx{P})$, and thus
    \[\eta_{ps}(\mtx{P})=\sup_{k\in \N_+}\frac{1-\lambda_a^2\left(\mtx{P}^k\right)}{k}\leq \sup_{k\in \N_+}\frac{2\left(1-\lambda_a\left(\mtx{P}^k\right)\right)}{k}\leq 2\eta_p(\mtx{P}).\]
    We have used above the fact that $\lambda_a(\mtx{P}^k)\leq 1$ \eqref{lambda_a leq 1}.
\end{proof}

Recall that the irreducible transition matrix $\mtx{P}_2$ constructed in Section \ref{sec:proofs} is already an example whose IP gap is positive while whose absolute spectral gap is zero. Below, we provide two more examples demonstrating the superiority of the IP gap.
\begin{example}  
    Chatterjee provided in \cite{chatterjee2025spectral} an example demonstrating that the ratio $\eta_a({\mtx{P}})/\eta_p(\mtx{P})$ can be arbitrarily small. Consider the one-dimensional random walk on the periodic grid $\Z/n\Z$ where the
    probability of not moving and that of moving one step to the right are both $1/2$. It is then not hard to check that the IP gap of this Markov chain is of order $1/n$, while the symmetric spectral gap and the absolute spectral gap are both of order $1/n^2$.
\end{example}
\begin{example}[Perturbation of a periodic transition matrix]
    Consider a parameterized family of transition matrices, 
    \[\mtx{P}_0=\begin{bmatrix}
                0& 1 &0  \\
                0& 0 &1 \\
                1& 0 &0  \\
        \end{bmatrix}, \quad \text{and }\quad \mtx{P}_{\varepsilon}=\varepsilon \mtx{\Id}+(1-\varepsilon)\mtx{P}_0, \quad \varepsilon\in [0,1].\]
   Suppose $\varepsilon \ll 1$. One can easily show by a straightforward calculation and the spectral perturbation theory that $\eta_p(\mtx{P}_{\varepsilon})=(1-\varepsilon)\sqrt{3}$, $\eta_s(\mtx{P}_{\varepsilon})=3(1-\varepsilon)/2$, $\eta_{ps}(\mtx{P}_{\varepsilon})=\mathcal{O}(\varepsilon)$, $\eta_{a}(\mtx{P}_{\varepsilon})=\mathcal{O}(\varepsilon)$. In particular, $\eta_p(\mtx{P}_0)=\sqrt{3}$, $\eta_s(\mtx{P}_0)=3/2$ and $\eta_a(\mtx{P}_0)=\eta_{ps}(\mtx{P}_0)=0$.
\end{example}

\subsection*{Acknowledgements.} The authors are supported by the National Key R\&D Program of China under the grant 2021YFA1001500.\\

% % \noindent\textbf{Conflict of interest.} The authors have no relevant financial or non-financial interests to disclose. The authors have no competing interests to declare that are relevant to the content of this article.\\

% % \noindent\textbf{Data availability.} The authors declare that data sharing is not applicable to this article as no datasets were generated or analysed during the current study.\\

\bibliographystyle{myalpha}
\bibliography{reference}

\end{document}